\def\expandafter\UrlBreaks\expandafter{\UrlBreaks
	\do\a\do\b\do\c\do\d\do\e\do\f\do\g\do\h\do\i\do\j%
	\do\k\do\l\do\m\do\n\do\o\do\p\do\q\do\r\do\s\do\t%
	\do\u\do\v\do\w\do\x\do\y\do\z\do\A\do\B\do\C\do\D%
	\do\E\do\F\do\G\do\H\do\I\do\J\do\K\do\L\do\M\do\N%
	\do\O\do\P\do\Q\do\R\do\S\do\T\do\U\do\V\do\W\do\X%
	\do\Y\do\Z}
\definecolor{DarkGreen}{rgb}{0.0, 0.5, 0.0}
\definecolor{MidnightBlue}{rgb}{0,0.44,0.57}
\definecolor{TableHighlight}{rgb}{0.9,0.9,0.9}
\definecolor{TableHeader}{rgb}{0.8,0.8,0.8}
\newcommand\N{{\mathbb N}}
\newcommand\Q{{\mathbb Q}}
\newcommand\Z{{\mathbb Z}} 
\newcommand\R{{\mathbb R}}
\newcommand\T{{\mathbb T}}
\newcommand{\ma}{\begin{pmatrix} }      \newcommand{\trix}{\end{pmatrix}}
\newcommand{\sma}{\left(\begin{smallmatrix} }       \newcommand{\strix}{\end{smallmatrix}\right)}
\newcommand{\inner}[1]{\left\langle \, #1  \,\right\rangle}
\newcommand{\0}{\mathbf{0}}
\DeclareMathOperator{\aff}{aff}
\DeclareMathOperator{\conv}{conv}
\DeclareMathOperator{\interior}{int}
\DeclareMathOperator*{\argmax}{arg\,max}
\DeclareMathOperator{\trop}{trop}
\DeclareMathOperator{\sgn}{sgn}
\newcommand{\data}{\mathcal D}
\DeclareMathOperator{\newt}{Newt} 
\newcommand{\decbound}{\mathcal B} 
\newcommand{\activationfan}[1][N]{\Sigma_\data( #1)} 
\newcommand{\activationpoly}[1][N]{P_\data( #1 )} 
\newcommand{\acone}[1][G]{C_\data( #1 )} 
\newcommand{\p}{\mathbf{p}} 
\newcommand{\parameter}{\theta} 
\newcommand{\s}{\mathbf{s}} 
\newcommand{\bt}{\mathbf{t}} 
\newcommand{\x}{\mathbf{x}} 
\newcommand{\e}{\mathbf{e}} 
\newcommand{\bv}{\mathbf{v}} 
\newcommand{\w}{\mathbf{w}}
\newcommand{\classificationfan}[1][]{\Sigma_\data^{#1}(n,m)}
\newcommand{\pclassificationfan}{\classificationfan[0]}
\DeclareMathOperator{\err}{err}
\newcommand{\pspace}[1][d,n,m]{\Theta( #1 )} 
\newcommand{\target}{C^*}
\newcommand{\tspace}[1][\target]{\Theta^{ #1 }_{\data}(d,n,m)}
\newcommand{\dplus}{\data_{+}^{\target}}
\newcommand{\dminus}{\data_{-}^{\target}}
\let\oldforall\forall
\let\forall\undefined
\DeclareMathOperator{\forall}{\ \oldforall}
\let\oldexists\exists
\let\exists\undefined
\DeclareMathOperator{\exists}{\ \oldexists}
\newtheorem{theorem}{Theorem}[section]
\newtheorem{lemma}[theorem]{Lemma}
\newtheorem{prop}[theorem]{Proposition}
\newtheorem{corollary}[theorem]{Corollary}
\theoremstyle{definition}
\newtheorem{definition}[theorem]{Definition}
\newtheorem{example}[theorem]{Example}
\newtheorem{remark}[theorem]{Remark}
\crefname{ineq}{inequality}{inequalities}
\crefname{prop}{Proposition}{Propositions}
\def\keywords{\xdef\@thefnmark{}\@footnotetext}
\def\mscclasses{\xdef\@thefnmark{}\@footnotetext}
\title{\sc{The Real Tropical Geometry \\ of Neural Networks}}
\author{Marie-Charlotte Brandenburg, Georg Loho, and Guido Montúfar 
}
\date{}
\begin{document}
\maketitle

\mscclasses{\hspace*{-2em} \textsc{MSC Classes:}
14T90, 
52C45, 
68T07 
(Primary); 
14P10, 
52C35 
(Secondary)
}
\keywords{\hspace*{-2em} \textsc{Keywords:} piecewise linear activation, oriented matroid, tropical rational function} 

\begin{abstract}
    We consider a binary classifier defined as the sign of a tropical rational function, that is, as the difference of two convex piecewise linear functions. 
    The parameter space of ReLU neural networks is contained as a semialgebraic set inside the parameter space of tropical rational functions. 
    We initiate the study of two different subdivisions of this parameter space: 
    a subdivision into semialgebraic sets, on which the combinatorial type of the decision boundary is fixed, and a subdivision into a polyhedral fan, 
    capturing the combinatorics of the partitions of the dataset. 
    The sublevel sets of the $0/1$-loss function arise as subfans of this classification fan, and we show that the level-sets are not necessarily connected. 
    We describe the classification fan i) geometrically, as normal fan of the activation polytope, and ii) combinatorially through a list of properties of associated bipartite graphs, in analogy to covector axioms of oriented matroids and tropical oriented matroids. 
    Our findings extend and refine the connection between neural networks and tropical geometry by observing structures 
    established in real tropical geometry, such as positive tropicalizations of hypersurfaces and tropical semialgebraic sets. 
\end{abstract} 

\section{Introduction}
We consider a \emph{binary classification task} with hypotheses given by signs of real-valued functions parameterized by artificial neural networks with piecewise linear activation functions. Given a classification task, we are interested in the sets of parameters for which the network perfectly classifies the training data, or for which it makes a certain number of errors, that is, the level sets of the 0/1\emph{-loss function}. 
We seek to understand the combinatorial and discrete-geometric structures underlying such a classification task using polyhedral methods.

The combinatorics of the functions represented by neural networks with piecewise linear activations has received significant attention over the years, in works such as \cite{pascanu2014number,
NIPS2014_109d2dd3,
pmlr-v49-telgarsky16,
pmlr-v70-raghu17a, 
pmlr-v80-serra18b,
NEURIPS2019_0801b20e}, or works listed in the 
overview article \cite{HuchetteMunozSerraTsay:2023}. 
A new trend in theoretical considerations of (deep) neural networks with piecewise linear activation functions is to study them through the lens of \emph{tropical geometry}, a mathematical framework which is tailored to understand the geometry of piecewise linear functions. 
The language of tropical geometry in the context of binary classification already appeared in \cite{charisopoulos17_morphologicalperceptronsgeometry}, and the relation to feedforward neural networks with ReLU activation functions was expanded independently in \cite{charisopoulos18_tropicalapproachneural,zhang18_tropicalgeometrydeep} and to maxout networks in \cite{doi:10.1137/21M1413699}. 
The connection arises through the fact that any function represented by such a neural network is continuous piecewise linear (and vice versa \cite{arora18_understandingdeepneural}) and can thus be written as the difference of two convex piecewise linear functions \cite{butner70_somerepresentationtheorems}. 
Such a difference is called a \emph{tropical rational function}. 
The representation as such a difference is not unique, and has been studied for instance in \cite{kripfganz87_piecewiseaffinefunctions,
Melzer1986,Schlueter,tran2023minimal}.

In recent years, there has been increased interest in \emph{real tropical geometry}, which also encompasses sign information and aims to study tropical varieties inside all orthants, e.g., \cite{jell20_realtropicalizationanalytification,RauRenaudineauShaw:2022}. 
This is the perspective that we take in the present article. Concretely, we consider
differences $g - h$ of two convex (and continuous) piecewise linear functions $g,h$, where we fix the maximum possible number of linear terms of each, $g$ and $h$. 
We thus encompass different neural network architectures simultaneously, and we illustrate how ReLU networks with fixed architecture can be described as semialgebraic sets inside the parameter space of tropical rational functions.

The simplest instance of this setup is a \emph{linear classifier}, which is the special case where $g$ and $h$ are linear functions. 
Here, results have been rediscovered in many different mathematical communities. 
The well-known work of Cover \cite{cover64_geometricalstatisticalproperties} studied the structure of the space of parameters of linear classifiers, 
which is subdivided by an arrangement of hyperplanes, and described the solution set of the classification task as a polyhedral cone. 
These structures have been expanded widely by the combinatorics community in the years since. 
One notable example is the counting formula for chambers in a general hyperplane arrangement by Zaslavsky \cite{Zaslavsky}, and the development of oriented matroids \cite{bjoerner_orientedmatroids}, which are combinatorial objects that capture the combinatorial structure of the data and their corresponding hyperplane arrangements.

More generally, the combinatorial structure of the parameter space refers to its subdivision into regions representing different dichotomies of the input data. Such subdivisions have been considered in several classic works studying the growth function and Vapnik-Chervonenkis dimension of binary classifiers; see \cite[Part I]{anthony_bartlett_1999} and references. 
These subdivisions naturally interact with the level sets of the training loss. The \emph{loss landscape} of neural networks has been studied in a series of works, obtaining results on the connectivity of sublevel sets or the existence of paths descending to a global minimum. 
For instance, under some conditions it is known that if the data is linearly separable then all local minima are global minima \cite{107014}. 
In general, it has been observed that local minima are not necessarily global minima \cite{31314,Sontag1989BackpropagationCG} and, depending on the loss function, the number of local minima may grow exponentially in the input dimension even for single neurons \cite{NIPS1995_3806734b}. 
For networks with piecewise linear activation, spurious local minima are common 
\cite{pmlr-v80-safran18a,9896818}. 
This can be rectified under a catalogue of assumptions. In particular, it can be shown that, given an appropriate level of overparametrization, most differentiable local minima are global minima \cite{soudry2018exponentially,karhadkar2023mildly}. 
Various works have studied how 
any two parameters can be connected through a continuous path in which the energy gap remains bounded \cite{freeman2017topology,NEURIPS2019_46a4378f,NEURIPS2021_af5baf59}. 
We also note a stream of works \cite{wang2022the,pmlr-v162-mishkin22a,mishkin2023optimal,pmlr-v139-ergen21a,NEURIPS2022_8b8fe72f} which considers the solution sets in terms of duality and the solution set for convex reformulations of the non-convex optimization problem. 
Further, the symmetries of neural network parametrization maps have been studied for instance in \cite{pmlr-v119-rolnick20a,pmlr-v202-grigsby23a} and also in relation to the optimization landscape in \cite{pmlr-v139-simsek21a}.

For classification with piecewise linear functions, not only the parameter space exhibits a (not necessarily polyhedral) subdivision, but also the input space exhibits a polyhedral 
subdivision, which is induced by the \emph{decision boundary}. 
To this day, understanding the decision boundary of deep neural networks remains a difficult task both in theory and in practice \cite{humayun23_splinecamexactvisualization}. 
For a ReLU neural network, it is known that the decision regions and the decision boundary are unbounded whenever the width of every layer is at most equal to the input dimension $d$ \cite{BEISE2021121,johnson2018deep,pmlr-v80-nguyen18b}, 
and if the network consists of a single hidden layer of width $d+1$ then the decision regions can have no more than one bounded connected component \cite{doi:10.1137/20M1368902}. 
The connection between decision boundaries and tropical geometry is straightforward through subcomplexes of tropical hypersurfaces \cite{zhang18_tropicalgeometrydeep,charisopoulos18_tropicalapproachneural,alfarra23_decisionboundariesneural,piwek23_exactcountboundary}. Notably, this subcomplex is a familiar object in \emph{real tropical geometry} as the (signed-)positive tropicalization of a hypersurface over the field of real or complex Puiseux series \cite{Viro:2006,speyer_tropicaltotallypositive,tropicalpositivitydeterminantal}. This connection to real tropical geometry goes beyond decision boundaries: In parameter space we observe that the set of solutions of a classification task can be formulated as a tropical semialgebraic set, a class of sets which has only recently started to enjoy systematic considerations \cite{allamigeon20_tropicalspectrahedra,jell20_realtropicalizationanalytification}.

\bigskip

\textbf{Contributions.} 
In this article, we consider the combinatorics of continuous piecewise linear classifiers. We consider a finite data set $\data \subseteq \R^d$ and classifiers defined as signs of functions $g-h\colon \R^d \to \R$, where $g$ and $h$ are convex {and continuous} piecewise linear functions with at most $n$ and $m$ linear pieces, respectively. 
We initiate the study of two different subdivisions of the parameter space $\pspace$ of such 
classifiers: 
The first 
is a subdivision of the parameter space into semialgebraic sets where the combinatorial type of the decision boundary is fixed. 
The second is a polyhedral subdivision, called the \emph{classification fan}. 
This is obtained from the \emph{activation fan}, 
a polyhedral fan which captures the \emph{activation patterns} of the tropical rational function. 
The different possible dichotomies of the data and the sublevel sets of the $0/1$-loss function arise as subfans of the classification fan. 
We describe the fan i) geometrically, as the normal fan of the \emph{activation polytope}, and ii) combinatorially through a list of properties of bipartite graphs associated with the activation patterns in analogy to covector axioms of oriented matroids. 
From this we can show that the sublevel sets of the $0/1$-loss function are connected for linear classifiers, but are disconnected for general piecewise linear classifiers. 
Furthermore, we show that the parameter space of a fixed architecture of ReLU neural networks is a semialgebraic set of bounded degree inside the parameter space of tropical rational functions, and thus intersects both subdivisions of $\pspace$ nontrivially. 

This article aims to address multiple audiences: A data-driven audience with a background in machine learning interested in understanding the underlying combinatorics, and a (discrete) geometric audience with an interest towards applications concerning neural networks. We try to accommodate for the different backgrounds throughout the exposition.

\bigskip

\textbf{Overview.}
In \Cref{sec:linear-classification} we begin by recalling known results and concepts describing linear classification. We introduce three different points of view to study natural subdivisions in parameter space: As chambers of a hyperplane arrangement $\mathcal H_\data$, as cones in the normal fan $\Sigma_\data$ of a polytope $P_\data$, and as maximal covectors of a realizable oriented matroid. 
The subsequent sections are devoted to generalizing this theory to classification by continuous piecewise linear functions. In \Cref{sec:piecewise-linear-functions} we describe the connection of continuous piecewise linear functions to ReLU neural networks and tropical geometry. While in the linear case the decision boundary is a hyperplane, in \Cref{sec:decision-boundaries} the decision boundary is 
piecewise linear. 
In \Cref{sec:activation-fan-polytope} we introduce the activation polytope and the activation fan, a polytope and its normal fan which are in analogy to the fan $\Sigma_\data$ and $P_\data$ from the linear case. 
Each cone is labeled by an activation pattern, and we relate the set of activation patterns to sets of covectors of oriented and tropical oriented matroids. In \Cref{sec:rational-classification} we consider subdivisions of the parameter space of continuous piecewise linear functions: We first introduce the classification fan (\Cref{sec:positive-negative-sectors}). Afterwards (\Cref{sec:space-of-dichotomies}) we describe an arrangement of indecision surfaces, which is the natural analogue of the hyperplane arrangement $\mathcal H_\data$, and the cells of this arrangement are compatible with the activation and classification fan. We show that the sublevel sets of the 0/1-loss can be viewed as subfans of the classification fan, resulting in a study of the perfect classification fan (\Cref{sec:perfect-classification}) and the (sub)-level sets of the $0/1$-loss function (\Cref{sec:sublevel-sets}). 

\bigskip 

\textbf{Acknowledgements.}
We thank Hanna Tseran for various valuable discussions.
We are grateful to Günter Rote for clarifying the proof of the composition property for \Cref{th:axiom-ap}, Arnau Padrol for pointing us to relevant literature on $k$-sets and $k$-facets, and Stefano Mereta for valuable input on the formulation of \Cref{th:relu-semialg-embedding}. 
This project has been supported by DFG grant 464109215 within the priority programme SPP 2298 ``Theoretical Foundations of Deep Learning".

\section{Linear Classifiers} \label{sec:linear-classification}

We motivate our studies from classical results about linear classification problems. Many results presented in this section have been rediscovered in different mathematical communities, using the language of classifiers, hyperplane arrangements or realizable oriented matroids. 
These different viewpoints serve as an inspiration for what follows, as in the upcoming sections we will generalize this to a theory for classifiers defined by neural networks with ReLU activation and tropical rational functions. 

Let $\data = \{\p_1,\dots,\p_M\} \subset \R^d$ be a finite set of data points. A \emph{linear binary classifier} or \emph{simple perceptron} is a function $f_\parameter\colon \R^d \to \{-1,0,1\}$, $f_\parameter(x) = \sgn(\inner{\s,x} + a)$, defined by taking the sign of a linear function with parameters $\parameter = (a,\s)$, where $ a \in \R, \s \in \R^d$. The parameter space is $\pspace[d] = \{\theta = (a,\s) \mid a \in \R, \s \in \R^d\} \cong \R^{d+1}$. 
The function $f_\parameter$ defines an affine hyperplane which separates the data points into three classes, $\{\p \in \data \mid f_\theta(\p) > 0 \}$, $\{\p \in \data \mid f_\theta(\p) < 0 \}$ and $\{\p \in \data \mid f_\theta(\p) = 0 \}$. The last of these sets is empty for generic choices of parameters. In this case, $f_\theta$ induces a \emph{dichotomy} $C \in \{+,-\}^M$ via $C_i = f_\theta(\p_i)$. 

We now discuss three different points of view on these dichotomies. We first state the equivalent viewpoints, and define the terms in this statement afterwards.

\goodbreak

\begin{theorem}
    Let $\data \subset \R^d$ be a finite data set. Then
    \begin{enumerate}[label={\textup{(}\roman*\textup{)}}]
        \item the hyperplane arrangement $\mathcal H_\data = \bigcup_{\p \in \data} (1,\p)^\perp$ subdivides the parameter space $\pspace[d]$ into regions according to the represented dichotomies, 
        \item $\mathcal H_D$ induces the normal fan of the zonotope $P_\data = \sum_{\p \in \data} \conv(\0,\p)$,
        \item the dichotomies are the maximal covectors of a realizable oriented matroid.
    \end{enumerate}
\end{theorem}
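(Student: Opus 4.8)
The three items are three dialects of one elementary observation, so the plan is to establish that observation once and then translate. For a parameter $\theta=(a,\s)\in\pspace[d]\cong\R^{d+1}$ and a point $\p_i\in\data$ we have $f_\theta(\p_i)=\sgn(\inner{\s,\p_i}+a)=\sgn\inner{\theta,(1,\p_i)}$, so the (partial) classification induced by $\theta$ is precisely the sign vector of $\theta$ against the vector configuration $V=\{(1,\p_1),\dots,(1,\p_M)\}\subseteq\R^{d+1}$, and a dichotomy is such a sign vector with no zero coordinate. Everything else is bookkeeping on top of this identity.

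For (i), I would note that $\theta\mapsto\sgn\inner{\theta,(1,\p_i)}$ is locally constant off the hyperplane $(1,\p_i)^\perp$, hence the full sign vector $(\sgn\inner{\theta,(1,\p_i)})_i$ is constant on each maximal region of the central arrangement $\mathcal H_\data=\bigcup_{\p\in\data}(1,\p)^\perp$. Conversely, a maximal region is full-dimensional, so it meets the complement of every $(1,\p)^\perp$ and realizes an honest dichotomy, and two parameters in distinct regions disagree in sign on some $(1,\p)$. This yields the bijection between maximal regions of $\mathcal H_\data$ and realized dichotomies, with the lower-dimensional faces recording the non-generic sign vectors (those with zero entries).

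For (ii), the plan is to combine two standard facts: the normal fan of a Minkowski sum is the common refinement of the normal fans of the summands, and the normal fan of the segment $\conv(\0,(1,\p))$ is the subdivision of $\R^{d+1}$ into the two closed halfspaces bounded by $(1,\p)^\perp$ (the functional $x$ is maximized over the segment at $\0$, at $(1,\p)$, or along the whole segment according to $\sgn\inner{x,(1,\p)}$). Refining over $\p\in\data$ produces exactly the fan whose relatively open cones are the faces of $\mathcal H_\data$, which is by definition the normal fan of the zonotope $P_\data$; here the generators $\p$ are read homogenized as $(1,\p)$ so that this fan lives in the parameter space $\pspace[d]$.

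For (iii), I would simply recall that any real vector configuration realizes an oriented matroid whose covectors are exactly the sign vectors $(\sgn\inner{x,v_i})_i$ for $x\in\R^{d+1}$, and whose maximal covectors (topes) are those of full support; applied to $V$ and combined with the opening identity, this identifies the topes with the realized dichotomies. The main obstacle is not conceptual but one of conventions: in (ii) one must pin down the normal fan of a zero-containing segment and reconcile the common-refinement description with the face structure of a \emph{central} arrangement, keeping track of the $\R^d$-versus-$\R^{d+1}$ reading of $P_\data$; and throughout one must restrict attention to generic parameters so that ``region'', ``tope'' and ``dichotomy'' all refer to full-support objects. Once the conventions are fixed, each of (i)--(iii) is a one-line dictionary entry.
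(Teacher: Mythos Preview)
Your proposal is correct and mirrors the paper's treatment: the paper does not give a formal proof of this theorem but instead presents it as classical background, following the statement with exactly the definitions and explanations you outline (the sign-vector labeling of cones, the zonotope as a Minkowski sum of segments with normal fan given by $\mathcal H_\data$, and the covector axioms of a realizable oriented matroid). You have correctly identified and handled the $\R^d$-versus-$\R^{d+1}$ ambiguity in the zonotope description, which the paper resolves in the text after the theorem by writing $P_\data=\sum_{\p\in\data}\conv((0,\0),(1,\p))$.
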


For $\p \in \R^d$ we consider $(1,\p)^\perp = \{(a,\s) \in \pspace[d] \mid \inner{\sma a \\ \s \strix, \sma 1 \\ \p \strix} = 0\}$ as a hyperplane through the origin.  A \emph{linear hyperplane arrangement} $\mathcal H \subset \R^{d+1}$ is a collection of hyperplanes through the origin, and it subdivides the ambient space into \emph{chambers}, which are the connected components of $\R^{d+1} \setminus \mathcal H$ and are open polyhedral cones. The arrangement thus uniquely induces a polyhedral fan $\Sigma$, whose maximal cones are the Euclidean closures of the chambers of $\mathcal H$. A \emph{wall} of $\Sigma$ is an intersection of maximal cones that has codimension $1$. 
Given any polytope $P \subset \R^{d+1}$, the (outer) normal cone of a face $F$ of $P$ is 
\[
    N_F(P) = \{y \in (\R^{d+1})^* \mid \inner{z,y} = \max_{x \in P} \inner{x,y} \text{ for all } z \in F \},
\]
and the \emph{normal fan} of $P$ is the collection of normal cones over all faces of $P$. A polytope is a \emph{zonotope} if its normal fan is induced by a hyperplane arrangement $\mathcal H$. Equivalently, a zonotope is the Minkowski sum of a collection of line segments, i.e.\ $P = \sum_{i =1}^M L_i =  \{x_1 + \dots + x_M \mid x_i \in L_i \text{ for $i=1,\ldots, M$}\}$ for line segments $L_1,\dots,L_M \subset \R^{d+1}$. 

The hyperplane arrangement $\mathcal H_\data = \bigcup_{i=1}^M (1,\p_i)^\perp \subset \pspace[d]$ uniquely induces a polyhedral fan $\Sigma_\data \subset \pspace[d]$, which is the normal fan of the zonotope $P_\data = \sum_{\p \in \data} \conv((0,\0),(1,\p)).$ Here, $\0 = (0,\dots,0)$ denotes the zero vector in $\R^d$. 
We can label each cone $\sigma$ of the fan $\Sigma_\data$ by a vector of signs, 
\[
C_\sigma = \left( \, \sgn(\inner{\theta,\sma 1 \\ \p_1 \strix}),\dots,\sgn(\inner{\theta,\sma 1 \\ \p_M \strix}) \, \right) = (f_\theta(\p_1),\dots,f_\theta(\p_M))\in \{-,0,+\}^M , 
\]
where we can choose any parameter vector $\theta$ contained in the relative interior of $\sigma$. We call $C_\sigma$ the \emph{(signed) covector} of $\sigma$.
The interiors of maximal cones of $\Sigma_\data$ consist of parameters which define a strict separation of the data, and are labeled with covectors $C \in \{-,+\}^M$ without zero entries (i.e.\ dichotomies). On the other hand, lower-dimensional cones are non-strict, i.e.\ there exist some data points $\p_i$ which lie on the separating hyperplane, and $C_i = 0$ for these data points. 
Duality between polytopes and normal fans implies that the vertices of $P_\data$ are in bijection with the dichotomies that the simple perceptron can compute. 

As all covectors in this article are signed covectors, we omit the word ``signed'' throughout this article. 
The \emph{maximal covectors} (or \emph{topes}) are the covectors with inclusion-maximal support, i.e.\ the dichotomies. 
We will use the notation $[M]=\{1,\ldots, M\}$. 
For two covectors $C,D$, we define the \emph{separation set} as $S(C,D) = \{i \in [M] \mid C_i = - D_i \neq 0\}$.
The set of all signed covectors forms an oriented matroid. 
More formally, an \emph{oriented matroid} is a pair $([M],\mathcal C)$, where $\mathcal C$ is a 
{collection of elements of} $\{-,0,+\}^{[M]}$ (called signed covectors) satisfying the following axioms \cite[Section 6.2.1]{goodman_handbookdiscretecomputational}:
\begin{enumerate}[label={(C \Roman*)},leftmargin=1.7cm]
\label{axioms:om}
    \item \textup{(}Zero\textup{)} $(0,\dots,0) \in \mathcal C$; \label{axiom:om:first}
    \item \textup{(}Symmetry\textup{)} $C \in \mathcal C \implies -C \in \mathcal C$;
    \item \vspace{-0.6em}\textup{(}Composition\textup{)} if 
    {$C,D \in \mathcal C$} 
    then $(C \circ D) \in \mathcal C$,  where 
    $(C \circ D)_i = \begin{cases}
        C_i & \text{if } C_i \neq 0, \\
        D_i & \text{otherwise};
    \end{cases}$ \label{axiom1}
    \item \textup{(}Elimination\textup{)} if 
    {$C,D \in \mathcal C$} 
    and $i \in S(C,D)$ then there exists some $Z \in \mathcal C$ such that $Z_i = 0$ and $Z_j = (C \circ D)_j \forall j \in [M] \setminus S(C,D)$.\label{axiom2} \label{axiom:om:last}
\end{enumerate}
Any collection of covectors that arises through a hyperplane arrangement is called a \emph{realizable oriented matroid}.
To each oriented matroid, one can associate a dual oriented matroid; for realizable oriented matroids this captures the geometry of the vector configuration given by the normal vectors of the hyperplanes, i.e.\ the dataset $\data$. 

In a classification task, we are typically interested in a strict separation of the data, represented by a dichotomy. 
We now fix a target dichotomy $\target \in \{-,+\}^M$, 
which divides the data into two sets $\dplus = \{\p_i \in \data \mid \target_i = +\},\dminus = \data \setminus \dplus$. The target dichotomy $\target$ is a covector in the oriented matroid if and only if there exists a hyperplane separating the data into $\dplus,\dminus$, i.e.\ the data is \emph{linearly separable} according to $\target$. 
Equivalently, by Farkas' Lemma, the covector exists if and only if $\conv(\dplus) \cap \conv(\dminus) = \emptyset$. 
For any parameter vector $\theta$, the \emph{$0/1$-loss-function} $\err_{\target}$ counts the number of mistakes, i.e.\ 
\[
    \err_{\target}
    (\theta) = | \{ i \in [M] \mid \sgn(f_\theta(\p_i)) = - \target_i \} |.
\]
Since the $0/1$-loss function is constant along 
{chambers} of $\mathcal H_\data$ and 
{(relative interiors of)} cones of $\Sigma_\data$, we allow ourselves to write $\err_{\target}(\sigma)$ for chambers or cones $\sigma \in \Sigma_\data$. 
In the notation of separation sets given above, if $D$ is the covector associated to $\sigma$ then $\err_{\target}(\sigma) = |S(\target,D)|$. 
Note that we choose to interpret data points which lie on the classifying hyperplane to be correctly classified. This technical distinction is irrelevant on interiors of maximal cones and allows us to consider polyhedral fans in \Cref{sec:rational-classification}. 

The \emph{$k^{\text{th}}$ level set} is the polyhedral subfan $\Sigma^k_\data = \{\sigma \in \Sigma \mid \err_{\target}(\sigma) = k\}$, and the \emph{sublevel set} is $\Sigma^{\leq k}_\data = \bigcup_{l = 0}^k \Sigma^k_\data$.
If the data is linearly separable according to $\target$, then the set $\Sigma^0_\data$ of parameters with $0$ error is a maximal cone of $\Sigma_\data$. On the other hand, if the data is not linearly separable, then the minimum error {on the interior of any maximal cone} will be larger than 0 and multiple {maximal} cones
may be minima of the $0/1$-loss.

In the remainder of this section, we study the connectivity of sublevel sets for linear classifiers. 
If one seeks to find a set of parameters $\theta \in \pspace[d]$ which separates the data, then in practice this can be achieved by minimizing a suitable loss function in the parameter space $\pspace[d]$ using an iterative optimization procedure. 
In this case the search variable is likely to move from chamber to chamber with transitions going through walls of codimension $1$. 
We would like to understand under which circumstances there exists a 
path along which the loss function is monotonically decreasing. 
The following statement is an adaptation of \cite[Proposition 4.2.3]{bjoerner_orientedmatroids} and we give a proof for completeness. 
Recall that for covectors $C,D$, the separation set is $S(C,D) = \{i\in[M] \mid C_i= -D_i\neq 0 \}$.

\begin{prop}\label{prop:chamber-walking}
    Let $C,D$ be two maximal covectors corresponding to maximal cones of~$\Sigma_\data$. 
    Then there exists a sequence of maximal covectors $D = D^0,D^1,\dots,D^k=C$ such that the cones corresponding to $D^i,D^{i+1}$ are connected through a wall of codimension $1$ for $i = 0,\dots,k-1$ and $S(C,D^{i})>S(C,D^{i+1})$. 
\end{prop}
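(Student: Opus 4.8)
The plan is to proceed by induction on the size of the separation set $|S(C,D)|$, constructing the path one wall-crossing at a time so that each step strictly decreases this set. If $S(C,D) = \emptyset$ then $C = D$ (both are maximal covectors, hence fully supported, so agreeing off their separation set means agreeing everywhere), and the empty sequence works. Otherwise, pick an index $i \in S(C,D)$. The key geometric idea is that the cone $\sigma_D$ corresponding to $D$ has a wall, and we want to find a neighbouring maximal cone $\sigma_{D^1}$ across a codimension-$1$ wall whose covector $D^1$ agrees with $D$ except that the sign in coordinate $i$ has been flipped (or possibly a few coordinates have been resolved toward $C$), so that $S(C,D^1) \subsetneq S(C,D)$.

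The main tool is the elimination axiom \ref{axiom2} of the oriented matroid. Applying elimination to $C$ and $D$ at the chosen index $i \in S(C,D)$ produces a covector $Z$ with $Z_i = 0$ and $Z_j = (C\circ D)_j$ for all $j \notin S(C,D)$; since $C$ and $D$ agree off $S(C,D)$, this means $Z_j = C_j = D_j$ for $j \notin S(C,D)$, so $Z$ has at most the entries of $S(C,D)\setminus\{i\}$ undetermined relative to $D$. The cone of $Z$ lies in the closure of $\sigma_D$ (it is obtained by moving to a wall where the $i$-th separating hyperplane is met). Now I would compose: $D^1 := C \circ Z$, or more carefully choose a covector on the other side of the wall containing $Z$. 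Concretely, $Z$ lies on a wall (or lower-dimensional face) of $\sigma_D$; walking across that wall flips exactly the sign $Z_i$ was zeroing out, and agrees with $D$ elsewhere on the complement of $S(C,D)$. Choosing the maximal covector $D^1$ on the far side that agrees with $C$ in coordinate $i$ gives $i \notin S(C,D^1)$ while $S(C,D^1) \subseteq S(C,D)$, hence $S(C,D^1) \subsetneq S(C,D)$. Then induction applied to the pair $(C, D^1)$ finishes the argument, prepending the step $D = D^0, D^1$ to the sequence.

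The subtle point — and the step I expect to be the main obstacle — is ensuring that the wall-crossing step really lands in an \emph{adjacent} maximal cone, i.e.\ that the face containing the eliminated covector $Z$ has codimension exactly $1$ and not more, and that the covector $D^1$ we cross to differs from $D$ in a controlled way. Generic realizability of $\Sigma_\data$ (the dataset is in general position, or we perturb it) guarantees that walls are codimension $1$, but one must be careful that the elimination covector $Z$ can be chosen on a wall rather than on a deeper face. The clean way to handle this is to phase the argument through the topes and covectors of the realizable oriented matroid directly and invoke the standard fact \cite[Proposition 4.2.3]{bjoerner_orientedmatroids} that any two topes are connected by a tope path whose successive separation sets with a fixed target tope are strictly decreasing; our statement is then a specialization of this with the geometric translation that adjacent topes correspond to maximal cones sharing a codimension-$1$ wall. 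I would spell out this translation and the induction explicitly, relegating the general-position remark to a footnote or a sentence, since the $0/1$-loss reinterpretation (each crossing reduces $\err_{\target}$ by the number of indices resolved) is exactly what makes the statement useful in the sequel.
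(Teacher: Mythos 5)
Your overall plan — induct on $|S(C,D)|$, apply the elimination axiom at an index $i \in S(C,D)$ to produce a covector $Z$ with $Z_i = 0$, and cross the corresponding wall — is exactly the paper's strategy, and you correctly flag that the crucial difficulty is making sure the resulting face is a genuine wall of codimension $1$. However, there are two concrete gaps (and one slip) that the paper's argument resolves and yours does not.

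First, the cone of $Z$ need not lie in the closure of $\sigma_D$. Elimination only pins down $Z_j = C_j = D_j$ for $j \notin S(C,D)$ and $Z_i = 0$; on $S(C,D)\setminus\{i\}$ the sign of $Z$ is unconstrained, so $Z$ may disagree with $D$ there. Consequently you cannot reach $Z$ from $D$ by a single wall-crossing. The paper's workaround is to pass through the two compositions $Z \circ C$ and $Z \circ D$: these are maximal covectors, they agree with $Z$ off its zero set $A$, they are opposite on $A$ (since $A \subseteq S(C,D)$), and each is strictly closer (in separation) to its target, so induction supplies the connecting subpaths $D \leadsto Z\circ D$ and $Z\circ C \leadsto C$, with the single crossing $Z\circ D \to Z\circ C$ in the middle. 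Relatedly, the composition you wrote, $C \circ Z$, is just $C$ (because $C$ is fully supported), so it does not give the desired neighbor; the correct object is $Z \circ C$.

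Second, and more importantly, your two proposed resolutions of the codimension issue both fall short of what the paper needs. Perturbing the data to general position is not admissible: it changes the oriented matroid and hence the statement being proved, which concerns the given $\data$ and its arrangement (which may well have coincident hyperplanes when data points repeat). And invoking \cite[Proposition~4.2.3]{bjoerner_orientedmatroids} wholesale is of course logically sound, but the point of the paragraph preceding the proposition is precisely to \emph{adapt} that argument and give a self-contained proof; a bare citation sidesteps the content. The paper closes the gap by choosing $Z$ among all eliminators at $i$ so that its zero set $A$ is inclusion-minimal, and then arguing that minimality forces all hyperplanes $(1,\p_j)^\perp$ with $j \in A$ to coincide; this is what guarantees that $Z$ labels a codimension-$1$ wall even when the arrangement is degenerate. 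That minimality argument is the missing ingredient in your proposal.
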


\begin{proof}
    First note that 
    the maximal cones are labelled by covectors without any zero entry. For two such covectors, we have $D \circ C = D$, and for any $i \in S(C,D)$, axiom \ref{axiom2} translates to the existence of a covector $Z$ such that $Z_i = 0$ and $Z_j = C_j$ for all $j \in [M] \setminus S(C,D)$.
    We prove the statement by induction on the size of $S(C,D)$. Since $C,D$ can be assumed not to have zero entries, this is the number of entries in which $C$ and $D$ differ.
    Suppose $S(C,D)=\{i\}$. By \ref{axiom2} there exists a covector $Z$ such that $Z_i = 0$ and $Z_j = C_j = D_j$ for all $j \in [M] \setminus \{i\}$. Thus, the chambers corresponding to $C,D$ are separated by the hyperplane $(1,\p_i)^\perp$ and $Z$ corresponds to the wall $C\cap D$.\\
    Suppose now the statement holds for any $D',D''$ such that $|S(D',D'')|<k$ and let $S(C,D) = k$. Fix $i \in S(C,D)$. By \ref{axiom2} there exists some covector $Z$ such that $Z_i = 0 $ and $Z_j = C_j = D_j$ for all $j \in [M] \setminus S(C,D)$. Let $A = \{ i \in [M] \mid Z_i = 0 \}$ and $l = |A| \geq 1$. 
    Among all such covectors, choose $Z$ such that $l$ is minimal. Thus, there exists no strict subset $A' \subsetneq A$ and covector $Z'$ such that $Z'_i = 0$, $Z'_j = C_j = D_j$ for all $[M] \setminus S(C,D)$ and $Z'_j \neq 0$ for all $j \in [M]\setminus A'$. Recall that $Z'$ corresponds to a cone of a $\Sigma_\data$, which is contained in the hyperplane $(1,\p_j)^\perp$ if and only if $Z'_j = 0$. Since $Z$ exists, but no such $Z'$ exists, this implies that the hyperplanes $(1,\p_j), j \in A$ all coincide. In other words, if $Z$ is chosen minimally, then $\p_j = \p_{j'}$ for all $j,j' \in A$, and hence $Z$ represents a wall of $\Sigma_\data$.
    By \ref{axiom1} we have that also $(Z \circ C)$ and $(Z \circ D)$ are covectors. Note that $(Z \circ C)_j = Z_j = (Z \circ D)_j \neq 0$ for all $j \in [M] \setminus A$, and $\{(Z \circ C)_i, Z_i, (Z\circ D)_i\} = \{+,0,-\}$ for all $i \in A$. Thus, $(Z \circ C),(Z\circ D)$ correspond to adjacent maximal chambers separated by $(1,\p_i), i \in A$, and the separating wall corresponds to $Z$. Since $S(C,Z\circ C)<k$ and $S(Z \circ D,D)<k$, there exist strictly decreasing sequences from $C$ to $(Z \circ C)$ and from $Z \circ D$ to $D$ by induction, which together form a sequence from $C$ to $D$.
\end{proof}

Given a sequence $\sigma_1,\dots,\sigma_k \in \Sigma_\data$ of maximal cones, we say that the sequence \emph{forms a path} in $\Sigma_\data$ connecting $\sigma_1$ and $\sigma_k$ if $\sigma_i,\sigma_{i+1}$ intersect in a wall {of codimension 1} for all $i \in [k-1]$. 
\Cref{prop:chamber-walking} allows us to characterize the set of local and global minima, and bound the error along paths between two minima. 

\begin{theorem}\label{prop:linear-mistakes-connected}
    If the data is linearly separable, then the sublevel sets $\Sigma^{\leq k}_\data$ of the $0/1$-loss function are connected through walls of codimension $1$ for any $k \geq 0$. 
    Conversely, if the data is not linearly separable and the set of minimizers of the $0/1$-loss function consists of more than one maximal cone, then it is not connected through codimension $1$. 
    Moreover, if $\sigma,\sigma'$ are distinct maximal cones which are local minima of the $0/1$-loss, if $m = \err_{\target}(\sigma)$ denotes the minimum error and if $\sigma,\sigma_1,\dots,\sigma_l,\sigma'$ forms a path, then $\err_{\target}$ is bounded from above along this path by $m + \lfloor \frac{l+1}{2} \rfloor$. 
\end{theorem}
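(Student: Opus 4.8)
The plan is to reduce everything to one elementary observation about $\Sigma_\data$ and then feed the connectivity part to \Cref{prop:chamber-walking}. The observation, valid when the data points $\p_1,\dots,\p_M$ are pairwise distinct (as the set notation suggests), is that two maximal cones of $\Sigma_\data$ sharing a wall of codimension $1$ are separated by a \emph{single} hyperplane $(1,\p_i)^\perp$, so their covectors differ in the one coordinate $i$; hence for such $\sigma,\sigma'$ one has $|\err_{\target}(\sigma)-\err_{\target}(\sigma')|=1$, and therefore along \emph{any} path $\sigma_0,\dots,\sigma_t$ the sequence $\err_{\target}(\sigma_0),\dots,\err_{\target}(\sigma_t)$ is a walk on $\Z_{\ge 0}$ with steps $\pm1$. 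This already gives the second assertion: if the data is not linearly separable then $\target$ is not a covector, so the minimum error $m$ is positive; more to the point, any two distinct minimizing maximal cones carry the same value $m$ and so, by the observation, cannot meet in a codimension-$1$ wall. Thus the wall-adjacency graph restricted to the set of minimizers has no edges, and since that set contains at least two maximal cones it is disconnected through codimension $1$.

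\textbf{First assertion.}
When the data is linearly separable according to $\target$, the vector $\target\in\{-,+\}^M$ is itself a maximal covector, realized by the single cone $\Sigma^0_\data$. For a maximal cone $\sigma\in\Sigma^{\le k}_\data$ with covector $D$, I would apply \Cref{prop:chamber-walking} to the pair $\target,D$ to obtain maximal covectors $D=D^0,D^1,\dots,D^r=\target$, consecutive cones meeting in a codimension-$1$ wall, with $S(\target,D^0)\supsetneq S(\target,D^1)\supsetneq\dots\supsetneq S(\target,D^r)=\emptyset$. Since $\err_{\target}(D^j)=|S(\target,D^j)|$, the error strictly decreases along this path; hence every $D^j$, and also every intermediate codimension-$1$ wall (passing to a wall only turns a misclassified point into a boundary point, counted as correct), has error $\le\err_{\target}(D)\le k$ and so lies in $\Sigma^{\le k}_\data$. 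Given two maximal cones of $\Sigma^{\le k}_\data$, concatenating each one's path to $\Sigma^0_\data$ (reversing the second) connects them inside $\Sigma^{\le k}_\data$ through codimension-$1$ walls; the case $k=0$ is trivial since $\Sigma^0_\data$ is a single cone.

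\textbf{Third assertion.}
Let $m$ be the minimum error, attained at both $\sigma$ and $\sigma'$, and put $a_i=\err_{\target}(\sigma_i)$ for $i=0,\dots,l+1$ with $\sigma_0=\sigma$, $\sigma_{l+1}=\sigma'$. Then $a$ is a $\pm1$-walk with $a_0=a_{l+1}=m$ and $a_i\ge m$ for all $i$. If $m+H=\max_i a_i$ is attained at index $i^*$, then climbing to it from the left costs $i^*\ge H$ steps and descending back to level $m$ on the right costs $(l+1)-i^*\ge H$ steps, so $l+1\ge 2H$ and hence $H\le\lfloor\tfrac{l+1}{2}\rfloor$, which is the stated bound. (The local-minimum hypothesis at $\sigma$ and $\sigma'$ pins $a_1=a_l=m+1$; this is exactly the equality case of the estimate, read one step in from each end.)

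\textbf{Main obstacle.}
The delicate points are bookkeeping rather than conceptual. One must assume the data points are distinct: otherwise a codimension-$1$ wall can flip several covector entries at once, the ``$\pm1$ step'' fails, and the second assertion becomes false (take two copies of one point carrying opposite labels). One must also read ``$m$ is the minimum error'' as asserting that \emph{both} $\sigma$ and $\sigma'$ attain it, since a strictly spurious local minimum $\sigma'$ of higher error can be joined to a global minimum by a short path violating $m+\lfloor\tfrac{l+1}{2}\rfloor$. Finally, in the first assertion one has to verify carefully that the codimension-$1$ walls produced by \Cref{prop:chamber-walking} really lie in the subfan $\Sigma^{\le k}_\data$ — precisely the place where the convention ``points on the classifying hyperplane count as correct'' is used.
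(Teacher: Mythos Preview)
Your proposal is correct and follows essentially the same route as the paper: reduce to \Cref{prop:chamber-walking} for the connectivity statement, use the ``adjacent maximal covectors differ in one entry'' observation for the non-separable case, and bound the path maximum via the $\pm1$-walk argument for the last part. Your treatment is in fact a bit more careful than the paper's in flagging the assumptions that the data points be pairwise distinct and that both endpoints attain the value $m$; the paper uses both implicitly (it writes ``Since $\err_{\target}(\sigma_0)=\err_{\target}(\sigma_{l+1})$'') without comment.
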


The first two statements of this theorem are well-established in the literature; the \mbox{(sub-)} level sets of the $0/1$-loss function are dual to $k$-facets heavily studied in computational geometry \cite[Section 1.2]{wagner08_surveysdiscretecomputational}, and to linear programs with violated constraints \cite{matousek95_geometricoptimizationfew}. In \Cref{sec:rational-classification} we will make related statements for the case of continuous piecewise linear classifiers.
Therefore, we give a proof for completeness also for this theorem.

Before we prove the statement, we point out that the assertion of the second sentence is not void, and truly necessary. 
For example, consider the point configuration of $5$ points with coordinates $1,2,3,4,5$ in $\R^1$. The hyperplane arrangement consists of $10$ chambers. The set of minimizers for the target dichotomy $(+,-,+,-,+)$ consists of $5$ maximal cones (which all make $2$ mistakes), which pairwise intersect only in the origin, and are thus not connected through walls. On the other hand, for the target dichotomy $(+,-,-,+,+)$ the data is not linearly separable, however the minimizer consists of a single cone with covector $(-,-,-,+,+)$, which makes precisely one mistake.

\begin{proof}
    Suppose the data $\data$ is linearly separable. Then the set which minimizes $\err_{\target}$ is  $\Sigma^0_\data = \{\target\}$ and consists of a single chamber. In particular, $\Sigma_\data^0$ is connected. For any $k > 0$ and $D \in \Sigma^{k}_\data$, we have that $\err_{\target}(D) = S(\target,D) = k$, and \Cref{prop:chamber-walking} implies that there is a decreasing path from $D$ to $\target$ through walls of codimension $1$. Thus, the sublevel sets are connected through walls. 
    For the second satement, suppose that $\data$ is not separable and the set of minimizers contains at least two chambers.
    Now, assume for contradiction that it is connected through a wall of codimension $1$. 
    That means there are covectors $C,D$ with minimal $|S(\target,C)|=|S(\target,D)|$ and an index $i$ such that the corresponding chambers are separated by the $i$th hyperplane $(1,\p_i)^\perp$. 
    In other words, $C_i = -D_i$ and $C_j = D_j$ for all $j \in [M], j \neq i$. 
    The latter implies $|S(\target,C)| \neq |S(\target,D)|$, a contradiction.
    For the last statement, let $\sigma = \sigma_0,\sigma_1,\dots,\sigma_l,\sigma' = \sigma_{l+1}$ be a path in $\Sigma_\data$, and let $D_i$ be the covector corresponding to $\sigma_i$. Then $\err_\target(\sigma_i) = S(C,D_i)$. However, since $\sigma_i,\sigma_{i+1}$ intersect in codimension $1$, we have that $S(D_i,D_{i+1}) = 1$ and so $\err_\target(\sigma_{i+1}) \leq \err\target(\sigma_{i}) +1$, i.e. the error increases in each cone at most by one. Since $\err_\target(\sigma_0) = \err_\target(\sigma_{l+1})$ this implies that the increase of the error along the path is bounded in terms of the length of the path by $\lfloor \frac{l+1}{2} \rfloor$, yielding a total bound of $m+\lfloor \frac{l+1}{2} \rfloor$ on the path.
\end{proof}

The bound of $m+\lfloor \frac{l+1}{2} \rfloor$ can indeed be attained even as minimum among all paths between $\sigma$ and $\sigma'$. 
One example is the $1$-dimensional data with coordinates $1,2,\dots,4k$ for some $k \in \N$ and target dichotomy 
$(
    \underbrace{+,\dots,+}_{k \text{ times}},
    \underbrace{-,\dots,-}_{2k \text{ times}},
    \underbrace{+,\dots,+}_{k \text{ times}})
$. The covectors minimizing the $0/1$-loss are 
$$(
    \underbrace{+,\dots,+}_{k \text{ times}},
    \underbrace{-,\dots,-}_{3k \text{ times}},
    )
\ \text{ and } \
(
    \underbrace{-,\dots,-}_{3k \text{ times}},
    \underbrace{+,\dots,+}_{k \text{ times}}),
$$
each making $m=k$ mistakes,
and are connected by two distinct paths, each of length $l+1 =2k+1$, so $m+\lfloor \frac{l+1}{2} \rfloor = 2k$. One of the paths contains the covector $(+,\dots,+)$, and the other contains $(-,\dots,-)$, each of them making $2k$ mistakes and thus attaining the bound.

\section{Piecewise Linear Classifiers, ReLU Networks, and Tropical Rational Functions}\label{sec:piecewise-linear-functions}

In this section we extend the theoretical framework from linear classifiers to the more general case of classification with continuous piecewise linear functions. An important instance of this are neural networks with piecewise linear activation functions. 

Recall that an $L$-layer feedforward neural network represents a function $f: \R^{d_{\text{in}}} \to \R^{d_{\text{out}}}$ which is obtained as a composition $f = \sigma^{(L)} \circ \varphi^{(L)} \circ \cdots \circ \sigma^{(1)} \circ \varphi^{(1)}$. 
For each $\ell \in [L]$ the \emph{preactivation function} $\varphi^{(\ell)}\colon \R^{d_{\ell-1}} \to \R^{d_{\ell}}$ is an affine function $\varphi^{(\ell)}(x) = W^{(\ell)}x + c^{(\ell)}$ with \emph{weights} $W^{(\ell)} \in \R^{d_{\ell} \times d_{\ell-1}} $ and \emph{biases} $c^{(\ell)} \in \R^{(d_{\ell})}$ 
of the \emph{$\ell^\text{th}$ layer}, and $d_{\text{in}} = d_0, d_{\text{out}} = d_L$. 
A ReLU neural network is a neural network with \emph{ReLU activation function (Rectified Linear Unit)} $\sigma^{(\ell)}(x_1,\dots,x_{d_{\ell}}) = (\max(x_1,0),\dots,\max(x_{d_{\ell}},0))$. 
We denote $f^{(\ell)}\colon \R^{d_{\text{in}}} \to \R^{d_{\ell}}$, $f^{(\ell)} = \sigma^{(\ell)} \circ \varphi^{(\ell)} \circ \cdots \circ \sigma^{(1)} \circ \varphi^{(1)}$ and $f^{(0)}(x) = x$. The number $L$ of such compositions is the \emph{depth} of the network, and the dimension $d_{\ell}, \ell \in [L]$ is the \emph{width} of the $\ell^\text{th}$ layer. 
We do not impose any further restrictions on the weights and biases and so the choices of the depth and width of each layer determine the architecture of the network. The network consists of all of its activation and preactivation functions. 

Any function represented by a ReLU neural network is a continuous piecewise-linear function. A powerful framework to study piecewise-linear functions is provided by the language of tropical geometry. 
This is the geometry over the tropical semiring (or $\max$-plus algebra) where we define tropical addition, multiplication, exponentiation and division as 
\[
    a \oplus b = \max(a,b) , \;  a \odot b = a + b , \;  a^{\odot b} = a \cdot b, \; a \oslash b  = a-b \;  \text{ for } a,b \in \R . 
\]
For vectors $\x,\s \in \R^d$ we will write $\x^{\odot \s} = x_1^{\odot s_1} \odot \cdots \odot x_d^{\odot s_d} = \inner{\x,\s}$. 
A \emph{tropical Laurent polynomial} is a function $g\colon \mathbb{R}^d\to\mathbb{R}$ of the form 
\[
    g(\x) = \bigoplus_{i \in [n]} a_i \odot \x^{\odot \s_i} = \max_{i \in [n]} \ (a_i + \inner{\s_i,\x}),
\]
where $\x = (x_1, \dots, x_d)$ is the input variable and 
$a_i \in \R$ and $\s_i\in \Z^d, i \in [n],$ are parameters. A \emph{tropical signomial} follows the same definition, except that we allow $\s_i\in \R^d, i \in [n]$. 
For the purposes of this article, this distinction makes no difference, so we use these terms interchangeably. 
In the same spirit, we define a \emph{tropical rational function} as a function 
\[
f(\x) =   (g \oslash h) (\x) = \bigoplus_{i \in [n]} a_i \odot \x^{\odot\s_i} \oslash \bigoplus_{j \in [m]} b_j \odot \x^{\odot \bt_j} = \max_{i \in [n]} \ (a_i + \inner{\s_i,\x}) - \max_{j \in [m]} \ (b_j + \inner{\bt_j,\x}),
\]
which is determined by its parameter vector $\theta = (a_1,\s_1\dots,a_n,\s_n,b_1,\bt_1,\dots,b_m,\bt_m)$, where $a_i \in \R,\s_i \in \R^{d}$ for $ i \in [n]$ and $b_j \in \R, \bt_j \in \R^d$ for  $j \in [m]$. 
We denote by $\pspace$ the $(n+m)(d+1)$-dimensional space of parameters of tropical rational functions with $n$ terms in the numerator and $m$ terms in the denominator, and for a fixed $\parameter \in \pspace$ we denote by $g_\parameter \oslash h_\parameter$ the associated tropical rational function. In \Cref{sec:rational-classification} we show that $\pspace$ allows a natural polyhedral fan structure in analogy to the fan and hyperplane arrangement from \Cref{sec:linear-classification}.

A tropical signomial is a convex {and continuous} piecewise linear function, and vice versa. 
Every continuous (possibly non-convex) piecewise linear function can be written as the difference of two convex piecewise linear functions, and this difference is a tropical rational function. This establishes the connection to ReLU neural networks. 

\begin{theorem}[{\cite[Theorem 2.1]{arora18_understandingdeepneural} and \cite[Theorem 5.4]{zhang18_tropicalgeometrydeep}}]
    A function $f\colon \R^d \to \R$ is a tropical rational function if and only if $f$ can be represented by a feedforward ReLU network. 
    {Any} tropical rational function $g \oslash h$ with $n$ terms in the numerator and $m$ terms in the denominator can be represented by a ReLU network with depth at most $\min(\lceil \log_2(d+1) \rceil +1, \max(\lceil \log_2(n) \rceil, \lceil \log_2(m) \rceil) +2 )$. 
\end{theorem}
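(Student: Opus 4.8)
The plan is to prove the qualitative equivalence together with the two depth estimates separately, the minimum in the statement being obtained by simply taking whichever bound is smaller. For the easy inclusion, that a ReLU network computes a tropical rational function, I would induct on the depth, showing that every coordinate of $f^{(\ell)}$ is a difference $g-h$ of two convex continuous piecewise linear functions. The base case $f^{(0)}(\x)=\x$ is immediate since $x_k=x_k-0$. For the step I would first observe that this class is closed under affine combinations: $(g_1-h_1)+(g_2-h_2)=(g_1+g_2)-(h_1+h_2)$, a nonnegative multiple $\lambda(g-h)=(\lambda g)-(\lambda h)$ keeps both parts convex, a negative multiple rewrites as $(-\lambda)h-(-\lambda)g$, and adding a constant is harmless; hence the affine preactivation $\varphi^{(\ell)}$ preserves the class coordinatewise. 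For the activation I would use $\mathrm{ReLU}(g-h)=\max(g,h)-h$, which exhibits the ReLU of a tropical rational function again as a difference of two convex piecewise linear functions, since a pointwise maximum of convex functions is convex. Thus $f^{(L)}$, and any affine read-out of it, is a tropical rational function.

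For the converse together with the bound in terms of $n$ and $m$: given $f=g\oslash h$ with $g=\max_{i\in[n]}(a_i+\inner{\s_i,\x})$ and $h=\max_{j\in[m]}(b_j+\inner{\bt_j,\x})$, I would compute $g$ and $h$ by balanced binary max-trees, realizing each internal pairwise maximum $\max(u,v)=v+\mathrm{ReLU}(u-v)$ with a single ReLU layer (carrying the needed auxiliary affine quantities forward through that layer via $x=\mathrm{ReLU}(x)-\mathrm{ReLU}(-x)$). A tree over $n$ leaves has $\lceil\log_2 n\rceil$ levels, so running the two computations in parallel uses $\max(\lceil\log_2 n\rceil,\lceil\log_2 m\rceil)$ layers, and one further layer produces the (possibly negative) output $g-h=\mathrm{ReLU}(g-h)-\mathrm{ReLU}(h-g)$. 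Accounting for the initial affine layer and the final combination yields the claimed depth $\max(\lceil\log_2 n\rceil,\lceil\log_2 m\rceil)+2$.

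For the alternative bound in terms of $d$, I would invoke the structural fact established by Arora et al.\ \cite{arora18_understandingdeepneural} that every continuous piecewise linear $f\colon\R^d\to\R$ admits a signed-sum-of-maxima representation $f=\sum_k\varepsilon_k\max_{i\in S_k}(a_i+\inner{\s_i,\x})$ with $\varepsilon_k\in\{\pm 1\}$ and $|S_k|\le d+1$ for all $k$; this follows from refining the polyhedral complex on which $f$ is linear to a simplicial one and using a local min--max (lattice) representation on the star of each cell. Each summand is then a max-tree of depth $\lceil\log_2(d+1)\rceil$, and one more layer forms the signed sum (again via $x=\mathrm{ReLU}(x)-\mathrm{ReLU}(-x)$), giving depth $\lceil\log_2(d+1)\rceil+1$.

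The inductions and the tree constructions are routine; the genuinely substantive ingredient is the $d$-dependent bound, i.e.\ the existence of a signed-sum-of-maxima representation in which every maximum involves at most $d+1$ affine pieces. This rests on the careful triangulation argument of the cited work, which I would cite rather than reprove.
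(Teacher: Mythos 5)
The paper does not give its own proof of this theorem: it is stated as a citation to \cite{arora18_understandingdeepneural} and \cite{zhang18_tropicalgeometrydeep}, with only a brief remark that the integer-weight hypothesis in Zhang et al.\ can be dropped when working with signomials. Your sketch correctly reconstructs the arguments from those two sources. The forward direction, tracking a difference-of-convex-PWL representation through affine layers and through $\mathrm{ReLU}(g-h)=\max(g,h)-h$, is the standard inductive argument. The $n,m$-dependent depth bound via parallel balanced max-trees for $g$ and $h$, with identity wires carried forward as $x=\mathrm{ReLU}(x)-\mathrm{ReLU}(-x)$, is the route in Zhang et al.\ and your layer count $\max(\lceil\log_2 n\rceil,\lceil\log_2 m\rceil)+2$ matches. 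The $d$-dependent bound via the signed-sum-of-maxima representation with $|S_k|\le d+1$ is precisely the content of Arora et al.'s Theorem 2.1, and you correctly identify that as the one genuinely substantive external input; depth $\lceil\log_2(d+1)\rceil+1$ follows. So your proposal is correct and in line with the cited sources, whereas the paper leaves the proof entirely to the references.
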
 
A similar result has also appeared in \cite{pmlr-v119-siahkamari20a}. The original formulation of \cite[Theorem 5.4]{zhang18_tropicalgeometrydeep} has an additional condition on the weights to be integer, however this is merely an artifact of the distinction between tropical Laurent polynomials and signomials.
Given the class of tropical rational functions with a bounded number of terms in the numerator and denominator, this gives a sufficient condition on the depth of the architecture. 

\newcommand{\Relu}{\mathbf{ReLU}}
\newcommand{\pwl}{\mathbf{CPWL}(\R^d,\R)}

We may also consider the reverse direction: 
Let $\Relu(d_0,d_1,\dots,d_{L-1},d_L)$ be the set of piecewise-linear functions represented by a fully-connected ReLU network with $L$ layers, where $d_0,d_1,\dots,d_{L-1},d_L \in \N$ are the sizes of each layer respectively, and $d_0 = d, d_L = 1$.
Given such a fixed architecture,
we seek to find $n,m \in \N$ such that for any function $f \in \Relu(d,d_1,\dots,d_{L-1},1)$  there exists a parameter $\theta \in \Theta(d,n,m)$ such that $f = f_\theta$.
For lower bounds on $n,m$, let $k_{\text{convex}}$ be the maximum number of linear pieces over all convex functions in $\Relu(d,d_1,\dots,d_{L-1},1)$, and $k_{\text{concave}}$ the maximum number of linear pieces of any concave function. Then necessarily we have $n \geq k_{\text{convex}}$ and $m \geq k_{\text{concave}}$.
To obtain upper bounds, we consider a simple decomposition of the functions represented by a ReLU network as differences of convex piecewise linear functions, whose size depends on $k_\text{convex}$ and $k_\text{concave}$ in each step. For most of the functions in $\Relu(d,d_1,\dots,d_{L-1},1)$ the decomposition that we apply is by no means minimal for the individual function.
However, this decomposition allows us to consider the space of ReLU networks as semialgebraic sets inside $\Theta(d,n,m)$.
We will discuss minimal decompositions at the end of this section.

\begin{theorem}\label{th:relu-semialg-embedding}
    Let $d= d_0,d_1,\dots,d_{L-1},d_L=1 \in \N$. 
    There exist $n, m \in \N$ such that each function in $\Relu(d,d_1,\dots,d_{L-1},1)$ can be represented by a point in $\pspace$, and 
    there is a semialgebraic subset of $\pspace$ (described by polynomial inequalities) representing exactly the points in $\Relu(d,d_1,\dots,d_{L-1},1)$.
    This semialgebraic set can be described by
    polynomial inequalities of degree $\leq L+1$.
    The $n,m$ can be chosen as $n = 2 m$ and $\log_2(m) \leq \sum_{k=1}^{L-1} 2^{L-1-k} \prod_{l=k}^{L-1} d_l$. 
\end{theorem}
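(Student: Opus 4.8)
The plan is to make the decomposition of a ReLU network into a difference of convex piecewise linear functions explicit and layer-by-layer, tracking three things simultaneously: (i) the number of linear pieces needed in the numerator and denominator, (ii) the algebraic degree of the expressions defining the parameters $a_i,\s_i,b_j,\bt_j$ in terms of the network weights $W^{(\ell)}$ and biases $c^{(\ell)}$, and (iii) the semialgebraic description of the image. The basic building block is the identity $\max(u,0) = \max(u,0) - \max(0,0)$ together with the fact that for convex piecewise linear $g$ one has $\sigma\circ\varphi$ applied to a vector of differences $g^{(\ell)}-h^{(\ell)}$ can again be written as a difference $g^{(\ell+1)}-h^{(\ell+1)}$ of convex pieces: if the $i$th coordinate of the preactivation is $\langle W^{(\ell+1)}_i, g^{(\ell)}-h^{(\ell)}\rangle + c^{(\ell+1)}_i$, split $W^{(\ell+1)}_i$ into positive and negative parts $W^+ , W^-$ and write this as $(\langle W^+, g^{(\ell)}\rangle + \langle W^-, h^{(\ell)}\rangle + c) - (\langle W^+, h^{(\ell)}\rangle + \langle W^-, g^{(\ell)}\rangle)$, a difference of two convex functions; then apply $\max(\cdot,0)$ which keeps convexity of the whole and contributes the constant-zero convex function to the other side. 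Expanding the resulting maxima over the linear pieces of $g^{(\ell)}$ and $h^{(\ell)}$ gives the recursion for the piece counts: roughly, the number of pieces of the convex part at layer $\ell+1$ is bounded by a product/sum over the widths $d_k$, leading to the claimed bound $\log_2(m)\le \sum_{k=1}^{L-1} 2^{L-1-k}\prod_{l=k}^{L-1} d_l$, and the factor $2$ in $n = 2m$ accounts for the asymmetry between the $\max(\cdot,0)$ bookkeeping on numerator versus denominator. I would set up this recursion carefully as a sequence of lemmas and then solve it.

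For the degree bound, the key observation is that the coefficients $a_i$ and the slopes $\s_i$ of the convex pieces obtained after $\ell$ layers are polynomials of degree at most $\ell+1$ in the entries of $W^{(1)},\dots,W^{(\ell)},c^{(1)},\dots,c^{(\ell)}$: each composition with an affine preactivation multiplies existing slope-polynomials by one new weight matrix (degree $+1$) and the biases enter linearly, so after $L$ layers the defining data of $f_\theta$ are polynomials of degree $\le L+1$ in the network parameters. Since the image of $\Relu(d,d_1,\dots,d_{L-1},1)$ inside $\pspace$ is precisely the image of this polynomial map $\Phi$ from the network parameter space, it is a semialgebraic set by the Tarski–Seidenberg theorem, and more precisely one cut out by polynomial inequalities of degree $\le L+1$: these inequalities encode, via sign conditions on the degree-$\le L+1$ minors / resultants of $\Phi$, the condition that a given $\theta\in\pspace$ is equal to $\Phi(W,c)$ for some $(W,c)$. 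I would spell out that the consistency conditions are "there exist $W,c$ such that [polynomial equations of degree $\le L+1$ hold]", and quantifier elimination preserves the degree bound in the sense needed (or, more directly, one exhibits the projection explicitly because $\Phi$ is "triangular" in the weights and can be inverted stage by stage on the locus where it is generically finite).

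The main obstacle I anticipate is bookkeeping the piece counts tightly enough to land exactly at $\log_2(m)\le \sum_{k=1}^{L-1} 2^{L-1-k}\prod_{l=k}^{L-1} d_l$ rather than a looser bound: the naive recursion "multiply by $2^{d_{\ell}}$ per layer" is far too weak, and one needs the better estimate that the convex decomposition of a sum of $d_\ell$ difference-of-convex functions, each with $p$ and $q$ pieces, has a numerator with $\le \prod(p_i)$-type growth but that the relevant quantity controlling the next layer is additive-then-multiplicative in a way that telescopes to the stated sum-of-products. A secondary subtlety is verifying that the semialgebraic set is genuinely cut out by inequalities (not just a projection), i.e. giving the explicit polynomial certificate; here I would rely on the layer-triangular structure of $\Phi$ to recover $(W^{(\ell)},c^{(\ell)})$ successively from $\theta$ by solving polynomial systems of controlled degree, so that membership becomes a finite conjunction of polynomial equalities and inequalities of degree $\le L+1$ — and I would flag (as the acknowledgements already hint, thanking Mereta on the formulation) that the cleanest statement is about the degree of a defining system rather than a single defining polynomial.
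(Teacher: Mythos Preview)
Your plan matches the paper's proof: induct on $L$, split each weight row $W = W^+ - W^-$ into nonnegative parts so the preactivation becomes a difference $A-B$ of convex functions, handle the ReLU, and track piece counts multiplicatively and polynomial degree additively layer by layer. One correction is needed: your phrase ``$\max(\cdot,0)$ keeps convexity of the whole and contributes the constant-zero convex function to the other side'' is not the right mechanism, since $\max(A-B,0)$ is \emph{not} convex when $A,B$ are both nonlinear convex. The identity the paper uses is $\max(A-B,0)=\max(A,B)-B$, the difference of the convex functions $\max(A,B)$ and $B$; this is what immediately gives $n=2m$ at every layer and makes the piece-count recursion $m_{L+1}=\prod_{k=1}^{d_L} n_{k,L}\prod_{k=1}^{d_L} m_{k,L}=2^{d_L}m_L^{2d_L}$, i.e.\ $\log_2 m_{L+1}=d_L+2d_L\log_2 m_L$, which solves directly to the stated bound (so your worry that ``multiply by $2^{d_\ell}$'' is too weak is misplaced --- this \emph{is} the recursion). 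On the degree bound: quantifier elimination does \emph{not} preserve degrees in general, so drop that claim; your ``layer-triangular'' hedge is the correct argument and is what the paper does --- the semialgebraic set is described via the recursive relations themselves, each layer's tropical parameters being degree-one in the previous layer's parameters and the new weights $W^\pm$, gaining one degree per layer.
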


We make the statement of this theorem more precise. 
Recall that any vector of parameters $\parameter \in \pspace$ defines a tropical rational function $g_\parameter \oslash h_\parameter$ {with at most $n$ monomials in the numerator and at most $m$ monomials in the denominator}. 
Let $\pwl$ be the space of all {continuous} piecewise linear functions from $\R^d$ to $\R$, and let $\psi\colon \pspace \to \pwl$ with $\psi(\parameter) = g_\parameter \oslash h_\parameter$. 
Then \Cref{th:relu-semialg-embedding} says that for any $d_1,\dots,d_{L-1} \in \N$ there exist values $n,m \in \N$ such that $\Relu(d,d_1,\dots,d_{L-1},1) \subseteq \psi(\pspace)$, and there exists a semialgebraic set $\mathcal S \subseteq \pspace$ such that $\psi(\mathcal S) = \Relu(d,d_1,\dots,d_{L-1},1)$.

\begin{proof}
We show this by induction on $L$. 
For $L=1$, let $W \in \R^{1 \times d}, c \in \R$ denote the weights and biases. Then the network represents the function $f^{(1)} = \max(Wx + c, 0)$. This is already a convex piecewise linear function (so $\max(Wx + c, 0) - 0$ is a representation as a difference with $n=2,m=1$). 
We 
still perform a transformation
to illustrate the procedure that we will use for $L>1$ in the induction step. 
We write $W = W_+ - W_-$, where $W_+ , W_- \in \R^{1 \times d}_{\geq 0}$. 
Then one representation of $f^{(1)} = \max(Wx + c, 0)$ as difference of convex functions is
$$
g_\parameter - h_\parameter = \max(W_+ x + c, W_- x) - W_- x 
$$    
for the parameter vector $\parameter = (c,W_+,0,W_-,0,W_-) \in \psi^{-1}(f^{(1)}) \subseteq \pspace[d,2,1]$. 
Since $W_+,W_-$ are nonnegative vectors with disjoint support, we have that $\parameter$ is contained in the semialgebraic set 
\[
    \mathcal S = \{(a_1,\s_1,a_2,\s_2,b_1, \bt_1) \in \R^{3(d+1)} \mid a_2 = b_1 = 0, \s_2 = \bt_2 \leq 0, \s_1 \geq 0, \s_{1i} \s_{2i} = 0 \forall i \in [d] \},
\] 
which is defined by $d+2$ linear equations, $2d$ linear inequalities and $d$ quadratic equations. 

Conversely, for any $\parameter \in \mathcal S$, the function $\psi(\theta)$ is represented by a ReLU network as follows. 
If $\parameter = (a_1,\s_1,0,\s_2,0,\bt_1)$, where $\s_1,\s_2$ are nonnegative and have disjoint support, then defining $W = \s_1 - \s_2 \in \R^{1 \times d}$ gives $\psi(\parameter) = \max(Wx+a_1,0)$, which is represented by the network with layers of sizes $d_0=d,d_1=1$, weights $W$ and biases $a_1$.

We now proceed by induction $L \to L+1$, and consider a ReLU network with $L+1$ layers. 
The idea of the induction is as follows. Let $W$ and $c$ be the weights and biases of the last layer. Then $W = W^+ - W^-$ has a canonical decomposition into nonnegative matrices,  and the entire function is of the form $f^{(L+1)} = \max(\sum_{k=1}^{d_L} W_k f_k + c, 0)$, where $f_1.\dots,f_k$ are the tropical rational functions in the $L^\text{th}$ layer. Writing $f_k = g_k - h_k$ as a difference of two convex functions, we obtain the decomposition $f^{(L+1)} = \max(\sum_{k=1}^{d_L} (W^+_k - W^-_k) (g_k - h_k) + c, 0) = \max(\sum_{k=1}^{d_L} W^+_k g_k + W^-_k h_k + c, W^-_k g_k + W^+_k h_k) -  (W^-_k g_k + W^+_k h_k) $. 
We now perform this computation in more detail.

The first $L$ layers 
represent a collection of $d_L$ tropical rational functions, i.e.\ 
$f^{(L)}\colon \R^{d_0} \to \R^{d_L}$, 
\begin{gather*}
    f^{(L)}(\x) = \ma g_1^{(L)}(\x) \oslash h_1^{(L)}(\x) \\ \vdots \\ g_{d_L}^{(L)}(\x) \oslash h_{d_L}^{(L)}(\x) \trix , \\
    g_k^{(L)}(\x) = \bigoplus_{i \in [n_{k,L}]} a_i^{k,L} \odot \x^{\s_i^{k,L}}, \quad h_k^{(L)}(\x) = \bigoplus_{j \in [m_{k,L}]} b_j^{k,L} \odot \x^{\bt_j^{k,L}}, \
\end{gather*}
where $n_{k,L}, m_{k,L} \in \N, k \in [d_L]$ and $a_i^{k,L}, b_j^{k,L} \in \R, \s_i^{k,L},  \bt_j^{k,L} \in \R^{d_0}$.
Again, let $W = W^+ - W^-$ with $W_+ , W_- \in \R^{1 \times d_L}_{\geq 0}, c \in \R$ be the weights and biases of the last layer. 

Let 
\[
    Y_{\text{convex}} = \sum_{k=1}^{d_L} (W^+_k g_k^{(L)}(\x) + W_k^- h_k^{(L)}(\x)),
\]
\[
    Y_{\text{concave}} = \sum_{k=1}^{d_L} (W^-_k g_k^{(L)}(\x) + W_k^+ h_k^{(L)}(\x)).
\]
Then 
$f^{(L+1)}(\x) = 
    \max(Y_{\text{convex}}- Y_{\text{concave}} + c, 0)  =\max(Y_{\text{convex}}+c,Y_{\text{concave}}) - Y_{\text{concave}} $. 
We now compute $Y_{\text{convex}}$. $Y_{\text{concave}}$ can be computed analogously. For any nonnegative number $W_k$ holds $W_k \max(a,b) = \max(W_k a,W_k b)$, and furthermore $\max(a,b) + \max(c,d) = \max(a+c,a+d,b+c,b+d)$. Thus, 
\begin{align*}
    Y_{\text{convex}} = 
     &\sum_{k=1}^{d_L} (W^+_k g_k^{(L)}(\x) + W_k^- h_k^{(L)}(\x))  \\
    = 
     &\sum_{k=1}^{d_L} W^+_k \max_{i \in [n_{k,L}]} \left(a_i^{k,L} + \inner{\s_i^{k,L},\x}\right) +                W_k^- \max_{j \in [m_{k,L}]} \left(b_j^{k,L} + \inner{\bt_j^{k,L},\x}\right)  \\
     = 
     &\sum_{k=1}^{d_L}  \max_{i \in [n_{k,L}]} \left( W^+_k \left(a_i^{k,L} + \inner{\s_i^{k,L},\x}\right)\right) +                 \max_{j \in [m_{k,L}]} \left( W_k^-\left(b_j^{k,L} + \inner{\bt_j^{k,L},\x}\right)\right)  \\
    = &\sum_{k=1}^{d_L} 
        \max_{\substack{i \in [n_{k,L}] \\ j \in [m_{k,L}]}} 
         \Bigg( W^+_k \left( a_i^{k,L} + \inner{\s_i^{k,L}, \x}\right) + W^-_k \left(b_j^{k,L} + \inner{\bt_j^{k,L}, \x}\right) 
    \Bigg).
\end{align*} 

Applying again distributivity of tropical multiplication (i.e. that $\max(a,b) + \max(c,d) = \max(a+c,a+d,b+c,b+d)$), and bilinearity of $\inner{\cdot,\cdot}$ yields

\begin{align} \label{eq:computations}
    \begin{split}
    &= \max_{\substack{k \in [d_L] \\ i_k \in [n_{k,L}] \\ j_k \in [m_{k,L}]}} 
    \Bigg(
        \sum_{k=1}^d  W^+_k \left(a_{i_k}^{k,L} + \inner{\s_{i_k}^{k,L}, \x}\right) + W^-_k \left(b_{j_k}^{k,L} + \inner{\bt_{j_k}^{k,L}, \x}\right) 
    \Bigg)\\
    &=  \max_{\substack{k \in [d_L] \\ i_k \in [n_{k,L}] \\ j_k \in [m_{k,L}]}} 
    \Bigg(
    \inner{
        \sum_{k=1}^{d_L}  W^+_k \s_{i_k}^{k,L} + W^-_k \bt_{j_k}^{k,L}
        , \x
    }
    +
    \sum_{k=1}^{d_L} ( W^+_k a_{i_k}^{k,L} + W^-_k b_{j_k}^{k,L}
    )
    \Bigg),
    \end{split}
\end{align}

and this expression consists of $\prod_{k=1}^{d_L} n_{k,L} \prod_{k=1}^{d_L} m_{k,L}$ linear terms. 
A similar reasoning applies to $Y_\text{concave}$. 
Thus, we have expressed $f^{(L+1)}$ as a quotient of two tropical polynomials: 

\begin{align*}
    f^{(L+1)}(\x) = 
    &\max_{\substack{
        k \in [d_L] \\
        i_k \in [n_{k,L}] \\ 
        j_k \in [m_{k,L}]
    }} \left( a_{k,i_k,j_k}^{(L+1)} + \inner{\s_{k,i_k,j_k}^{(L+1)},\x}, a_{d_L+k,i_k,j_k}^{(L+1)} + \inner{\s_{d_L+k,i_k,j_k}^{(L+1)},\x}  \right) - \\ 
    &\max_{\substack{
        k \in [d_L] \\
        i_k \in [n_{k,L}] \\ 
        j_k \in [m_{k,L}]
    }} \left( b_{k,i_k,j_k}^{(L+1)} + \inner{\bt_{k,i_k,j_k}^{(L+1)},\x} \right)
\end{align*}
for some parameters $a_{k,i_k,j_k}^{(L+1)}, b_{k,i_k,j_k}^{(L+1)} \in \R$ and  $\s_{d_L+k,i_k,j_k}^{(L+1)}, \bt_{d_L+k,i_k,j_k}^{(L+1)} \in \R^{d}$ where  $i_k \in [n_{k,L}], j_k \in [m_{k,L}]$ for any $k \in [d_L]$.
In other words, we have proven that 
$$
f^{(L+1)}(x) \in \psi(\pspace[d,2\prod_{k=1}^{d_L} n_{k,L} \prod_{k=1}^{d_L} m_{k,L}, \prod_{k=1}^{d_L} n_{k,L} \prod_{k=1}^{d_L} m_{k,L} ]). 
$$

We first discuss the semialgebraic constraints on these parameters. By induction, the parameters of the tropical rational function representation $f^{(L)}_k = g_k^{(L)} - h_k^{(L)}, k \in [d_L]$ are contained in a semialgebraic set $\mathcal S^{(L)}$ which can be described by polynomial inequalities of degree at most $L+1$. 
The above computation \eqref{eq:computations} imposes linear relations between the parameters of $f^{(L+1)}$ in terms of the parameters of $f^{(L)}_k, k \in [d_L]$. More specifically, 
\begin{subequations}\label{eq:def-of-S}
\begin{align*} 
    \s^{(L+1)}_{k,i_k,j_k} &=  
        \sum_{k=1}^{d_L}  W^+_k \s_{i_k}^{k,L} + W^-_k \bt_{j_k}^{k,L}, 
        & 
        a^{(L+1)}_{k,i_k,j_k} =& \sum_{k=1}^{d_L}  (W^+_k a_{i_k}^{k,L} + W^-_k b_{j_k}^{k,L} 
        )
        + c, \\
    \s^{(L+1)}_{d_L + k,i_k,j_k} &= 
        \sum_{k=1}^{d_L}  W^-_k \s_{i_k}^{k,L} + W^+_k \bt_{j_k}^{k,L}, & 
        a^{(L+1)}_{d_L + k,i_k,j_k} =& \sum_{k=1}^{d_L}  W^-_k a_{i_k}^{k,L} + W^+_k b_{j_k}^{k,L},  \tag{\ref{eq:def-of-S}} \\
     \s^{(L+1)}_{d_L + k,i_k,j_k} &=  \bt^{(L+1)}_{k,i_k,j_k},
     & a^{(L+1)}_{d_L + k,i_k,j_k} =& b^{(L+1)}_{ k,i_k,j_k} . 
\end{align*}
\end{subequations}

Thus the feasible parameters are given as the image of a polynomial map evaluated over a semialgebraic set. In turn, they form a semialgebraic set. Moreover, we have written the variables $\s^{(L+1)}_{k,i_k,j_k}$ etc.\ as a linear combination of the variables $\s^{k,L}_{i_k}$ etc., i.e.\ as the solution of a polynomial of degree 1 in these variables. By induction, each of the $\s^{k,L}_{i_k}$'s is a solution to a system of polynomials of degree $L+1$. Substituting these polynomials into a polynomial of degree $1$ yields a system of polynomials of degree $L+2$. 

We now discuss the number of monomials in this representation. By induction, we can choose each $n_{k,L} = 2m_{k,L}$ and $\log_2(m_{k,L}) \leq \sum_{k=1}^{L-1} 2^{L-1-k} \prod_{l=k}^{L-1} d_l$. Therefore
\begin{align*}
    \log_2(m_{L+1}) &= \sum_{k=1}^{d_L} \log_2(n_{k,L}) +  \sum_{k=1}^{d_L} \log_2(m_{k,L}) \\
    &\leq d_L(\sum_{k=1}^{L-1} 2^{L-1-k} \prod_{l=k}^{L-1} d_l + 1) + d_L(\sum_{k=1}^{L-1} 2^{L-1-k} \prod_{l=k}^{L-1} d_l) \\
    &= \sum_{k=1}^{L-1} 2^{L-1-k} \prod_{l=k}^{L} d_l + d_L + \sum_{k=1}^{L-1} 2^{L-1-k} \prod_{l=k}^{L} d_l \\
    &= 2\left(\sum_{k=1}^{L-1} 2^{L-1-k} \prod_{l=k}^{L} d_l \right) + \sum_{k=L}^{L} 2^{L-k} \prod_{l=k}^{L} d_l  \\
    &= \sum_{k=1}^{L-1} 2^{L-k} \prod_{l=k}^{L} d_l +  \sum_{k=L}^{L} 2^{L-k} \prod_{l=k}^{L} d_l \\
    &= \sum_{k=1}^{L} 2^{L-k} \prod_{l=k}^{L} d_l
\end{align*}
and the stated bound follows.

We have just proven $\Relu(d,d_1,\dots,d_L,1) \subseteq \mathcal \psi(S)$, where $\mathcal S$ is the semialgebraic set which is implicitly defined through the recursive relations in \eqref{eq:def-of-S}.
For the reverse inclusion, let $\parameter \in \mathcal S$. By induction, there exist weights and biases defining a network in $\Relu(d,d_1,\dots,d_L)$ such that $a^{k,L}_{i_k},\s^{k,L}_{i_k}, b^{k,L}_{j_k}, \bt^{k,L}_{j_k}$ are the parameters of respective tropical rational function representations of some $f_k^{(L)}$'s.
By definition of $\mathcal S$ there exists a solution for the system of linear equations \eqref{eq:def-of-S} in indeterminates $W^+_k,W^-_k$. Any such solution gives rise to the weights of the last layer. 
\end{proof}

\begin{remark}[$\mathcal S$ is not unique]
    The proof of \Cref{th:relu-semialg-embedding} reveals that in the above representation there are many redundancies which are expressed in the linear equations defining the semialgebraic set. Moreover, the choice of $\mathcal S$ is not unique: already for $L=1$, choosing the representation $\parameter' = (c,W,0,\0,0,\0) \in \pspace[d,2,1]$ yields the semialgebraic set $\mathcal S' = \{(a_1,\s_1,a_2,\s_2,b_1,\bt_1) \mid a_2=b_1=0,\s_2=
    \bt_1=0\}$. 
    {This reflects the fact that for a given continuous piecewise linear function there exist several possible decompositions into a difference of convex piecewise linear functions.} 
    Either way, since these sets are low-dimensional relative to their ambient space, one may want to project this into a smaller ambient space. However, for any (nontrivial) projection, the degrees of the defining polynomials will in general be higher, 
    and the number of defining inequalities will increase. There is no general statement which reasonably bounds the degree and number of the defining polynomials of the projection of a semialgebraic set. 

{For the number of monomials, we observe that a ReLU network with a total of $N$ ReLUs can only represent continuous piecewise linear functions which have at most $2^N$ linear pieces. Several more refined bounds on the number of linear pieces are available depending on the specific network architecture \cite{NIPS2014_109d2dd3,montufar2017notes,pmlr-v80-serra18b,hinz2020framework}. 
From such bounds one can directly obtain upper bounds on the minimal possible $n$ and $m$ by considering decompositions of continuous piecewise linear functions as differences of convex piecewise linear functions, such as those discussed in \cite{kripfganz87_piecewiseaffinefunctions,Schlueter}. Certain upper bounds for functions in two variables have been presented in \cite{tran2023minimal}
and for general piecewise linear functions in \cite[Proposition~4.3]{hertrich2021towards}.} 
\end{remark}

We close this section by noting that the statement of \Cref{th:relu-semialg-embedding} is not exclusive to ReLU networks, but holds generally for networks with piecewise linear activation functions, for appropriate choices of $n$, $m$ and degree bounds on the polynomials. For example, the proof can be adapted to yield a representation of fixed network architectures with maxout units into a semialgebraic set in the parameter space of tropical rational functions for some bounded $n$ and $m$.

\section{Decision Boundaries}\label{sec:decision-boundaries}

In \Cref{sec:linear-classification} we considered linear classifiers which separate the data into two parts by a hyperplane. In this section, we consider the analogous separating set for continuous piecewise-linear functions. 
This leads to a description of the decision boundary through the lens of (real and positive) tropical geometry. 

Consider the tropical rational function $g \oslash h : \R^d \to \R$, where 
\[
    g(\x) =  \max_{i \in [n]} (a_i + \langle\s_i, \x \rangle)
    \ \text{ and } \
    h(\x)= \max_{j \in [m]} (b_j + \langle \bt_j, \x \rangle)
\]
are tropical signomials with parameters
$a_i\in\mathbb{R}$, $\s_i\in\mathbb{R}^d$, $i\in[n]$ and  $b_j\in\mathbb{R},$  $\bt_j\in\mathbb{R}^d,$ $j\in[m]$.
The \emph{decision boundary} of $g \oslash h$ is
\begin{align*}
	\decbound(g \oslash h) = \{ \x \in \R^d \mid g(\x) \oslash h(\x) = 0 \}
	= \Big\{\x \in \R^d \mid \max_{i \in [n]} (a_{i} + \langle\s_i, \x \rangle) = \max_{j \in [m]} (b_j + \langle \bt_j, \x \rangle) \Big\}. 
\end{align*}
The decision boundary $\decbound(g \oslash h)$ splits the input space into two open parts $\decbound^+, \decbound^-$, where 
either 
the numerator $g$ or 
the denominator $h$ attains a higher value. 
A classifier $g \oslash h$ 
then separates the data $\data$ into two classes 
{$\data_+ \subseteq \decbound^+$ and $\data_- \subseteq \decbound^-$}.

It was noted in \cite[Proposition 6.1]{zhang18_tropicalgeometrydeep} that $\decbound(g \oslash h)$ is a subcomplex of the tropical hypersurface $\mathcal T(g \oplus h)$. 
{This is observation was also used in the work of \cite{alfarra23_decisionboundariesneural}.} 
We will now make this statement more precise. Consider the tropical signomial $f(\x) = g(\x) \oplus h(\x) = \max_{i \in [N]} (a_i + \inner{\s_i,\x})$, where $N = n+m, a_{n+j} = b_j, \s_{n+j} = \bt_{j}$ for $j \in [m]$.
Each such tropical polynomial divides the input space $\R^d$ into $N$ polyhedral regions, one for each $i^* \in [N]$, which are of the form 
\[
	\mathcal R_{i^*} = \Big\{\x \in \R^d \mid a_{i^*} + \inner{\s_{i^*},\x} =  \max_{i \in [N]} \ (a_i + \inner{\s_i,\x})\Big\}. 
\]
Depending on the coefficients, 
some of these regions might be empty. 
The collection of regions forms a polyhedral subdivision of the input space $\R^d$. 
The collection of $(d-1)$-dimensional cells in this polyhedral subdivision is the \emph{tropical hypersurface} 
\[
\mathcal T(f) = \{\x \in \R^d \mid \text{ the maximum in $f(\x)$ is attained in at least two terms}\}. 
\]
The tropical hypersurface is dual to a regular subdivision of the \emph{Newton polytope} $\newt(f) = \conv(\s_1,\dots,\s_N) \subset \R^d$, which can be obtained in the following way: Consider the lifted polytope $\conv(\sma a_1 \\ \s_1 \strix,\dots,\sma a_N \\ \s_N \strix) \subset \R^{d+1}$. Any facet of the lifted polytope has a unique outer normal vector (up to positive scaling). The upper hull is the collection of facets whose normal vector has a positive entry in the first coordinate. The projection of the upper hull onto the remaining $d$ coordinates forms a subdivision of $\newt(f)$, called a \emph{regular subdivision}. 
The region $\mathcal R_{i^*}$ is the set of linear functionals maximizing the vertex $\sma a_{i^*} \\ \s_{i^*} \strix$ of the lifted Newton polytope, and the intersection $\mathcal R_{i^*} \cap \mathcal R_{j^*}$ is contained in $\mathcal T(f)$ if and only if the pair $\s_{i^*}, \s_{j^*}$ form an edge in the regular subdivision. For more detailed expositions on this duality we refer the reader to 
\cite[Chapter 3.1]{maclagan15_introductiontropicalgeometry} and \cite[Chapter 1.2]{joswig21_essentialstropicalcombinatorics}.

\begin{example}[Tropical Hypersurfaces and Newton Polytopes]\label{ex:hypersurfaces}
    Let $N = 4$ and $d = 2$, i.e.\ $f(\x) = a_1 \odot \x^{\odot\s_1} + a_2 \odot \x^{\odot\s_2} + a_3 \odot \x^{\odot\s_3} + a_4 \odot \x^{\odot\s_4}, $ where $a_i \in \R$ and $\s_i \in \R^2$. The Newton polytope $\newt(f)$ is the convex hull of the $4$ points $\s_1,\s_2,\s_3,\s_4$ in $\R^2$, and is thus either a triangle or a square (except for degenerate cases where $\newt(f)$ is not full dimensional). 
    Choosing generic values $a_1,a_2,a_3,a_4 \in \R$, we obtain three possible regular subdivisions and their respective dual complexes, as shown in \Cref{fig:hypersurfaces}. 
    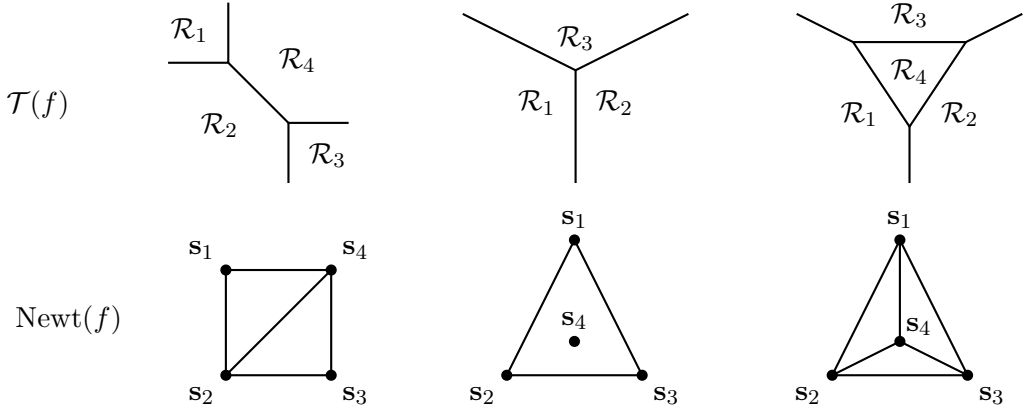
\begin{figure}[tb] 
        \centering
        \raisebox{2.5em}{
$\mathcal T(f)$
}\hspace{2.5em}
\begin{tikzpicture}
	[
	scale=0.8,
	vertex/.style={inner sep=1.2pt,circle,draw=black,fill=black,thick},
	pseudo/.style={inner sep=1.2pt,circle,draw=black,fill=black,thick}
	]
	\draw[thick,color=black] (0,1) -- (0,0) -- (-1,0);
    \draw[thick,color=black] (0,0) -- (1,-1) -- (2,-1);
    \draw[thick,color=black] (1,-1) -- (1,-2);
    \node[label = {below left:{$\mathcal R_2$}}] at (0.5,-.5) {};
    \node[label = {below right:{$\mathcal R_3$}}] at (1,-1) {};
    \node[label = {above right:{$\mathcal R_4$}}] at (0.5,-.5) {};
    \node[label = {above left:{$\mathcal R_1$}}] at (0,0) {};
\end{tikzpicture}
\hspace{3em}
\begin{tikzpicture}
	[
	scale=1.5,
	vertex/.style={inner sep=1.2pt,circle,draw=black,fill=black,thick},
	pseudo/.style={inner sep=1.2pt,circle,draw=black,fill=black,thick}
	]
	\draw[thick,color=black] (-1,0.5) -- (0,0) -- (1,0.5);
    \draw[thick,color=black] (0,0) -- (0,-1);
    \node[label = {below left:{$\mathcal R_1$}}] at (0,0) {};
    \node[label = {below right:{$\mathcal R_2$}}] at (0,0) {};
    \node[label = {above:{$\mathcal R_3$}}] at (0,0) {};
\end{tikzpicture}
\hspace{3em}
\begin{tikzpicture}
	[
	scale=1.5,
	vertex/.style={inner sep=1.2pt,circle,draw=black,fill=black,thick},
	pseudo/.style={inner sep=1.2pt,circle,draw=black,fill=black,thick}
	]
	\draw[thick,color=black] (-1,0.5) -- (0,0) -- (1,0.5);
    \draw[thick,color=black] (0,0) -- (0,-1);
    \draw[thick,color=black,fill=white] (-.5,0.25) -- (0.5,0.25) -- (0,-0.5) -- (-.5,0.25);
    \node[label = {below left:{$\mathcal R_1$}}] at (-.1,-.1) {};
    \node[label = {below right:{$\mathcal R_2$}}] at (.1,-.1) {};
    \node[label = {above:{$\mathcal R_3$}}] at (0,0.2) {};
    \node[label = {{$\mathcal R_4$}}] at (0,-.3) {};
\end{tikzpicture}
\vspace{0.5em}
 \\
 \raisebox{3em}{
$\newt(f)$
}\hspace{1.3em}
\begin{tikzpicture}
	[
	scale=.7,
	point/.style={inner sep=1.2pt,circle,draw=black,fill=black,thick},
    label distance=-0.5mm
	]
	\draw[thick,color=black] (0,0) -- (2,0) -- (2,2) -- (0,2) -- (0,0);
    \draw[thick] (0,0) -- (2,2);
	\node[point, label = {[text=black]below left:{$\s_2$}}] at (0,0) {};
	\node[point, label = {[text=black]below right:{$\s_3$}}] at (2,0) {};
	\node[point, label = {[text=black]above left:{$\s_1$}}] at (0,2) {};
 	\node[point, label = {[text=black]above right:{$\s_4$}}] at (2,2) {};
\end{tikzpicture}\hspace{2.5em}
\begin{tikzpicture}
	[
	scale=.9,
	point/.style={inner sep=1.2pt,circle,draw=black,fill=black,thick},
    label distance=-0.5mm
	]
	\draw[thick,color=black] (-1,-1) -- (0,1) -- (1,-1) -- (-1,-1);
	\node[point, label = {below left:{$\s_2$}}] at (-1,-1) {};
	\node[point, label = {below right:{$\s_3$}}] at (1,-1) {};
	\node[point, label = {above:{$\s_1$}}] at (0,1) {};
 	\node[point, label = {above:{$\s_4$}}] at (0,-0.5) {};
\end{tikzpicture}\hspace{3em}
\begin{tikzpicture}
	[
	scale=.9,
	point/.style={inner sep=1.2pt,circle,draw=black,fill=black,thick},
    label distance=-0.5mm,
	]
	\draw[thick,color=black] (-1,-1) -- (0,1) -- (1,-1) -- (-1,-1);
    \draw[thick] (-1,-1) -- (0,-.5) -- (1,-1);
    \draw[thick] (0,-.5) -- (0,1);
	\node[point, label = {below left:{$\s_2$}}] at (-1,-1) {};
	\node[point, label = {below right:{$\s_3$}}] at (1,-1) {};
	\node[point, label = {above:{$\s_1$}}] at (0,1) {};
 	\node[point, label = {[label distance=-1.5mm]above right:{$\s_4$}}] at (0,-0.5) {};
\end{tikzpicture}
        \caption{The three combinatorial types of generic hypersurfaces defined by tropical polynomials in two variables and $4$ terms (top), 
        and the dual regular subdivisions of their Newton polygons (bottom), as described in \Cref{ex:hypersurfaces}.}
        \label{fig:hypersurfaces}
    \end{figure}
\end{example}
We now extend this duality for understanding tropical rational functions by assigning signs to each region. We have seen that $\mathcal T(g \oplus h)$ divides the input space into $n+m$ regions. 
We call the full-dimensional regions $\mathcal R_i$ with $i \in [n]$, i.e.\ those that correspond to terms of $g$, the \emph{positive regions}. 
The regions $\mathcal R_j, j \in [m]$ corresponding to terms of $h$ are the \emph{negative regions}. 
This terminology stems from the fact that $g(\p) \oslash h(\p) \geq 0 $ if $\p$ lies in a positive region, and $g(\p) \oslash h(\p) \leq 0$ if $\p$ lies in a negative region. 
These regions are dual to vertices in the regular subdivision of the Newton polytope $\newt(g \oplus h)$, i.e.\ to those vertices which lie in the upper hull of the lifted Newton polytope. We equip each such vertex with the corresponding sign, obtaining \emph{positive} and \emph{negative vertices} of the regular subdivision.  An edge of the subdivision is a \emph{sign-mixed edge} if one of its vertices is positive and the other one is negative. Such a sign-mixed edge is dual to the intersection of a positive and a negative region, and thus $g \oslash h \equiv 0$ along this intersection.
We summarize this construction as follows:

\begin{theorem}\label{th:decision-boundary}
The decision boundary $\decbound(g\oslash h)$ is the $(d-1)$-dimensional subcomplex of $\mathcal T(g \oplus h)$ whose maximal regions are dual to sign-mixed edges of the Newton polytope $\newt(g \oplus h)$. 
\end{theorem}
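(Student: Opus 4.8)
The plan is to reduce the statement to the combinatorics of the regular subdivision of $\newt(g\oplus h)$, using the polytope–subdivision duality recalled just before the statement. Write $f = g\oplus h = \max_{k\in[N]}(a_k + \inner{\s_k,\x})$ with $N=n+m$, so that $\s_1,\dots,\s_n$ carry the label \emph{positive} and $\s_{n+1},\dots,\s_N$ carry the label \emph{negative}. Recall that the decomposition of $\R^d$ into the regions $\bigcap_{k\in I}\mathcal R_k$ is dual to the regular subdivision of $\newt(f)$: a cell with active index set $I$ is dual to the face $\conv(\s_k : k\in I)$, with the two dimensions summing to $d$, and $\mathcal T(f)$ is precisely the union of the $(d-1)$-dimensional cells, which are dual to the edges of the subdivision.

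The first step is to establish the set-level identity
\[
    \decbound(g\oslash h) \;=\; \bigcup_{i\in[n],\ j\in[N]\setminus[n]} \bigl(\mathcal R_i\cap\mathcal R_j\bigr).
\]
For ``$\subseteq$'': if $g(\x)=h(\x)$ then $f(\x)=\max(g(\x),h(\x))=g(\x)=h(\x)$, so the maximum defining $f$ is attained simultaneously at some numerator index $i\in[n]$ and some denominator index $j\in[N]\setminus[n]$, i.e.\ $\x\in\mathcal R_i\cap\mathcal R_j$. For ``$\supseteq$'': if $\x\in\mathcal R_i\cap\mathcal R_j$ with $i$ positive and $j$ negative, then $a_i+\inner{\s_i,\x}=a_j+\inner{\s_j,\x}=f(\x)$; since always $g(\x)\le f(\x)$ while $g(\x)\ge a_i+\inner{\s_i,\x}=f(\x)$, we get $g(\x)=f(\x)$, and symmetrically $h(\x)=f(\x)$, so $g(\x)=h(\x)$. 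In particular $\decbound(g\oslash h)\subseteq\mathcal T(g\oplus h)$ and it is a union of cells of $\mathcal T(f)$: a cell with active set $I$ lies in $\decbound(g\oslash h)$ exactly when $I$ contains both a positive and a negative index, equivalently when its dual face $\conv(\s_k:k\in I)$ has at least one positive and at least one negative vertex --- call such a face \emph{sign-mixed}.

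It remains to identify the maximal cells of this subcomplex as the ones dual to sign-mixed \emph{edges}. A cell dual to a sign-mixed edge $\{\s_i,\s_j\}$ lies in $\decbound(g\oslash h)$ by the identity above and is $(d-1)$-dimensional, hence maximal in $\mathcal T(f)$. Conversely, every sign-mixed face $F$ of the subdivision contains a sign-mixed edge: the vertex–edge graph of the polytope $F$ is connected, so a path joining a positive vertex of $F$ to a negative vertex of $F$ must cross an edge $e$ of $F$ whose two endpoints have opposite sign, and $e$, being a face of the face $F$, is itself a face of the subdivision. Dually, the cell of $\mathcal T(f)$ dual to $F$ is a face of the cell dual to $e$. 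Hence every cell of $\decbound(g\oslash h)$ is a face of some cell dual to a sign-mixed edge, while the latter cells are $(d-1)$-dimensional; so $\decbound(g\oslash h)$ is exactly the subcomplex of $\mathcal T(g\oplus h)$ generated by the cells dual to sign-mixed edges, which is pure of dimension $d-1$.

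The only routine input is the polytope/normal-fan duality recalled above; the genuinely combinatorial step is the graph-connectivity argument showing that sign-mixedness always propagates down to an edge, and this is what I expect to be the main point to get right. The one delicate issue to address is degenerate configurations --- coincident $\s_k$ carrying opposite signs, a non-full-dimensional $\newt(f)$, or points $\s_k$ lying on the upper hull but not as vertices of the regular subdivision --- where one should either invoke the convention on the upper hull/genericity or note that such coincidences are absorbed by the duality before applying the argument above.
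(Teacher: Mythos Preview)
Your proposal is correct and follows the same duality-based reasoning the paper uses: the paper in fact does not give a separate proof but presents the theorem as a summary of the discussion immediately preceding it (``We summarize this construction as follows''), relying on the observation that a sign-mixed edge is dual to the intersection of a positive and a negative region, where $g\oslash h\equiv 0$. Your write-up is more detailed than the paper's, in particular the graph-connectivity step showing that every sign-mixed face contains a sign-mixed edge (hence that the subcomplex is pure of dimension $d-1$) is something the paper leaves implicit.
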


This statement is already implicit in earlier literature on real and positive tropicalization.
\label{rmk:positive-tropicalization}
     If $g,h$ are tropical polynomials, then the decision boundary $\decbound(g\oslash h)$ is the tropicalization of the intersection of the positive orthant with a (family of) hypersurface(s). Such a  hypersurface is defined through any polynomial $G(\x) - H(\x)$ such that $\trop(G) = g, \trop(H)=h$ and both $G$ and $H$ have only nonnegative coefficients. These connections between decision boundaries and (tropical) algebraic geometry are, as far as we know, widely unexplored. For more details on tropical positivity we refer the reader to \cite{speyer_tropicaltotallypositive,Viro:2006,tropicalpositivitydeterminantal}. For an introduction to tropicalization of polynomials and hypersurfaces, we recommend \cite[Chapter 2]{joswig21_essentialstropicalcombinatorics} and \cite[Chapter 3.1]{maclagan15_introductiontropicalgeometry}.

\begin{example}[Decision Boundaries and Sign-Mixed Subcomplexes]\label{ex:signed-sectors}
    Let $d = 2$ and let $g = a_1 \odot x^{\odot\s_1} 
    {\oplus}
    a_2 \odot x^{\odot\s_2}$ and $h = b_1 \odot x^{\odot \bt_1} 
    {\oplus} 
    b_2 \odot x^{\odot \bt_2}$. The polynomial $g \oplus h$ consists of $N=4$ terms, as the one in \Cref{ex:hypersurfaces}. \Cref{fig:signed-sectors} shows all nondegenerate combinatorial possibilities of positive and negative regions, where a ``$+$'' indicates a region or vertex corresponding to a term of $g$, and a ``$-$'' indicates a region or vertex corresponding to a term of $h$. 
The decision boundary is the $1$-dimensional subcomplex of $\mathcal T(g\oplus h$) consisting of line segments and rays incident to a positive and a negative region. 
    \Cref{fig:signed-sectors} also shows a configuration where $g$ consists of $n=3$ terms and $h$ is a monomial ($m=1$). There, the decision boundary separates the space into a bounded (negative) cell and its (positive) complement. 
    \begin{figure}[tb]
        \centering
        \begin{tikzpicture} 
    \node at (0,2.5) {
        \begin{tikzpicture}
        	[
        	scale=0.8,
        	vertex/.style={inner sep=1.2pt,circle,draw=black,fill=black,thick},
        	]
            \draw[line width=4pt, color=MidnightBlue!40] (-1,0) -- (0,0) -- (1,-1) -- (2,-1); 
        	\draw[thick,color=black] (0,1) -- (0,0) -- (-1,0);
            \draw[thick,color=black] (0,0) -- (1,-1) -- (2,-1);
            \draw[thick,color=black] (1,-1) -- (1,-2);
            \node[label = {below left:{$-$}}] at (0.5,-.5) {};
            \node[label = {below right:{$-$}}] at (1,-1) {};
            \node[label = {above right:{$+$}}] at (0.5,-.5) {};
            \node[label = {above left:{$+$}}] at (0,0) {};
        \end{tikzpicture}
    };
    \node at (0,0) {
        \begin{tikzpicture}
        	[
        	scale=.65,
        	point/.style={inner sep=1.2pt,circle,draw=black,fill=black,thick},
            label distance=-1.5mm,
        	]
            \draw[line width=4pt, rounded corners, color=MidnightBlue!40] (0,2) -- (0,0) -- (2,2) -- (2,0); 
        	\draw[thick,color=black] (0,0) -- (2,0) -- (2,2) -- (0,2) -- (0,0);
            \draw[thick] (0,0) -- (2,2);
        	\node[point, label = {[text=black]below left:{$-$}}] at (0,0) {};
        	\node[point, label = {[text=black]below right:{$-$}}] at (2,0) {};
        	\node[point, label = {[text=black]above left:{$+$}}] at (0,2) {};
         	\node[point, label = {[text=black]above right:{$+$}}] at (2,2) {};
        \end{tikzpicture}
    };
    \draw[thick,color=black!50] (1.5,-1.2) -- (1.5,3.8);
\end{tikzpicture}
\begin{tikzpicture} 
    \node at (0,2.5) {
        \begin{tikzpicture}
        	[
        	scale=0.8,
        	vertex/.style={inner sep=1.2pt,circle,draw=black,fill=black,thick},
        	]
            \draw[line width=4pt, color=MidnightBlue!40] (-1,0) -- (0,0) -- (0,1);
            \draw[line width=4pt, color=MidnightBlue!40] (2,-1) -- (1,-1) -- (1,-2);
        	\draw[thick,color=black] (0,1) -- (0,0) -- (-1,0);
            \draw[thick,color=black] (0,0) -- (1,-1) -- (2,-1);
            \draw[thick,color=black] (1,-1) -- (1,-2);
            \node[label = {below left:{$-$}}] at (0.5,-.5) {};
            \node[label = {below right:{$+$}}] at (1,-1) {};
            \node[label = {above right:{$-$}}] at (0.5,-.5) {};
            \node[label = {above left:{$+$}}] at (0,0) {};
        \end{tikzpicture}
    };
    \node at (0,0) {
        \begin{tikzpicture}
        	[
        	scale=.65,
        	point/.style={inner sep=1.2pt,circle,draw=black,fill=black,thick},
            label distance=-1.5mm,
        	]
            \draw[line width=4pt, color=MidnightBlue!40] (0,0) -- (2,0) -- (2,2) -- (0,2) -- (0,0);
        	\draw[thick,color=black] (0,0) -- (2,0) -- (2,2) -- (0,2) -- (0,0);
            \draw[thick] (0,0) -- (2,2);
        	\node[point, label = {[text=black]below left:{$-$}}] at (0,0) {};
        	\node[point, label = {[text=black]below right:{$+$}}] at (2,0) {};
        	\node[point, label = {[text=black]above left:{$+$}}] at (0,2) {};
         	\node[point, label = {[text=black]above right:{$-$}}] at (2,2) {};
        \end{tikzpicture}
    };
    \draw[thick,color=black!50] (1.5,-1.2) -- (1.5,3.8);
\end{tikzpicture}
\begin{tikzpicture} 
    \node at (0,2.5) {
        \begin{tikzpicture}
        	[
        	scale=1.3,
        	vertex/.style={inner sep=1.2pt,circle,draw=black,fill=black,thick},
        	pseudo/.style={inner sep=1.2pt,circle,draw=black,fill=black,thick}
        	]
             \draw[line width=4pt, color=MidnightBlue!40] (-1,0.5) -- (0,0) -- (1,0.5);
        	\draw[thick,color=black] (-1,0.5) -- (0,0) -- (1,0.5);
            \draw[thick,color=black] (0,0) -- (0,-1);
            \node[label = {below left:{$-$}}] at (0,0) {};
            \node[label = {below right:{$-$}}] at (0,0) {};
            \node[label = {above:{$+$}}] at (0,0) {};
        \end{tikzpicture}
    };
    \node at (0,0) {
        \begin{tikzpicture}
        	[
        	scale=.7,
        	point/.style={inner sep=1.2pt,circle,draw=black,fill=black,thick},
        	]
            \draw[line width=4pt, color=MidnightBlue!40, rounded corners] (-1,-1) -- (0,1) -- (1,-1);
        	\draw[thick,color=black] (-1,-1) -- (0,1) -- (1,-1) -- (-1,-1);
        	\node[point, label = {[label distance=-1.5mm]below left:{$-$}}] at (-1,-1) {};
        	\node[point, label = {[label distance=-1.5mm]below right:{$-$}}] at (1,-1) {};
        	\node[point, label = {above:{$+$}}] at (0,1) {};
         	\node[point, label = {above:{$+$}}] at (0,-0.5) {};
        \end{tikzpicture}
    };
    \draw[thick,color=black!50] (1.5,-1.2) -- (1.5,3.8);
\end{tikzpicture}
\begin{tikzpicture} 
    \node at (0,2.5) {
        \begin{tikzpicture}
        	[
        	scale=1.3,
        	vertex/.style={inner sep=1.2pt,circle,draw=black,fill=black,thick},
        	pseudo/.style={inner sep=1.2pt,circle,draw=black,fill=black,thick}
        	]
            \draw[line width=4pt, color=MidnightBlue!40] (-1,.5) -- (-.5,.25) -- (0,-.5) -- (0.5,.25) -- (1,.5);
        	\draw[thick,color=black] (-1,0.5) -- (-.5,.25);
            \draw[thick,color=black] (.5,.25) -- (1,0.5);
            \draw[thick,color=black] (0,-.5) -- (0,-1);
            \draw[thick,color=black] (-.5,0.25) -- (0.5,0.25) -- (0,-0.5) -- (-.5,0.25);
            \node[label = {below left:{$-$}}] at (-.1,-.1) {};
            \node[label = {below right:{$-$}}] at (.1,-.1) {};
            \node[label = {above:{$+$}}] at (0,0.2) {};
            \node[label = {{$+$}}] at (0,-.3) {};
        \end{tikzpicture}
    };
    \node at (0,0) {
        \begin{tikzpicture}
        	[
        	scale=.7,
        	point/.style={inner sep=1.2pt,circle,draw=black,fill=black,thick},
        	]
            \draw[line width=4pt, rounded corners, color=MidnightBlue!40] (-1,-1) -- (0,-.5) -- (1,-1) -- (0,1) -- (-1,-1);
        	\draw[thick,color=black] (-1,-1) -- (0,1) -- (1,-1) -- (-1,-1);
            \draw[thick] (-1,-1) -- (0,-.5) -- (1,-1);
            \draw[thick] (0,-.5) -- (0,1);
        	\node[point, label = {[label distance=-1.5mm]below left:{$-$}}] at (-1,-1) {};
        	\node[point, label = {[label distance=-1.5mm]below right:{$-$}}] at (1,-1) {};
        	\node[point, label = {above:{$+$}}] at (0,1) {};
         	\node[point, label = {[label distance=-1.7mm]above right:{$+$}}] at (0,-0.5) {};
        \end{tikzpicture}
    };
    \draw[thick,double,color=black!50] (1.5,-1.2) -- (1.5,3.8);
\end{tikzpicture}
\begin{tikzpicture} 
    \node at (0,2.5) {
        \begin{tikzpicture}
        	[
        	scale=1.3,
        	vertex/.style={inner sep=1.2pt,circle,draw=black,fill=black,thick},
        	pseudo/.style={inner sep=1.2pt,circle,draw=black,fill=black,thick}
        	]
        	\draw[thick,color=black] (-1,0.5) -- (0,0) -- (1,0.5);
            \draw[thick,color=black] (0,0) -- (0,-1);
            \draw[line width=4pt, color=orange!40,fill=white] (-.5,0.25) -- (0.5,0.25) -- (0,-0.5) -- (-.5,0.25) -- cycle;
            \draw[thick,color=black] (-.5,0.25) -- (0.5,0.25) -- (0,-0.5) -- (-.5,0.25);
            \node[label = {below left:{$+$}}] at (-.1,-.1) {};
            \node[label = {below right:{$+$}}] at (.1,-.1) {};
            \node[label = {above:{$+$}}] at (0,0.2) {};
            \node[label = {{$-$}}] at (0,-.3) {};
        \end{tikzpicture}
    };
    \node at (0,0) {
        \begin{tikzpicture}
        	[
        	scale=.7,
        	point/.style={inner sep=1.2pt,circle,draw=black,fill=black,thick},
        	]
            \draw[line width=4pt, color=orange!40] (-1,-1) -- (0,-.5) -- (1,-1);
            \draw[line width=4pt, color=orange!40] (0,-.5) -- (0,1);
        	\draw[thick,color=black] (-1,-1) -- (0,1) -- (1,-1) -- (-1,-1);
            \draw[thick] (-1,-1) -- (0,-.5) -- (1,-1);
            \draw[thick] (0,-.5) -- (0,1);
        	\node[point, label = {[label distance=-1.5mm]below left:{$+$}}] at (-1,-1) {};
        	\node[point, label = {[label distance=-1.5mm]below right:{$+$}}] at (1,-1) {};
        	\node[point, label = {above:{$+$}}] at (0,1) {};
         	\node[point, label = {[label distance=-1.7mm]above right:{$-$}}] at (0,-0.5) {};
        \end{tikzpicture}
    };
\end{tikzpicture}
        \caption{All combinatorial possibilities of positive and negative regions of $\mathcal T(g\oplus h)$ for $n=m=2$, up to switching ``$+$'' and ``$-$'', as explained in \Cref{ex:signed-sectors}. The decision boundary $\decbound(g \oslash h)$ (top) and its dual complex (bottom) are highlighted in blue. 
        The right most column shows configurations of three positive regions and one negative region ($n=3, m=1$). The decision boundary is highlighted in orange.}
        \label{fig:signed-sectors}
    \end{figure}
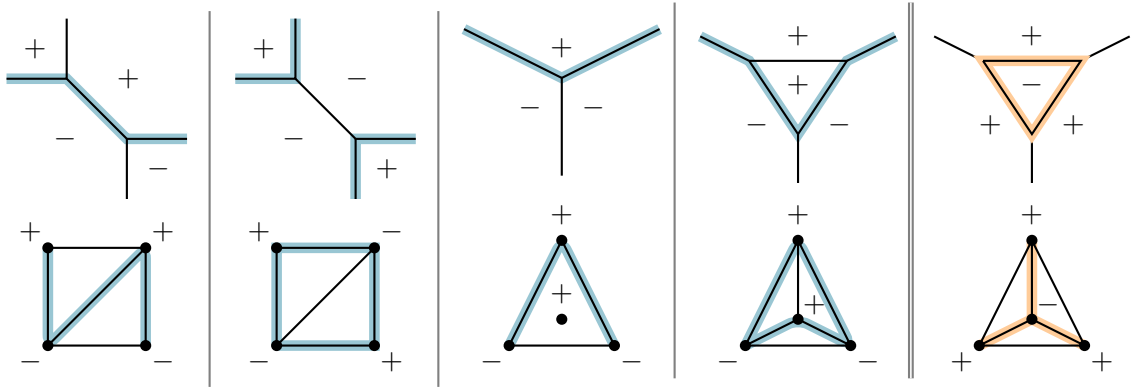
\end{example}

The main focus of this article is the parameter space $\pspace$ of tropical rational functions with $n$ terms in the numerator and $m$ terms in the denominator.
It is thus natural to ask about the geometry of the set of parameters such that the decision boundary has a fixed combinatorial type. 
The decision boundary is a polyhedral complex. The \emph{combinatorial type} of a polyhedral complex captures the combinatorics of the intersections and inclusions of its faces. Formally, it is defined as the isomorphism class of the partially ordered set of faces, ordered by inclusion. 

By the discussion above, the decision boundary is dual to a subcomplex of a Newton polytope, so the set of parameters which gives a fixed combinatorial type of decision boundaries is subdivided into multiple (but finitely many) smaller sets, one for each combinatorial type of regular subdivisions of Newton polytopes defined by $n+m$ monomials. 
Each such smaller set is itself determined by the combinatorial type of the lifted Newton polytope. The set of parameters such that the Newton polytope has a fixed combinatorial type is the \emph{realization space} of the lifted polytope, and the space of parameters $\pspace$ can be stratified into these realization spaces. 
It is known that such a realization space is a semialgebraic set, i.e.\ a finite union and intersection of solution sets to polynomial inequalities. We thus obtain the following result. 

\begin{theorem}\label{th:realization-spaces}
    The parameter space 
    {$\pspace$} 
    of tropical rational functions is stratified into semialgebraic sets, one for each combinatorial type of regular subdivisions of $n+m$ points in $d$-dimensional input space.
\end{theorem}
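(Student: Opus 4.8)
The plan is to turn the outline sketched just before the statement into an argument: (i) describe the induced regular subdivision as an explicit function of a labelled point configuration attached to the parameter, (ii) observe that there are only finitely many possible combinatorial types, and (iii) show each type cuts out a semialgebraic subset of $\pspace$ by exhibiting its membership condition as a first-order formula over the reals. First I would attach to $\parameter = (a_1,\s_1,\dots,a_n,\s_n,b_1,\bt_1,\dots,b_m,\bt_m)\in\pspace$ the labelled configuration of $N=n+m$ lifted points $\widehat A(\parameter)=\bigl((a_i,\s_i)\bigr)_{i\in[N]}\subset\R^{d+1}$, where $(a_{n+j},\s_{n+j})=(b_j,\bt_j)$. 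By the discussion preceding the theorem, the regular subdivision $\Delta(\parameter)$ of $\{\s_1,\dots,\s_N\}\subset\R^d$ induced by the heights $(a_i)$ is the projection to $\R^d$ of the upper hull of $\conv\widehat A(\parameter)$, and its combinatorial type is recorded by the collection
\[
    \Delta(\parameter)\;=\;\Bigl\{\,\sigma\subseteq[N]\;\Bigm|\;\exists\,\x\in\R^d \text{ with } \sigma=\argmax_{i\in[N]}\bigl(a_i+\inner{\s_i,\x}\bigr)\Bigr\}.
\]
I would then declare $\parameter\sim\parameter'$ when $\Delta(\parameter)=\Delta(\parameter')$ as subsets of $2^{[N]}$. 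Since $2^{[N]}$ is finite, only finitely many $\mathcal F\subseteq 2^{[N]}$ occur, so there are finitely many classes; every $\parameter$ lies in exactly one, so the nonempty classes partition $\pspace$ and are in bijection with the realizable combinatorial types of regular subdivisions of $N=n+m$ labelled points in $\R^d$, as claimed. (Indexing instead by the isomorphism type of the face poset of $\Delta(\parameter)$ merges finitely many of these classes; since a finite union of semialgebraic sets is semialgebraic, nothing changes.)

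It then remains to prove that for each fixed $\mathcal F\subseteq 2^{[N]}$ the set $S_{\mathcal F}=\{\parameter\in\pspace\mid\Delta(\parameter)=\mathcal F\}$ is semialgebraic. For a single $\sigma\subseteq[N]$ the condition ``$\sigma\in\Delta(\parameter)$'' reads
\[
    \exists\,\x\in\R^d,\ \exists\,c\in\R:\quad\Bigl(\textstyle\bigwedge_{i\in\sigma} a_i+\inner{\s_i,\x}=c\Bigr)\ \wedge\ \Bigl(\textstyle\bigwedge_{j\notin\sigma} a_j+\inner{\s_j,\x}<c\Bigr),
\]
a formula in the first-order language of ordered fields whose atoms are polynomial (indeed bilinear) in the quantified variables $(\x,c)$ and the entries of $\parameter$; by the Tarski--Seidenberg theorem the set of $\parameter$ satisfying it is semialgebraic. (Eliminating $(\x,c)$ by hand turns this into sign conditions on the maximal minors of the matrix with rows $(1,a_i,\s_i)$, which is exactly the statement that the stratum is a \emph{realization space} of the upper hull of $\widehat A(\parameter)$ with its marked faces, known to be semialgebraic; equivalently one may stratify by the chirotope of $\widehat A(\parameter)$ together with the signs recording which facets lie on the upper hull, each such stratum being by definition a realization space of an oriented matroid and the chirotope determining $\Delta(\parameter)$ combinatorially.) Finally,
\[
    S_{\mathcal F}\;=\;\Bigl(\bigcap_{\sigma\in\mathcal F}\{\parameter\mid\sigma\in\Delta(\parameter)\}\Bigr)\ \cap\ \Bigl(\bigcap_{\sigma\in 2^{[N]}\setminus\mathcal F}\{\parameter\mid\sigma\notin\Delta(\parameter)\}\Bigr)
\]
is a finite Boolean combination of semialgebraic sets, hence semialgebraic, completing the argument.

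The mathematical content here is light --- it is essentially the classical fact that realization spaces of marked polytopes and of regular subdivisions are semialgebraic --- so I expect the only real work, and the main obstacle, to be bookkeeping: pinning down once and for all exactly what data ``the combinatorial type of a regular subdivision'' is meant to record (the labelled cells, the vertex set and face lattice of each cell, and which of the $N$ points are used at all), and checking that each such datum is first-order expressible in $\parameter$ --- some of it through the argmax-set formula above, the rest through sign conditions on minors. One must also make sure the argument is insensitive to the degenerate parameters the statement does not exclude: coinciding exponent vectors $\s_i=\s_j$, point sets $\{\s_i\}$ not in convex position, and heights placing a lifted point strictly below the upper hull; the existential formula handles all of these uniformly, since such a point simply satisfies the relevant strict inequalities and appears in no $\sigma\in\mathcal F$. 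Finally I would note that, unlike the polyhedral fan of \Cref{sec:rational-classification}, the strata $S_{\mathcal F}$ need not be relatively open, so the theorem asserts no more than a partition into semialgebraic pieces.
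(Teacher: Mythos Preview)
Your proposal is correct and follows the same line as the paper, only in considerably more detail: the paper does not give a formal proof but simply observes, in the paragraph preceding the theorem, that the combinatorial type of the regular subdivision is the combinatorial type of the upper hull of the lifted Newton polytope, that the parameters realizing a fixed combinatorial type form the realization space of that polytope, and that such realization spaces are known to be semialgebraic. Your argument via Tarski--Seidenberg on the argmax formulation is exactly a self-contained unpacking of that known fact, and you even note the realization-space/chirotope viewpoint as an alternative.
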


However, the Universality Theorem of Realization Spaces certifies that realization spaces of polytopes can be arbitrarily complicated \cite{richtergebert95_realizationspaces4}. Moreover, observe that this stratification into semialgebraic sets is completely independent of the data set $\data$. 
In \Cref{sec:activation-fan-polytope} we will introduce the activation fan, which is a polyhedral fan subdividing the parameter space, and its geometry heavily depends on the geometry of the data. The aforementioned semialgebraic sets thus may or may not intersect cones in the activation fan. In other words, these are distinct stratifications of the parameter space whose structures are incompatible with one another.

\begin{example}[Realization Space of Decision Boundaries]
    We consider a $1$-dimensional input space $d =1$, where $g(\x) = \max(a_1 + \inner{\s_1,x}, a_2 + \inner{\s_2,x})$ and $h(\x) = b_1 + \inner{\bt_1,\x}$. We 
    describe the semialgebraic nature of the realization space in $(d+1)(n+m)=6$-dimensional parameter space $\pspace[1,2,1]$.
    The Newton polytope $\newt(g \oplus h)$ is a line segment which is the convex hull of $3$ points {$\s_1,\s_2,\bt_1\in\mathbb{R}^1$} 
    (dual to two positive regions and one negative region), and the decision boundary consists of at most $2$ distinct points. 
    We want to characterize the set of parameters such that $\decbound(g \oslash h)$ consists of two distinct points. In terms of the Newton polytope, this is the case when $\bt_1 \in \interior(\conv(\s_1,\s_2))$ and
    $\sma b_1 \\ \bt_1 \strix$ lies above the lifted line segment $\conv(\sma a_1 \\ \s_1 \strix, \sma a_2 \\ \s_2 \strix)$, so that $\newt(g \oplus h)$ is subdivided into the two line segments $\conv(\s_1,\bt_1)$ and $\conv(\bt_1,\s_2)$. 
    A subdivision into two line segments arises if and only if there exists a $0<\lambda < 1$ such that
    $    \lambda \s_{1} + (1-\lambda) \s_{2} = \bt_{1}, $
    and 
    $    \lambda a_{1} + (1-\lambda) a_{2} < b_{1}.
    $
    The semialgebraic set in parameter space is the coordinate projection of the set of vectors $(a_1, \s_1, a_2, \s_2, a_3, \s_3, \lambda) \in \R^7$ satisfying the above inequalities, by projecting away the $\lambda$-coordinate. 
   Concretely, the coordinate projection is the semialgebraic set 
    \begin{align*}
       \mathcal S = &\{ (a_1, \s_1, a_2, \s_2, b_1, \bt_1) \mid \s_{1} < \bt_{1} < \s_{2}, a_{1}(\bt_{1}-\s_{2}) + a_{2}(\s_{1}-\bt_{1}) < b_{1}(\s_{1}-\s_{2}) \} \\
        \cup  &\{ (a_1, \s_1, a_2, \s_2, b_1, \bt_1) \mid \s_{2} < \bt_{1} < \s_{1}, a_{2}(\bt_{1}-\s_{1}) + a_{1}(\s_{2}-\bt_{1}) < b_{1}(\s_{2}-\s_{1}) \},
    \end{align*}
    which is defined by unions and intersections of linear and quadratic polynomial inequalities.
    The parameters defining decision boundaries consisting of a single point are $\pspace[1,2,1] \setminus \mathcal S$. 
\end{example}

\section{
Activation Fan and Activation Polytope} 
\label{sec:activation-fan-polytope}

In \Cref{sec:linear-classification} we have seen that the parameter space of linear classifiers allows for a subdivision induced by a hyperplane arrangement. This subdivision is the normal fan of a zonotope, and its cells correspond to maximal covectors of an oriented matroid. In this section we make a first step towards a generalization of this theory for continuous piecewise linear functions. Following the ideas from \Cref{sec:decision-boundaries}, we consider subdivisions of the input space into $n+m$ regions, which are induced by tropical hypersurfaces. This allows us to introduce the \emph{activation fan}, a polyhedral fan which is the normal fan of the \emph{activation polytope}. 
As an analogue to covectors, we label the cones in this fan by bipartite graphs, called \emph{activation patterns}.
In the later \Cref{sec:rational-classification} we will then assign signs to obtain the full analogue for tropical rational functions.

In \Cref{sec:activation-structure} we introduce the concepts and investigate the general combinatorial and geometric structure of these objects. In \Cref{sec:recover-oms} we relate these to known concepts, namely oriented matroids and tropical oriented matroids.

\subsection{General Structure}\label{sec:activation-structure}

Let $\data \subseteq \R^d$ be a fixed finite data set. 
In \Cref{sec:piecewise-linear-functions} we have seen that to any vector of parameters $\parameter \in \pspace$ we can associate a tropical rational function $g_\parameter \oslash h_\parameter$, which induces a classification of the data set. Our goal is to understand the underlying combinatorics of this separation in parameter space. In \Cref{sec:decision-boundaries} we have seen that the combinatorics of this separation is determined by the combinatorics of the tropical hypersurface $\mathcal T(g \oplus h)$. Here, $g \oplus h$ is a tropical signomial with $n+m$ terms and $\mathcal T(g \oplus h)$ separates the data into $n+m$ classes. We devote this section to the study of the combinatorics of tropical signomials with $N=n+m$ terms within their parameter space. Later, in \Cref{sec:rational-classification} we will extend these considerations to tropical rational functions.

Consider a fixed finite data set $\data = \{\p_1,\dots,\p_M\} \subset \R^d$ and denote a tropical signomial $f\colon \R^d \to \R$, $f(\x) = \max_{i \in [N]} (a_i + \inner{\s_i, \x})$, which is uniquely defined by its parameter vector $\theta = (a_1,\s_1,\dots,a_N,\s_N)$. 
We denote by $\pspace[d,N]$ the $N(d+1)$-dimensional parameter space of tropical signomials with $N$ terms. Given a vector of parameters $\parameter \in \pspace[d,N]$, we write $f_\theta$ for the corresponding function. 
For a graph $G = (V(G),E(G))$ and a node $v \in V(G)$ we denote the neighborhood of $v$ by $N(v;G) = \{w \in V(G) \mid vw \in E(G)\}$. For two graphs $G=(V,E), G'=(V,E')$ on the same set of nodes, we write $G \cup G'$ for the graph with nodes $V$ and edges $E \cup E'$.

\begin{definition}[Activation Pattern of Tropical Signomial]
    The data point $\p_j \in \data$ \emph{activates} the $(i^*)^{\text{th}}$ term of the tropical signomial $f(\x) = \max_{i \in [N]} (a_i + \inner{\s_i, \x})$ if $f(\p_j) = a_{i^*} + \inner{\s_{i^*}, \x}$.
    {The degree of a vertex $\p$, $\deg(\p) = |N(\p;G)|$, is the number of terms that it activates.} 
The \emph{activation pattern} of $(f,\data)$ is the bipartite graph $G=(V(G),E(G))$ on nodes $V(G) = \data \sqcup [N]$ with edges $E(G) = \{\p i \mid \p \text{ activates the $i^{\text{th}}$ term of $f$}\}$. 
    The \emph{activation cone} of a fixed bipartite graph $G=(\data \sqcup [N],E(G))$ is the polyhedral cone
    \[
    	\acone = 
     \operatorname{cl}\left(\big\{\parameter = (a_1,\s_1,\dots,a_N,\s_N) \in \pspace[d,N] \mid 
     G \text{ is the activation pattern of } (f_{\parameter},\data)
    	 \big\}\right)
    \]
     in the parameter space of tropical signomials, where $\operatorname{cl}$ denotes the Euclidean closure. 
\end{definition}

In the notation of \Cref{sec:decision-boundaries}, the point $\p$ activates the $(i^*)^{\text{th}}$ term if and only if $\p \in \mathcal R_{i^*}$, where $\mathcal R_{i^*}$ is a maximal region of the subdivision of $\R^d$ induced by the tropical hypersurface $\mathcal T(f)$. The activation cone $\acone$ is thus the space of tropical signomials where each data point $\p$ lies in a fixed set of regions, namely in regions indexed by $N(\p;G)$. The activation pattern may be thought of as a multivalued generalization of a covector of an oriented matroid, and the corresponding activation cone serves as an analogue of a chamber in the hyperplane arrangement from \Cref{sec:linear-classification}.

\begin{example}[Activation Patterns]\label{ex:bipartite-graphs}
    Let $\data = \{\p_1,\p_2\}\subset \R^2$ with $\p_1 = (0,0), \p_2 = (1,0)$, let $N=4$ and $G$ be the activation pattern on nodes $V(G) = \data \sqcup \{i_1,i_2,i_3,i_4\}$ with edges $E(G) = \{ (\p_1,i_1), (\p_1,i_2), (\p_2,i_3)  \}$. This graph represents the set of tropical signomials in $d=2$ variables with $N=4$ terms such that the point $\p_1$ lies in the intersection of the regions $\mathcal{R}_{i_1},\mathcal{R}_{i_2}$ corresponding to the first and second term, and the point $\p_2$ lies in the region $\mathcal R_{i_3}$. \Cref{fig:bipartite graphs} shows some tropical hypersurfaces $\mathcal T(f_\parameter)$ and dual subdivisions of $\newt(f_\parameter)$ for $\parameter \in \acone$.
    More explicitly, the parameter of the leftmost tropical hypersurface can be chosen as
    $
        \theta = (0,-1,1, \ 0,0,0, \ -1,1.5,0.5, \ -2,0,2).
    $
    Recall from \Cref{th:realization-spaces} that the parameter space is subdivided into semialgebraic sets, one for each combinatorial type of subdivisions of $\newt(f_\parameter)$. This example also shows that the cone $\acone$ intersects all such semialgebraic sets nontrivially.
        \begin{figure}[t]
        \centering
     	\begin{subfigure}[b]{0.25\textwidth}
     		\centering
     		 \begin{tikzpicture}
	[
	scale=0.8,
	p/.style={inner sep=1.2pt,circle,draw=black,fill=black,thick,label distance=-1.5mm},
	]
	\node[p,label=left:{$\p_1$}] (p1) at (0,2.5) {};
	\node[p,label=left:{$\p_2$}] (p2) at (0,0.5) {};
	\node[p,label=right:{$i_1$}] (i1) at (2,3) {};
	\node[p,label=right:{$i_2$}] (i2) at (2,2) {};
	\node[p,label=right:{$i_3$}] (i3) at (2,1) {};
	\node[p,label=right:{$i_4$}] (i4) at (2,0) {};
	\draw[thick] (p1) -- (i1);
	\draw[thick] (p1) -- (i2);
	\draw[thick] (p2) -- (i3);
	\node at (0,-2) {};
\end{tikzpicture}
     		 \caption{Activation pattern.}
     	\end{subfigure}
		\begin{subfigure}[b]{0.74\textwidth}
			\centering
        	\begin{tabular}{@{\hskip 1em}c@{\hskip 1em}@{\hskip 1em} c @{\hskip 1em}@{\hskip 1em}  c}
	\begin{tikzpicture}
		[
		scale=0.7,
		p/.style={inner sep=1.2pt,circle,draw=black,fill=black,thick,label distance=-1.5mm},
		]
		\node[p,label={$\p_1$}] (p1) at (0,0) {};
		\node[p,label={$\p_2$}] (p2) at (2,0) {};
		\draw[thick] (-.5,-.5) -- (1,1) -- (1.5,-.5);
		\draw[thick] (1,1) -- (1.25,2.25); 
		\draw[thick] (.25,3.25) -- (1.25,2.25) -- (2.25,2.75);
		\node at (0.8,-.2) {${i_2}$};
		\node at (2,1.5) {${i_3}$};
		\node at (0.3,1.5) {${i_1}$};
		\node at (1.25,3) {${i_4}$};
	\end{tikzpicture}
	&
	\begin{tikzpicture}
		[
		scale=0.9,
		p/.style={inner sep=1.2pt,circle,draw=black,fill=black,thick,label distance=-1.5mm},
		]
		\node[p,label={$\p_1$}] (p1) at (.75,0.5) {};
		\node[p,label=below:{$\p_2$}] (p2) at (2,0.5) {};
		\draw[thick] (-.5,-.5) -- (0.5,0.5)  -- (1.16,.5) -- (1.5,-.5);
		\draw[thick] (.25-0.75,3.25-1.75) -- (1.25-0.75,2.25-1.75);
		\draw[thick]  (1.25-0.09,2.25-1.75) -- (2.25-0.09,2.75-1.75);
		\node at (0.8,-.2) {${i_2}$};
		\node at (-0.3,.5) {${i_4}$};
		\node at (1,1.3) {${i_1}$};
		\node at (0,-1) {};
	\end{tikzpicture}
	&
	  \begin{tikzpicture}
		[
		scale=1.3,
		vertex/.style={inner sep=1.2pt,circle,draw=black,fill=black,thick},
		pseudo/.style={inner sep=1.2pt,circle,draw=black,fill=black,thick}
		]
		\draw[thick,color=black] (1,-0.5) -- (.5,-.25);
		\draw[thick,color=black] (-.5,-.25) -- (-1,-0.5);
		\draw[thick,color=black] (0,.5) -- (0,1);
		\draw[thick,color=black] (.5,-0.25) -- (-0.5,-0.25) -- (0,0.5) -- (.5,-0.25);
		\node[label = {above right:{${i_3}$}}] at (.1,.1) {};
		\node[label = {above left:{${i_2}$}}] at (-.1,.1) {};
		\node[label = {below:{${i_1}$}}] at (0,-0.2) {};
		\node[label = {below:{${i_4}$}}] at (0,.3) {};
		\node[vertex,label={$\p_1$}] (p1) at (1.5*-.5,1.5*-.25) {};
		\node[vertex,label=above:{$\p_2$}] (p2) at (1,1.5*-.25) {};
	\end{tikzpicture}
	\\  
	\begin{tikzpicture}
		[
		scale=0.7,
		p/.style={inner sep=1.2pt,circle,draw=black,fill=black,thick,label distance=-1.5mm},
		]
		\draw[thick] (-1,1) -- (0,0) -- (1.5,.5) -- (0,2) -- (-1,1);
		\draw[thick] (-1,1) -- (1.5,.5); 
		\node[p,label=left:{$i_1$}] at (-1,1) {};
		\node[p,label=below:{$i_2$}] at (0,0) {};
		\node[p,label=right:{$i_3$}] at (1.5,.5) {};
		\node[p,label={$i_4$}] at (0,2) {};
	\end{tikzpicture}
	&
	\begin{tikzpicture}
		[
		scale=0.7,
		p/.style={inner sep=1.2pt,circle,draw=black,fill=black,thick,label distance=-1.5mm},
		]
		\draw[thick] (-1,1) -- (0,0) -- (1.5,.5) -- (0,2) -- (-1,1);
		\draw[thick] (0,0) -- (0,2); 
		\node[p,label=left:{$i_4$}] at (-1,1) {};
		\node[p,label=below:{$i_2$}] at (0,0) {};
		\node[p,label=right:{$i_3$}] at (1.5,.5) {};
		\node[p,label={$i_1$}] at (0,2) {};
	\end{tikzpicture}
	&
	\begin{tikzpicture}
		[
		scale=.8,
		point/.style={inner sep=1.2pt,circle,draw=black,fill=black,thick},
		label distance=-0.5mm
		]
		\draw[thick,color=black] (1,1) -- (0,-1) -- (-1,1) -- (1,1);
		\draw[thick] (1,1) -- (0,.5) -- (-1,1);
		\draw[thick] (0,.5) -- (0,-1);
		\node[point, label = {above right:{$i_3$}}] at (1,1) {};
		\node[point, label = {above left:{$i_2$}}] at (-1,1) {};
		\node[point, label = {below:{$i_1$}}] at (0,-1) {};
		\node[point, label = {[label distance=-.8mm]below right:{$i_4$}}] at (0,0.5) {};
	\end{tikzpicture}
\end{tabular}
        	\caption{Subdivisions in input space (top) and dual Newton polytopes (bottom).}
        \end{subfigure}
        
        \caption{The activation pattern $G$ from \Cref{ex:bipartite-graphs}, partitions $\mathcal T(f_\parameter)$ of the input space 
        {for three different values of} 
        $\parameter \in \acone$ and dual subdivisions of Newton polytopes $\newt(f_\parameter)$.}
        \label{fig:bipartite graphs}
    \end{figure}
\end{example}

Note that, depending on the geometry of the data $\data$, the cone $\acone$ may be empty. On the other hand, every bipartite graph $H=(\data \sqcup [N],E(H))$ defines a polyhedral cone
\begin{align*}
	\tilde{C}_\data(H) = \big\{\parameter = (a_1,\s_1,\dots,a_N,\s_N) \in \pspace[d,N] \mid \ 
	 &a_i + \langle\s_i, \p \rangle \leq a_{i^*} + \langle\s_{i^*}, \p \rangle
	 \text{ for all } \p \in \data \\ 
	 &\text{ and all } i,i^* \in [N]  \text{ such that } \p i^* \in E(H)
	 \big\}.
\end{align*}
If $\tilde{C}_\data(H) \neq \emptyset$ then $\tilde{C}_\data(H) = \acone$, where $G$ is the smallest activation pattern such that $E(H) \subseteq E(G)$. We write $G = \overline{H}$ for this smallest graph.

\begin{remark}[Checking existence through linear programs]
    In theory it may be difficult to determine whether $\tilde{C}_\data(H)$ is empty or not. In practice, this can be done via a linear program, where  $\tilde{C}_\data(H)$ is the set of feasible solutions. This program is defined by $|E(H)|(N-1)$ linear inequalities (where $|E(H)|\leq N|\data|$), and $\tilde{C}_\data(H)\neq \emptyset$ if and only if the corresponding linear program has at least one solution.
    The activation pattern $\overline{H}$ can be computed by checking the validity of all $N(N-1)|\data|$ inequalities on $\tilde{C}_\data(H)$. More specifically, $\overline{H}$ contains the edge $i^* \p$ if $a_i + \langle\s_i, \p \rangle \leq a_{i^*} + \langle\s_{i^*}, \p \rangle$ is a valid inequality for all parameters in $\tilde{C}_\data(H)$ for all $i \in [N]$.
\end{remark}

\begin{definition}[Activation Fan]
    The \emph{activation fan} $\activationfan$ of a finite data set $\data \subset \R^d$ and tropical signomials with $N$ monomials is the set of all activation cones in the parameter space $\pspace[d,N]$. 
\end{definition}

\begin{prop}\label{lem:most-general-fan}
    The activation fan $\activationfan$ is a complete polyhedral fan, i.e.\ a collection of polyhedral cones such that the intersection of any two cones is a face of both, and that their union covers the entire ambient space. 
\end{prop}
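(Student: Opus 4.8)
The plan is to realize $\activationfan$ as the common refinement of $M = |\data|$ complete polyhedral fans, one attached to each data point, each of which is the preimage of the normal fan of a simplex under a linear map; completeness and the fan property then come for free from standard facts about normal fans, preimages, and common refinements.

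First I would fix $\p \in \data$ and consider the linear map
\[
  L_\p \colon \pspace[d,N] \to \R^N, \qquad L_\p(\parameter) = \bigl(a_1 + \inner{\s_1,\p},\,\dots,\, a_N + \inner{\s_N,\p}\bigr).
\]
By definition $\p$ activates the $i$-th term of $f_\parameter$ exactly when the $i$-th coordinate of $L_\p(\parameter)$ attains the maximum, so the activation behaviour at $\p$ is controlled by the fan in $\R^N$ whose cone indexed by a nonempty $S \subseteq [N]$ is
\[
  \sigma_S = \{\, y \in \R^N \mid y_i = \textstyle\max_{k\in[N]} y_k \ \text{ for all } i \in S \,\}.
\]
The next observation is that $\{\sigma_S\}_{\emptyset\neq S\subseteq[N]}$ is precisely the normal fan $\mathcal N(\Delta)$ of the standard simplex $\Delta = \conv(\e_1,\dots,\e_N) \subset \R^N$, with $\sigma_S$ the normal cone of the face $\conv(\e_i : i \in S)$; in particular it is a complete polyhedral fan. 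Since the preimage of a complete polyhedral fan under a linear map is again a complete polyhedral fan, $\mathcal F_\p := \{\, L_\p^{-1}(\sigma_S) \mid \emptyset \neq S \subseteq [N]\,\}$ is a complete polyhedral fan in $\pspace[d,N]$. Unwinding the definition, $\parameter \in L_\p^{-1}(\sigma_S)$ if and only if $a_i + \inner{\s_i,\p} \leq a_{i^*} + \inner{\s_{i^*},\p}$ for all $i^* \in S$ and all $i \in [N]$.

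Then I would pass to the common refinement $\mathcal F = \bigwedge_{\p\in\data}\mathcal F_\p$, whose cones are the nonempty intersections $\bigcap_{\p\in\data} L_\p^{-1}(\sigma_{S_\p})$ over choices of nonempty $S_\p \subseteq [N]$; the common refinement of finitely many complete polyhedral fans is again a complete polyhedral fan. Comparing the inequality description of $L_\p^{-1}(\sigma_{S_\p})$ above with the defining inequalities of $\tilde{C}_\data(H)$ for the bipartite graph $H$ with $N(\p;H) = S_\p$ for each $\p$, one sees that each such intersection is exactly $\tilde{C}_\data(H)$, which by the remark following the definition of activation cones equals $\acone[\overline H]$ whenever it is nonempty. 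Conversely, every activation cone $\acone$ arises this way by taking $S_\p = N(\p;G)$, and every $\parameter \in \pspace[d,N]$ lies in $\acone$ for $G$ its own activation pattern; hence the set of cones of $\mathcal F$ coincides with the set of activation cones, so $\activationfan = \mathcal F$ is a complete polyhedral fan.

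The two "standard fact" invocations (pullback of a complete fan, common refinement of finitely many complete fans) and the unwinding of the defining inequalities are routine. The step that requires care is the final identification: distinct bipartite graphs $H$ can give the same cone $\tilde{C}_\data(H)$, so one must check that passing to the closed representative $\overline H$ makes the assignment $H \mapsto \tilde{C}_\data(H)$ match the activation-pattern labelling of the cones exactly, so that the set of activation cones equals the set of cones of $\mathcal F$ on the nose rather than $\mathcal F$ merely refining it. This bookkeeping is exactly what the $\overline H$-construction recorded before the proposition is designed to handle.
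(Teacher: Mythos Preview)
Your argument is correct. The paper's own proof of this proposition is more elementary and direct: it simply verifies the fan axioms by hand, noting that each $\acone$ is cut out by linear inequalities and is closed under positive scaling, that faces of $\acone$ are activation cones $\acone[G']$ with $E(G)\subseteq E(G')$, that $\acone\cap\acone[G'] = \acone[\overline{G\cup G'}]$ is a face of both, and that every parameter has an activation pattern so the fan is complete. Your approach instead realizes $\activationfan$ structurally as the common refinement of pullbacks of the normal fan of the standard simplex along the linear evaluation maps $L_\p$, and then reads off completeness and the fan property from general principles.

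What your route buys is that it essentially proves \Cref{th:activation-poly} at the same time: since $\Delta(\p)$ is the image of the standard simplex under $L_\p^*$, the normal fan of $\Delta(\p)$ is exactly your $\mathcal F_\p$, and the common refinement of the normal fans of Minkowski summands is the normal fan of the sum. The paper separates these two statements and proves \Cref{th:activation-poly} afterwards by an explicit computation of normal cones that is, in spirit, the same calculation you do when unwinding $L_\p^{-1}(\sigma_S)$. The paper's direct approach, on the other hand, is self-contained and avoids appealing to the ``standard facts'' about pullbacks and refinements of complete fans.
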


\begin{proof}
    Let $G,G'$ be activation patterns. 
	$\acone$ is indeed a polyhedral cone, as it is defined by linear inequalities, and for every $\parameter \in \acone$, $\lambda \in \R$ one has $\lambda \parameter \in \acone$. 
    Its faces are of the form $\acone[G']$ where $E(G) \subset E(G')$, and any such activation pattern defines a face of $\acone$.
	For any pair of activation patterns holds $\acone \cap \acone[G'] = \acone[\overline{G \cup G'}]$ is a face both of $\acone$ and $\acone[G']$.  
    Therefore, the collection of all activation cones forms a polyhedral fan. 
	Every vector of parameters $\parameter=(a_1,s_1,\dots,a_N,s_N) \in \pspace[d,N]$  gives rise to a tropical signomial $f_\theta(\x)$, and $\theta \in \acone[H]$, where $H$ is the activation pattern of $(f_\theta,\data)$. Thus, the activation fan is complete.
\end{proof}

The activation fan serves as a generalization of the polyhedral fan $\Sigma_\data$ from \Cref{sec:linear-classification} which is induced by the hyperplane arrangement $\mathcal H_\data$. The fan $\Sigma_\data$ is the normal fan of a zonotope, i.e.\ the Minkowski sum of $1$-dimensional simplices, one for each data point in $\data$. We now show an analogous statement for the activation fan $\activationfan$. 

\begin{definition}[Activation Polytope] 
\label{def:activation-polytope}
    A point $\p \in \R^d$ defines a simplex 
    $$
    \Delta(\p) = \conv( (1,\p,0,\0,\dots,0,\0), (0,\0,1,\p,\dots,0,\0), (0,\0,0,\0,\dots,1,\p) )\subset \pspace[d,N]
    $$ 
    of dimension $(N-1)$, where $\0 = (0,\dots,0) \in \R^d$.
    For a finite data set $\data\subset \mathbb{R}^d$, the \emph{activation polytope} $\activationpoly$ is the Minkowski sum $\activationpoly = \sum_{\p \in \data} \Delta(\p)$. 
\end{definition}

\begin{theorem}\label{th:activation-poly}
    The activation fan $\activationfan$ is the normal fan of the activation polytope $\activationpoly$. 
\end{theorem}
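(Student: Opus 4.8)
The plan is to exhibit $\activationfan$ as the common refinement of a family of simpler normal fans, one per data point, and then invoke the classical fact that the normal fan of a Minkowski sum of polytopes is the common refinement of the normal fans of the summands. Concretely: if $P = \sum_i P_i$, then each face $F$ of $P$ has a unique decomposition $F = \sum_i F_i$ with $F_i$ a face of $P_i$, and $N_F(P) = \bigcap_i N_{F_i}(P_i)$; thus the cones of the normal fan of $P$ are exactly the nonempty intersections obtained by choosing one normal cone from each $\mathcal N(P_i)$. So the theorem reduces to two steps: identify $\mathcal N(\Delta(\p))$, and show that intersecting these over $\p \in \data$ reproduces the activation cones.

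For the first step I would view a parameter $\parameter = (a_1,\s_1,\dots,a_N,\s_N) \in \pspace[d,N] \cong \R^{N(d+1)}$ as a linear functional and evaluate it on the $k$-th vertex $v_k(\p)$ of $\Delta(\p)$, namely the point carrying $(1,\p)$ in its $k$-th block of $d+1$ coordinates and $0$ elsewhere. This gives $\inner{\parameter, v_k(\p)} = a_k + \inner{\s_k,\p}$, hence $\max_{x \in \Delta(\p)} \inner{\parameter, x} = \max_{k \in [N]}(a_k + \inner{\s_k,\p}) = f_\parameter(\p)$, and the $\parameter$-maximal face of $\Delta(\p)$ is spanned precisely by the vertices $v_k(\p)$ with $k$ a term activated by $\p$. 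Since $\Delta(\p)$ is a simplex, every nonempty $S \subseteq [N]$ spans a face, and the normal cone of $\conv\{v_k(\p) : k \in S\}$ is $\{\parameter \mid a_i + \inner{\s_i,\p} \le a_k + \inner{\s_k,\p} \text{ for all } i \in [N],\, k \in S\}$, which is exactly the one-point activation cone $\tilde{C}_{\{\p\}}(H)$ for the bipartite graph $H$ on $\{\p\} \sqcup [N]$ with $N(\p;H) = S$. Thus $\mathcal N(\Delta(\p))$ is the activation fan of the single-point data set $\{\p\}$.

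For the second step, by the Minkowski-sum fact the cones of the normal fan of $\activationpoly = \sum_{\p \in \data} \Delta(\p)$ are the nonempty intersections $\bigcap_{\p \in \data} \tilde{C}_{\{\p\}}(H_\p)$ over all choices of bipartite graphs $H_\p$ on $\{\p\} \sqcup [N]$. The inequalities defining $\tilde{C}_\data(H)$ are indexed by, and only involve, a single data point each, so such an intersection equals $\tilde{C}_\data(H)$ where $H = \bigcup_{\p} H_\p$ (the bipartite graph on $\data \sqcup [N]$ whose star at each $\p$ is $H_\p$); conversely every bipartite graph on $\data \sqcup [N]$ is the union of its stars. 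By the remark following the definition of the activation cone — a nonempty $\tilde{C}_\data(H)$ equals $\acone[\overline{H}]$, and $\overline{G} = G$ whenever $G$ is already an activation pattern — the collection of nonempty cones $\tilde{C}_\data(H)$ is exactly $\activationfan$. Combining the two steps yields $\activationfan = \mathcal N(\activationpoly)$.

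I do not expect a genuine obstacle here: the proof is essentially a bookkeeping argument once the right pieces are in place. The one point that needs care is keeping the correspondences straight — the unique Minkowski decomposition of a face of $\activationpoly$ versus the decomposition of a bipartite graph into its stars, and the identification of the closure-defined activation cone $\acone$ with the closed polyhedron $\tilde{C}_\data(G)$, which is exactly what the earlier remark provides. The computation $\inner{\parameter, v_k(\p)} = a_k + \inner{\s_k,\p}$ in Step 1 is the linchpin that makes the combinatorics of activation patterns match the face lattice of $\activationpoly$, and it is immediate from the definition of $\Delta(\p)$.
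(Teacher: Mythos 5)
Your proof is correct and follows essentially the same approach as the paper's: reduce to the common-refinement description of the normal fan of a Minkowski sum, compute $\inner{\parameter, v_k(\p)} = a_k + \inner{\s_k,\p}$ to identify $\mathcal{N}(\Delta(\p))$ with the single-point activation fan, and intersect over $\p$. The only (harmless) difference is that the paper works out the identification only for maximal cones and then invokes completeness, whereas you treat all face dimensions at once via the star decomposition of bipartite graphs and the $\tilde C_\data(H) = \acone[\overline{H}]$ remark, which is slightly more direct.
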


\begin{proof}
    Since $\activationpoly$ is a Minkowski sum, the normal fan of $\activationpoly$ is the common refinement of the normal fans of its Minkowski summands $\Delta(\p), \p \in \data$. 
    For such a simplex, the vertices are of the form 
    \[
    \bv_i = (\underbrace{0,\dots,0}_{(i-1)(d+1)
    },
    1,\p,\underbrace{0,\dots,0}_{(N-i)(d+1) 
    }), \quad \text{for $i \in [N]$.} 
    \]
    We describe the normal cone of a vertex $\bv_{i^*}$ for some $i^* \in [N]$. 
    Since $\Delta(\p)$ is a simplex, the normal cone is a simplicial cone, whose facets are orthogonal to the directions of edges $\bv_{i^*} \bv_{i}$ for all $i \in [N] \setminus \{i^*\}$. 
    The normal cone is thus
    \begin{align*}
        N
        _{\bv_{i^*}}^\p &= 
        \{ \parameter \in \pspace[d,N] \mid \langle \parameter, \bv_{i^*} - \bv_{i} \rangle \geq 0 \forall i \in [N] \} \\
        &= \{(a_1,\s_1,\dots,a_N,\s_N) \mid a_{i^*} + \langle \s_i^*, \p \rangle \geq  a_{i} + \langle \s_i, \p \rangle \forall i \in [N] \} \\
        &= \{(a_1,\s_1,\dots,a_N,\s_N) \mid \max_{i \in [N]} (a_i + \langle \s_i, p \rangle) = a_{i^*} + \langle \s_{i^*}, p \rangle \} . 
    \end{align*}
    Any maximal cone of the normal fan of $\activationpoly$ is of the form $C = \bigcap_{\p \in \data} N_{\bv_{i^*(\p)}}^\p$, where $i^*(\p) \in [N]$ with possible repetitions. In other words, 
    \begin{align*}
        C &= 
        \{(a_1,\s_1,\dots,a_N,\s_N) \mid \max_{i \in [N]} (a_i + \langle \s_i, \p \rangle) = a_{i^*(\p)} + \langle \s_{i^*(\p)}, \p \rangle \forall \p \in \data \},
    \end{align*}
    so $C = \acone$, where the activation pattern is the bipartite graph $G = (\data \sqcup [N],E(G))$ with edge set $E(G) = \{\p i^*(\p) \mid \p \in \data, i^*(\p) \in [N], C \subseteq N^{\p}_{\bv_{i^*(\p)}} \}$. We have shown that the maximal cones of the normal fan of $\activationpoly$ are contained in $\activationfan$. Since the normal fan is a complete fan, this finishes the proof.
\end{proof}

\begin{remark}[Linear Classifiers are a Special Case]\label{rmk:special-case-om}
    If $N=2$ then a tropical hypersurface is defined through the linear equation $\inner{\s_1 - \s_2, \x} + (a_1 - a_2) = 0$. The activation polytope $\activationpoly[2]$ is then equivalent to the zonotope $P_\data = \sum_{p \in \data} \conv( (0,\0), (1,\p) ) \subseteq \R^{d+1}$ from \Cref{sec:linear-classification}, however, this equivalence is not obvious. To make this precise, consider the activation polytope $\activationpoly[2] = \sum_{\p \in D} \conv( (1,\p,0,\0), (0,\0,1,\p) ) \subseteq \R^{2(d+1)}$.
    The activation polytope is contained in the $(d+1)$-dimensional affine subspace with $e_i = e_{i+d+1}$ for $i \in [d+1]$. 
    Projecting $\activationpoly[2]$ onto the first $d+1$ coordinates induces an isomorphism between the zonotope $P_\data$ and the activation polytope $\activationpoly[2]$.
    An activation pattern is a bipartite graph with nodes $\data \sqcup \{1,2\}$.
    The isomorphism identifies the labelling $\{1,2\}$ with the labelling $\{+,-\}$, recovering the covectors of the oriented matroid.
\end{remark}

\begin{prop}
    The dimension of the activation polytope $\activationpoly \subset \pspace[d,N]$ is $(N-1)(\dim(\aff(\data)) + 1 )$, where $\aff(\data)$ denotes the smallest affine subspace containing $\data$.
\end{prop}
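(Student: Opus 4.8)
The plan is to reduce the dimension of $\activationpoly$ to a linear-algebra computation via the standard formula for the dimension of a Minkowski sum. For polytopes $Q_1,\dots,Q_r$ and any choice of points $x_i \in Q_i$ we have $Q_1+\cdots+Q_r-(x_1+\cdots+x_r) = (Q_1-x_1)+\cdots+(Q_r-x_r)$, and since each $Q_i-x_i$ contains the origin, the affine hull of this Minkowski sum is the linear subspace $\sum_i \operatorname{span}(Q_i-x_i)$. Hence $\dim \activationpoly = \dim V$, where $V = \sum_{\p\in\data}\operatorname{span}\bigl(\Delta(\p)-\bv_1(\p)\bigr)$ and $\bv_1(\p),\dots,\bv_N(\p)$ denote the vertices of $\Delta(\p)$ as listed in \Cref{def:activation-polytope}. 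Writing a vector of $\pspace[d,N]=\R^{N(d+1)}$ in $N$ blocks $(w^{(1)},\dots,w^{(N)})$ with $w^{(k)}\in\R^{d+1}$, the simplex $\Delta(\p)-\bv_1(\p)$ is the convex hull of $\0$ and the vectors $\bv_i(\p)-\bv_1(\p)$ for $i\in\{2,\dots,N\}$, where $\bv_i(\p)-\bv_1(\p)$ has block $1$ equal to $-(1,\p)$, block $i$ equal to $(1,\p)$, and all other blocks zero. Thus $V$ is spanned by all these vectors over $\p\in\data$ and $i\in\{2,\dots,N\}$.

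The next step is to identify $V$ explicitly. Let $L\subseteq\R^{d+1}$ be the linear span of $\{(1,\p)\mid\p\in\data\}$. I claim
\[
V = \Bigl\{\, (w^{(1)},\dots,w^{(N)}) \;\Big|\; w^{(k)}\in L \text{ for all } k\in[N],\ \sum_{k=1}^{N} w^{(k)}=\0 \,\Bigr\}.
\]
The inclusion ``$\subseteq$'' is clear, since every spanning vector $\bv_i(\p)-\bv_1(\p)$ has all blocks in $L$ and block-sum $\0$. For ``$\supseteq$'', take $(w^{(1)},\dots,w^{(N)})$ in the right-hand side and write each $w^{(k)}=\sum_{\p\in\data}c^{(k)}_\p (1,\p)$; then the element $\sum_{k=2}^{N}\sum_{\p\in\data}c^{(k)}_\p\bigl(\bv_k(\p)-\bv_1(\p)\bigr)$ of $V$ has block $k$ equal to $w^{(k)}$ for $k\geq 2$ and block $1$ equal to $-\sum_{k=2}^{N}w^{(k)}=w^{(1)}$, the last equality by the block-sum constraint; so it equals $(w^{(1)},\dots,w^{(N)})$.

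Finally, the summation map $L^N\to L$, $(w^{(k)})_k\mapsto\sum_k w^{(k)}$, is linear and surjective with kernel $V$, so $\dim V = (N-1)\dim L$. It remains to observe $\dim L = \dim(\aff(\data))+1$: for $\data=\{\p_1,\dots,\p_M\}$ the differences $(1,\p_i)-(1,\p_1)=(0,\p_i-\p_1)$, $i\geq 2$, span a subspace of $\{0\}\times\R^d$ of dimension $\rk\{\p_i-\p_1\mid i\geq 2\} = \dim(\aff(\data))$, while $(1,\p_1)$ has nonzero first coordinate and hence is not in their span, so $\dim L = \dim(\aff(\data))+1$. Combining the three steps yields $\dim\activationpoly = (N-1)(\dim(\aff(\data))+1)$.

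I do not anticipate a genuine obstacle; the argument is elementary. The point that most deserves care is the reverse inclusion above — that requiring block-sum $\0$ is not merely necessary but in fact lets us reconstruct every admissible tuple as an element of $V$ — which the explicit combination handles. As a consistency check one may note that for $N=2$ this recovers $\dim P_\data = \dim(\aff(\data))+1$ for the zonotope of \Cref{sec:linear-classification} via the identification in \Cref{rmk:special-case-om}.
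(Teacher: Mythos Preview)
Your proof is correct and takes a genuinely different route from the paper's. The paper argues dually: it computes the lineality space $\mathcal{L}$ of the normal fan of $\activationpoly$, using that the normal fan of a Minkowski sum is the common refinement of the summands' normal fans, so $\mathcal{L}$ is the intersection of the lineality spaces $\mathcal{L}(\p)$ of the normal fans of the $\Delta(\p)$; it then reads off $\dim\activationpoly = N(d+1)-\dim\mathcal{L}$. You instead work primally, computing the affine hull of the Minkowski sum directly as the sum of the linear spans of the translated summands, and identify this sum explicitly as the kernel of the block-summation map $L^N\to L$.

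Your approach is more elementary and self-contained --- it avoids normal-fan duality entirely and reduces to a clean rank--nullity computation. The paper's approach, in exchange, yields as a byproduct an explicit description of the lineality space of the activation fan (their \Cref{lemma:activation-linspace}), which is used elsewhere; your argument does not produce this directly, though of course one could recover it by taking orthogonal complements of your $V$.
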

\begin{proof}
    Recall that $\dim(\activationpoly) =\dim(\aff(\activationpoly))$. Moreover, the linear space which is parallel to $\aff(\activationpoly)$ is orthogonal to the lineality space $\mathcal L$ of
    {the normal fan of $\activationpoly$}, i.e.\ the smallest linear space which is contained in each cone of the fan. 
    By construction, the normal fan of $\activationpoly$ is a common refinement of the normal fans of the simplices $\Delta(\p), \p \in \data$, and so the lineality space  $\mathcal L$ of the normal fan of $\activationpoly$ is the intersection of the lineality spaces of the simplices. We thus characterize the lineality space $\mathcal L(\p)$ of the normal fan of a simplex $\Delta(\p)$ for a fixed $\p \in \data$. Again, $\mathcal L(\p)$ is orthogonal to the linear space 
    parallel to 
    $\aff(\Delta(\p))$. Since $\Delta(\p)$ is a $(N-1)$-dimensional simplex, we have $\dim(\mathcal L(\p)) = (d+1)N - \dim(\aff(\Delta(\p))) = (d+1)N - (N-1) = dN +1$. By construction, $\mathcal L(\p)$ contains
    \[
    (\e_i,\e_i,\dots,\e_i), \ i \in [d+1] ,  \text{ and } \ (V(\p),\0,\dots,\0), \  
    		 (\0,V(\p),\dots,\0), \ 
    		\dots,\  
    		 (\0,\0,\dots,V(\p)) , \]
    where $V(\p) = (1,\p)^\perp$. We denote $E = \text{span}( \{(\e_i,\dots,\e_i) \mid i \in [d+1] \})$ and $W(\p) = \text{span}((V(\p),\0,\dots,\0),\dots,(\0,\0,\dots,V(\p))$
    The dimension of the total linear span is
     $   \dim(E) + \dim(W(\p)) - \dim(E \cap W(\p)) 
        = (d+1) + Nd - d = Nd + 1,
    $
    so the span equals $\mathcal L(\p)$. 
    The intersection of all these lineality spaces $\mathcal L(\p), \p \in \data$ is spanned by $E$ and $W = \bigcap_{\p \in \data} W(\p) = \text{span}((V,\0,\dots,\0),\dots,(\0,\0,\dots,V))$, 
    where $V = \bigcap_{\p \in \data} V(\p) = \text{span}(\{1\} \times \aff(\data))^\perp$. We thus obtain for the dimension of the lineality space
    \begin{align*}
        \dim(\mathcal L) &= \dim(E) + \dim(W) - \dim(E \cap W) \\  
        &= (d+1) + N(d-\dim(\aff(\data))) - (d-\dim(\aff(\data))) \\
        &= (d+1) + (N-1)(d-\dim(\aff(\data))).
        \end{align*}
    This yields
        $\dim(\activationpoly) = N(d+1) - \dim(\mathcal L) = (N-1)(\dim(\aff(\data))+1).$
\end{proof}

From the above proof we immediately obtain the following dual result.

\begin{prop}\label{lemma:activation-linspace}
The lineality space of the activation fan $\activationfan$ has dimension $(d+1)+(N-1)(d-\aff(\data))$ and is generated by
	\begin{equation*}
 (\e_i,\e_i,\dots,\e_i), \ i \in [d+1] ,  \text{ and } 
		\ (V,\0,\dots,\0),  
		(\0,V,\dots,\0),  
		\dots,  
		(\0,\0,\dots,V) ,  
	\end{equation*}
	where $\e_i \in \R^{d+1}$ denotes a standard basis vector, and
	where $V = \text{span}(\{1\} \times \aff(\data))^\perp \subseteq \R^{d+1}$ is a $(d-\aff(\data))$-dimensional vector space.
\end{prop}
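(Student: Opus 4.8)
The plan is to recognize this as the dual reformulation of the preceding proposition, so that essentially no new computation is required. By \Cref{th:activation-poly}, the activation fan $\activationfan$ is the normal fan of the activation polytope $\activationpoly$. Hence its lineality space is by definition the maximal linear subspace contained in every cone, which is exactly the linear space $\mathcal L$ computed inside the proof of the previous proposition as the intersection $\mathcal L = \bigcap_{\p \in \data} \mathcal L(\p)$ of the lineality spaces of the normal fans of the Minkowski summands $\Delta(\p)$.

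Concretely, I would first recall from that proof that for each $\p$ the space $\mathcal L(\p)$ is spanned by $E = \operatorname{span}\{(\e_i,\dots,\e_i) \mid i \in [d+1]\}$ together with $W(\p) = \operatorname{span}\bigl((V(\p),\0,\dots,\0),\dots,(\0,\dots,\0,V(\p))\bigr)$, where $V(\p) = (1,\p)^\perp \subseteq \R^{d+1}$. The subspace $E$ is independent of $\p$ and therefore survives the intersection over all $\p \in \data$, while the blockwise pieces intersect to $W = \operatorname{span}\bigl((V,\0,\dots,\0),\dots,(\0,\dots,\0,V)\bigr)$ with $V = \bigcap_{\p \in \data} V(\p) = \operatorname{span}(\{1\}\times\aff(\data))^\perp$. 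This is precisely the generating set in the statement, and $\dim V = (d+1) - (\dim\aff(\data)+1) = d - \dim\aff(\data)$.

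For the dimension count I would reuse the inclusion--exclusion already carried out there: a vector lies in both $E$ and $W$ exactly when it has the form $(v,\dots,v)$ with $v \in V$, so $\dim(E \cap W) = \dim V$, whence $\dim\mathcal L = \dim E + \dim W - \dim(E \cap W) = (d+1) + N\dim V - \dim V = (d+1) + (N-1)(d - \dim\aff(\data))$, matching the asserted value.

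The argument is essentially bookkeeping via the normal-fan/polytope duality. The one point deserving care --- and hence the only candidate for an obstacle --- is the identity $\bigcap_{\p \in \data} \mathcal L(\p) = E + W$, i.e.\ that intersecting the subspaces $E + W(\p)$ (which are not direct sums, since $E \cap W(\p) \cong V(\p) \neq \{0\}$) really yields no more than $E + W$. This is already established in the proof of the preceding proposition, so I do not expect to have to redo it; if one wanted a self-contained derivation, it also follows by comparing dimensions, since $\dim\activationpoly$ was pinned down there independently.
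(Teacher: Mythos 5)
Your proposal is correct and takes the same route as the paper, which states this proposition as an immediate dual consequence of the preceding proof (``From the above proof we immediately obtain the following dual result''): you correctly reuse the decomposition $\mathcal L(\p) = E + W(\p)$, the intersection $\mathcal L = E + W$, and the dimension count. One small inaccuracy in your closing remark: the preceding proposition derived $\dim\activationpoly$ \emph{from} $\dim\mathcal L$, not independently, so the ``compare dimensions'' fallback does not give a genuinely separate check; but the identity $\bigcap_{\p}(E+W(\p)) = E+W$ is straightforward anyway, since $(u_1,\dots,u_N) \in E+W(\p)$ iff $u_j - u_k \in V(\p)$ for all $j,k$, and intersecting over $\p$ replaces $V(\p)$ by $V$.
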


In the linear case, the cells of the hyperplane arrangement in parameter space are labelled by covectors of the oriented matroid, and the activation patterns naturally generalize covectors. 
The maximal chambers are labelled by covectors without $0$ entries, corresponding to the fact that each data point in input space lies on precisely one side of the dual hyperplane. 
We now show that the analogue holds for activation patterns: Given a parameter vector in the interior of a maximal cone in the activation fan, each data point lies in precisely one region of the dual tropical hypersurface in input space. 

As in the proof of \Cref{th:activation-poly}, let $\bv_i(\p) = (0,\dots,0,1,\p,0,\dots,0)$ denote the $i^{\text{th}}$ vertex of  $\Delta(\p)$. Since $\activationpoly$ is a Minkowski sum, every face of $\activationpoly$ can be written uniquely as a sum of faces of $\Delta(\p)$, for $\p \in \data$. In the following, we write $F = \sum_{\p \in \data} F(\p)$ for a face of $\activationpoly$, where $F(\p)$ is a face of $\Delta(\p)$.

\begin{prop}\label{prop:activation-faces-labels}
    Let $\acone \in \activationfan$, and let $F \in \activationpoly$ be the dual face. Then $F = \sum_{p \in \data} \conv\left(\bv_i(\p) \mid i \in N(\p;G)\right)$ and $N(\p;G) = \{i \in [N] \mid  \bv_i(\p) \text{ is a vertex of }  F(\p)\}$.
\end{prop}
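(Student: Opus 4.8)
\section*{Proof proposal}

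The plan is to combine the Minkowski-sum structure of $\activationpoly$ with the normal-fan duality from \Cref{th:activation-poly}. First I would fix a parameter vector $\parameter = (a_1,\s_1,\dots,a_N,\s_N)$ in the relative interior of $\acone$. By \Cref{th:activation-poly}, $\acone$ is the normal cone $N_F(\activationpoly)$, so $F$ equals the face of $\activationpoly$ on which the linear functional $\parameter$ is maximized, and this face is the same for every choice of $\parameter \in \operatorname{relint}(\acone)$. I would then invoke the standard fact that taking the face in a fixed direction distributes over Minkowski sums: from $\activationpoly = \sum_{\p \in \data} \Delta(\p)$ one gets $F = \sum_{\p \in \data} \operatorname{face}_\parameter(\Delta(\p))$, a decomposition of $F$ into faces of the summands $\Delta(\p)$. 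By the uniqueness of such a decomposition (recalled just before the statement), this forces $F(\p) = \operatorname{face}_\parameter(\Delta(\p))$ for each $\p \in \data$.

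It then remains to identify $\operatorname{face}_\parameter(\Delta(\p))$. The simplex $\Delta(\p)$ has vertices $\bv_1(\p),\dots,\bv_N(\p)$ with $\langle \parameter, \bv_i(\p)\rangle = a_i + \langle \s_i,\p\rangle$, so $\operatorname{face}_\parameter(\Delta(\p))$ is the convex hull of exactly those $\bv_i(\p)$ for which $a_i + \langle\s_i,\p\rangle = \max_{j\in[N]}(a_j + \langle\s_j,\p\rangle) = f_\parameter(\p)$, i.e.\ of the $\bv_i(\p)$ with $i$ among the terms of $f_\parameter$ activated by $\p$. Since $\parameter$ lies in $\operatorname{relint}(\acone)$, the activation pattern of $(f_\parameter,\data)$ is exactly $G$, so this index set is precisely $N(\p;G)$; hence $F(\p) = \conv(\bv_i(\p) \mid i \in N(\p;G))$ and, summing over $\p$, $F = \sum_{\p\in\data}\conv(\bv_i(\p)\mid i\in N(\p;G))$. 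For the second assertion, the $\bv_i(\p)$, $i\in[N]$, are affinely independent (being the vertices of a simplex), so the convex hull of any subcollection is a simplex whose vertex set is exactly that subcollection; applied to $\conv(\bv_i(\p) \mid i \in N(\p;G))$ this says that the vertices of $F(\p)$ are precisely $\{\bv_i(\p)\mid i\in N(\p;G)\}$, which is the claimed equality $N(\p;G) = \{i \in [N] \mid \bv_i(\p)\text{ is a vertex of }F(\p)\}$.

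The step I expect to need the most care is the claim that the activation pattern of $(f_\parameter,\data)$ is equal to $G$ on \emph{all} of $\operatorname{relint}(\acone)$, rather than merely on a dense subset of $\acone$. To see this one observes that the set $\{\theta \mid \text{the activation pattern of }(f_\theta,\data)\text{ is }G\}$ is the intersection of an affine subspace (coming from the equalities $a_i + \langle\s_i,\p\rangle = a_{i'} + \langle\s_{i'},\p\rangle$ for $i,i' \in N(\p;G)$) with open halfspaces (the strict inequalities $a_i + \langle\s_i,\p\rangle < a_{i'}+\langle\s_{i'},\p\rangle$ for $i \notin N(\p;G)$, $i' \in N(\p;G)$), hence is convex and relatively open; therefore it coincides with its own relative interior, which, as the closure of this set is $\acone$ by definition and by \Cref{th:activation-poly}, is $\operatorname{relint}(\acone)$. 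The remaining ingredients — distributivity of $\operatorname{face}_\parameter$ over Minkowski sums and the uniqueness of the induced decomposition of a face into faces of the summands — are standard facts about polytopes and can simply be cited.
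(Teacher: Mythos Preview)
Your proposal is correct and follows essentially the same approach as the paper: both arguments use the Minkowski-sum/normal-fan duality to decompose $F$ as a sum of faces $F(\p)$ of the simplices $\Delta(\p)$, and then identify the vertices of each $F(\p)$ with the terms activated by $\p$. Your version is in fact somewhat more careful than the paper's, making explicit the role of the relative interior and the affine independence of the $\bv_i(\p)$, while the paper phrases the same computation via the common-refinement description $\acone = \bigcap_{\p\in\data} N_{F(\p)}$.
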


\begin{proof}
    Then $\acone = \bigcap_{\p \in \data} N_{F(\p)}$, where $N_{F(\p)}$ is the normal cone of the face $F(\p)$. 
    Given any $\parameter \in N_{F(\p)}$ with associated tropical signomial $f_\theta$, the activated terms of $f_\theta(\p)$ are $\{i \in [N] \mid  \bv_i(\p) \text{ is a vertex of }  F(\p)\}$. Thus, for any activation cone $\acone[H] \subseteq N_{F(\p)}$ with activation pattern $H$ we have that $N(\p;H) = \{i \in [N] \mid  \bv_i(\p) \text{ is a vertex of }  F(\p)\}$.
    Given $\parameter \in \acone = \bigcap_{\p \in \data} N_{F(\p)}$ the edges of the activation pattern are thus $\bigcup_{\p \in \data} \{\p i \mid i \in [N],  \bv_i(\p) \text{ is a vertex of }  F(\p)\}$. Dually, this yields $F = \sum_{p \in \data} \conv\left(\bv_i(\p) \mid i \in N(\p;G)\right)$.
\end{proof}

Since every face has at least one vertex and every vertex is exactly the Minkowski sum of vertices we get the following. 

\begin{corollary}
    \label{prop:graphs-max-cones}
If $G$ is an activation pattern, then $\deg(\p) \geq 1$ for all $\p \in \data$. The activation cone $\acone$ is of maximal dimension if and only if  $\deg(\p) = 1$ for all $\p \in \data$.
\end{corollary}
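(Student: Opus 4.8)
The plan is to read off both assertions from \Cref{prop:activation-faces-labels} together with the polytope--normal-fan duality established in \Cref{th:activation-poly}. Recall that $\activationfan$ is the normal fan of $\activationpoly = \sum_{\p \in \data} \Delta(\p)$, so every activation cone $\acone$ is the normal cone of a unique face $F$ of $\activationpoly$, and $\acone$ is of maximal dimension exactly when $F$ is a vertex of $\activationpoly$. As in the paragraph preceding \Cref{prop:activation-faces-labels}, write $F = \sum_{\p \in \data} F(\p)$ for the unique decomposition of $F$ into faces $F(\p)$ of the Minkowski summands $\Delta(\p)$.

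First, for the bound $\deg(\p) \geq 1$: by \Cref{prop:activation-faces-labels} we have $N(\p;G) = \{\, i \in [N] \mid \bv_i(\p) \text{ is a vertex of } F(\p) \,\}$. Since $F(\p)$ is a nonempty face of the simplex $\Delta(\p)$, it has at least one vertex, and every vertex of $F(\p)$ is one of the $\bv_i(\p)$. Hence $N(\p;G) \neq \emptyset$, that is, $\deg(\p) = |N(\p;G)| \geq 1$, for every $\p \in \data$.

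For the characterization of maximal cones: $\acone$ is of maximal dimension if and only if its dual face $F$ is a vertex of $\activationpoly$. Because the decomposition $F = \sum_{\p} F(\p)$ is into faces selected by a common outer normal direction, $F$ is a single point if and only if each $F(\p)$ is a single point, i.e.\ a vertex of $\Delta(\p)$ --- a Minkowski sum of polytopes is $0$-dimensional precisely when every summand is. By \Cref{prop:activation-faces-labels}, $F(\p)$ is a vertex if and only if $|N(\p;G)| = 1$, which is $\deg(\p) = 1$. Combining these equivalences, $\acone$ is of maximal dimension if and only if $\deg(\p) = 1$ for all $\p \in \data$.

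The only point requiring a little care is the compatibility of the Minkowski decomposition with being a vertex: that the face $F = \sum_{\p} F(\p)$ of $\activationpoly$ is $0$-dimensional exactly when each $F(\p)$ is $0$-dimensional. This is routine, since the face of $\sum_{\p}\Delta(\p)$ in a direction $w$ equals $\sum_{\p}$ (face of $\Delta(\p)$ in direction $w$), and such a sum is a point if and only if each summand is; I do not expect any genuine obstacle here. Everything else is bookkeeping with \Cref{prop:activation-faces-labels}.
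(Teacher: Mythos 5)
Your argument is correct and is exactly the paper's (the paper compresses it to a single sentence before the corollary: ``Since every face has at least one vertex and every vertex is exactly the Minkowski sum of vertices we get the following''). You spell out the same two steps — nonemptiness of each $F(\p)$ gives $\deg(\p)\geq 1$, and the Minkowski decomposition of a vertex into vertices gives the maximality characterization — via \Cref{prop:activation-faces-labels} and the duality of \Cref{th:activation-poly}, just as intended.
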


In the linear classification case, we have seen that any target dichotomy separates the data into two sets $\dplus, \dminus$, and this dichotomy exists as a maximal covector of the oriented matroid if and only if $\conv(\dplus)\cap \conv(\dminus) = \emptyset$. Note that under the identification of a covector with a bipartite graph $G$ on nodes $\data \sqcup \{+,-\}$, we have that $\dplus$ is the set of points which are neighbors of $+$ in $G$, i.e.\ $\dplus = N(+;G)$, and similarly 
$\dminus = N(-;G)$. 
We now discuss necessary conditions on the geometry of the data in the more general case, i.e.\ such that an activation pattern or cone may exist in the activation fan.

\begin{theorem}\label{prop:geometry-of-data}
    Let $\acone \in \activationfan$ be a nonempty cone in the activation fan. Then the data set $\data \subset \R^d$ satisfies the following necessary conditions:
    \begin{enumerate}[label={\textup{(}\roman*\textup{)}}]
        \item \textup{(}Convexity for maximal cones\textup{)} 
            If $\acone$ is maximal, then for every distinct $i,j \in [N]$ one has $\conv(N(i;G))\cap\conv(N(j;G)) = \emptyset$. \label{item:convexity-max}
        \item \textup{(}Convexity for arbitrary cones\textup{)}  
            Let $i \in [N]$ be a region and $\p_1,\dots,\p_k \in N(i;G)$. Then for every $\p \in \data$ one has: $\p \in \conv(\p_1,\dots,\p_k)$ $\implies$ $\p \in N(i;G)$. \label{item:convexity-arb}
        \item \textup{(}Regularity\textup{)} There exists a subdivision of the input space into maximal regions $\mathcal R_1,\dots,\mathcal R_N$ such that $N(i;G) \subseteq \mathcal R_i$ for each $i \in [N]$, and a dual subdivision of $N$ points which is regular. \label{item:regular}
    \end{enumerate}
\end{theorem}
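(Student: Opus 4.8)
The plan is to reduce all three conditions to elementary properties of maxima of affine functions over convex hulls of data points. Since $\acone$ is nonempty, I would first fix a parameter vector $\theta = (a_1,\s_1,\dots,a_N,\s_N) \in \pspace[d,N]$ whose activation pattern of $(f_\theta,\data)$ is \emph{exactly} $G$ (not a coarsening of it); such $\theta$ exists because $\acone$ is by definition the closure of the nonempty set of such parameters. With $\theta$ fixed, the defining property of the activation pattern reads: $\p i \in E(G)$ if and only if $a_i + \langle\s_i,\p\rangle = f_\theta(\p) = \max_{j\in[N]}\bigl(a_j + \langle\s_j,\p\rangle\bigr)$, equivalently $a_i + \langle\s_i,\p\rangle \ge a_j + \langle\s_j,\p\rangle$ for all $j \in [N]$.

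For part \ref{item:convexity-max} I would invoke \Cref{prop:graphs-max-cones}: maximality of $\acone$ forces $\deg(\p) = 1$ for every $\p \in \data$, so for the chosen $\theta$ the maximum defining $f_\theta(\p)$ is attained at a unique term, giving the \emph{strict} inequalities $a_i + \langle\s_i,\p\rangle > a_j + \langle\s_j,\p\rangle$ for every $\p \in N(i;G)$ and every $j \ne i$. If some point $q$ lay in $\conv(N(i;G)) \cap \conv(N(j;G))$ for distinct $i,j$, then expressing $q$ as a convex combination of points of $N(i;G)$ and averaging the corresponding strict inequalities (strictness survives because at least one coefficient is positive) gives $a_i + \langle\s_i,q\rangle > a_j + \langle\s_j,q\rangle$, whereas the symmetric computation over $N(j;G)$ gives the reverse strict inequality --- a contradiction. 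Part \ref{item:convexity-arb} follows from the same averaging applied to the non-strict inequalities: if a data point $\p = \sum_l \mu_l \p_l$ is a convex combination of $\p_1,\dots,\p_k \in N(i;G)$, then averaging $a_i + \langle\s_i,\p_l\rangle \ge a_j + \langle\s_j,\p_l\rangle$ over $l$ yields $a_i + \langle\s_i,\p\rangle \ge a_j + \langle\s_j,\p\rangle$ for all $j$, so $\p$ activates term $i$ and hence $\p \in N(i;G)$.

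For part \ref{item:regular} I would appeal directly to the duality recalled in \Cref{sec:decision-boundaries}: the same $\theta$ defines the tropical signomial $f_\theta$, and $\mathcal T(f_\theta)$ subdivides $\R^d$ into the regions $\mathcal R_{i^*} = \{\x \in \R^d \mid a_{i^*} + \langle\s_{i^*},\x\rangle = \max_{i\in[N]}(a_i + \langle\s_i,\x\rangle)\}$, $i^* \in [N]$, which are dual to the regular subdivision of $\newt(f_\theta) = \conv(\s_1,\dots,\s_N)$ obtained by projecting the upper hull of $\conv\bigl((a_1,\s_1),\dots,(a_N,\s_N)\bigr)$. By the definition of activation pattern, $\p \in N(i;G)$ precisely when $\p$ activates term $i$, i.e.\ $\p \in \mathcal R_i$; hence $N(i;G) \subseteq \mathcal R_i$ for each $i \in [N]$, and the associated subdivision is regular, which is exactly the claim. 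I do not anticipate a real obstacle: the argument is essentially bookkeeping with affine inequalities, and the only points needing care are to choose $\theta$ so that its activation pattern equals $G$ rather than a coarsening, and to keep track of the single place --- part \ref{item:convexity-max}, via maximality --- where the inequalities may be taken strict, which is precisely why part \ref{item:convexity-arb} must be restricted to genuine data points rather than arbitrary points of the convex hull.
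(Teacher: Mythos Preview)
Your proof is correct and follows essentially the same approach as the paper: fix a parameter $\theta$ realizing $G$ exactly, and exploit convexity of the regions $\mathcal R_i$ (which you phrase as averaging affine inequalities) together with \Cref{prop:graphs-max-cones} for the strictness in part~\ref{item:convexity-max}, then invoke the duality from \Cref{sec:decision-boundaries} for part~\ref{item:regular}. Your version is simply more explicit than the paper's terse sketch, in particular spelling out why strict inequalities survive convex averaging in part~\ref{item:convexity-max}.
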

\begin{proof}
    For cones of arbitrary dimension, \ref{item:convexity-arb} follows from convexity of the regions of $\mathcal T(f_\parameter)$ for any $\parameter \in \acone$.
    By \Cref{prop:graphs-max-cones} the activation patterns of maximal cones satisfy $N(i;G) \cap N(j;G) = \emptyset$ for all distinct $i,j \in [N]$. This, together with \ref{item:convexity-arb} implies \ref{item:convexity-max}. 
    Finally, since $\acone$ is nonempty, there exists a tropical signomial $f_\parameter$ such that $G$ is the activation pattern, and the tropical hypersurface $\mathcal T(f_\theta)$ induces the subdivision. By \Cref{sec:decision-boundaries}, this subdivision is dual to a regular subdivion of the Newton polytope $\newt(f_\theta)$.
\end{proof}

We have given necessary conditions for an activation cone to be nonempty by considering the geometry of the data. Recall that the set of covectors of oriented matroids obey the axiom system \ref{axiom:om:first}--\ref{axiom:om:last} on \cpageref{axiom:om:first}. The following result mimics a system in this spirit to describe the set of activation patterns. 

\begin{theorem}\label{th:axiom-ap}
    Let $\mathcal G$ be the set of activation patterns of $\activationfan$. Then $\mathcal G$ satisfies the following properties.
    \begin{enumerate}[label={\textup{(A \Roman*)}},,leftmargin=1.7cm]
    \item \textup{(}Complete Graph\textup{)} 
        $K_{N,\data} \in \mathcal G$.\label{axiom-ap:emptyset}
    \item \textup{(}Symmetry\textup{)} 
        $G \in \mathcal G \implies$ any graph isomorphic to $G$ which arises through relabelling of the nodes $i \in [N]$ is contained in $\mathcal G$. \label{axiom-ap:symmetry}
     \item \textup{(}Composition\textup{)} If $G,H \in \mathcal G$ then $(G \circ H) \in \mathcal G$, where $$N(\p;G\circ H) = \begin{cases}
         N(\p;G) & \text{if } N(\p;G) \cap N(\p;H) = \emptyset, \\
         N(\p;G) \cap N(\p;H) & \text{otherwise} .
     \end{cases}$$ \label{axiom-ap:composition}
    \item \textup{(}Elimination\textup{)} If $G,H \in \mathcal G$ and $\p \in \data$ then there exists a graph 
        $F \in \mathcal G$ with $N(\p;F) = N(\p;G) \cup N(\p;H)$. \label{axiom-ap:elimination}
    \item \textup{(}Boundary\textup{)} 
        Let $S \subseteq [N]$ be fixed and $G$ be the bipartite graph with edges $E(G) = \{\p i \mid \p \in \data, i \in S \}$. Then $G \in \mathcal G$. \label{axiom-ap:boundary}
    \item \textup{(}Comparability\textup{)} 
        For any point $\p\in [\data]$ the comparability graph $CG_{G,H}^\p$ of any two patterns $G,H \in\mathcal G$ is acyclic. \label{axiom-ap:comparability}
\end{enumerate}

\end{theorem}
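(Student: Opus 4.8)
The plan is to verify each of the six properties by exhibiting, for the bipartite graph in question, an explicit parameter vector realizing it. Recall from \Cref{th:activation-poly} that a bipartite graph $G$ on $\data\sqcup[N]$ lies in $\mathcal G$ precisely when the cone $\acone$ is nonempty, i.e.\ when there is a $\theta=(a_1,\s_1,\dots,a_N,\s_N)\in\pspace[d,N]$ with $\argmax_{i\in[N]}(a_i+\inner{\s_i,\p})=N(\p;G)$ for every $\p\in\data$.

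Properties \ref{axiom-ap:emptyset}, \ref{axiom-ap:symmetry}, \ref{axiom-ap:boundary}, and \ref{axiom-ap:elimination} are immediate. For \ref{axiom-ap:emptyset}, the parameter $\theta=\0$ has $a_i+\inner{\s_i,\p}=0$ for all $i$ and $\p$, so its activation pattern is $K_{N,\data}$. For \ref{axiom-ap:symmetry}, a permutation $\pi$ of $[N]$ acts linearly on $\pspace[d,N]$ by permuting the blocks $(a_i,\s_i)$; it sends each simplex $\Delta(\p)$ to itself and hence fixes the activation polytope $\activationpoly$, so it maps $\activationfan$ to itself, and the activation pattern of $\pi\cdot\theta$ is that of $\theta$ with its $[N]$-labels relabelled by $\pi$. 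For \ref{axiom-ap:boundary} (with $S\neq\emptyset$, which is forced since $\deg(\p)\geq1$ by \Cref{prop:graphs-max-cones}), take $\s_i=\0$ for all $i$, $a_i=0$ for $i\in S$, and $a_i=-1$ for $i\notin S$; then every $\p$ activates exactly $S$. Finally \ref{axiom-ap:elimination} follows from \ref{axiom-ap:boundary}: given $G,H\in\mathcal G$ and $\p_0\in\data$, apply \ref{axiom-ap:boundary} with $S=N(\p_0;G)\cup N(\p_0;H)$, which has $N(\p_0;\cdot)=N(\p_0;G)\cup N(\p_0;H)$.

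For comparability \ref{axiom-ap:comparability}, fix $\p\in\data$ and parameters $\theta_G\in\operatorname{relint}(\acone)$, $\theta_H\in\operatorname{relint}(\acone[H])$ realizing $G$ and $H$. By the definition of the comparability graph, every arc of $CG^\p_{G,H}$ encodes a comparison ($<$, $\leq$, or $=$) between two of the finitely many real numbers $a^G_i+\inner{\s^G_i,\p}$ and $a^H_i+\inner{\s^H_i,\p}$. Composing these comparisons along a putative directed cycle yields $x\leq x$ with at least one strict step, which is impossible; hence $CG^\p_{G,H}$ is acyclic.

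The substantial case is composition \ref{axiom-ap:composition}. Choose $\theta_G\in\operatorname{relint}(\acone)$, $\theta_H\in\operatorname{relint}(\acone[H])$, write $g_i(\p)=a^G_i+\inner{\s^G_i,\p}$ and $h_i(\p)=a^H_i+\inner{\s^H_i,\p}$, and consider $\theta_G+\epsilon\theta_H$. For $\epsilon>0$ sufficiently small, term $i$ is activated at $\p$ by this parameter iff $(g_i(\p),h_i(\p))$ is lexicographically maximal, i.e.\ iff $i\in N(\p;G)$ and $h_i(\p)=\max_{j\in N(\p;G)}h_j(\p)$; when $N(\p;G)\cap N(\p;H)\neq\emptyset$ this activated set is exactly $N(\p;G)\cap N(\p;H)=N(\p;G\circ H)$. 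The only discrepancy occurs at the points of $\mathcal Z:=\{\p\in\data\mid N(\p;G)\cap N(\p;H)=\emptyset\}$, where the activated set $\{i\in N(\p;G)\mid h_i(\p)\text{ maximal over }N(\p;G)\}$ may be a proper subset of $N(\p;G\circ H)=N(\p;G)$. To remove it, the plan is to replace $H$ by the coarser graph $\widetilde H$ with $N(\p;\widetilde H)=N(\p;H)\cup N(\p;G)$ for $\p\in\mathcal Z$ and $N(\p;\widetilde H)=N(\p;H)$ otherwise. A direct check gives $G\circ\widetilde H=G\circ H$ and $N(\p;G)\cap N(\p;\widetilde H)\neq\emptyset$ for every $\p\in\data$; hence for any $\theta_{\widetilde H}\in\operatorname{relint}(\acone[\widetilde H])$ the parameter $\theta_G+\epsilon\theta_{\widetilde H}$ realizes $G\circ\widetilde H=G\circ H$, so $G\circ H\in\mathcal G$. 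The remaining, and main, point is to show that $\widetilde H$ is itself an activation pattern. Since $N(\p;\widetilde H)\supseteq N(\p;H)$ for all $\p$, the cone $\widetilde C_\data(\widetilde H)$ is obtained from $\acone[H]$ by promoting to equalities the defining inequalities that tie $N(\p;G)$ to $N(\p;H)$ at each $\p\in\mathcal Z$, hence it is a nonempty face of $\acone[H]$; what must be checked is that these equalities do not force an additional region of $N(\p';G)$ to become activated at any point $\p'\notin\mathcal Z$, so that the activation pattern of this face is exactly $\widetilde H$ and not a strict coarsening of it. This is the step where the combinatorial geometry of $\data$ genuinely enters: one exhibits a parameter realizing $\widetilde H$ by perturbing $\theta_H$ in a direction that equalizes the values $a_i+\inner{\s_i,\p}$ over $i\in N(\p;G)$ for each $\p\in\mathcal Z$ while preserving all of the strict separations of $H$ at the remaining data points. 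I expect this last verification, rather than the perturbation argument itself, to be the crux of the proof.
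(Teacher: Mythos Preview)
Your arguments for \ref{axiom-ap:emptyset}, \ref{axiom-ap:symmetry}, and \ref{axiom-ap:boundary} are correct and essentially the paper's. Deriving \ref{axiom-ap:elimination} from \ref{axiom-ap:boundary} is a valid shortcut; the paper instead builds an explicit $F$ out of $\theta_G$ and $\theta_H$. For \ref{axiom-ap:comparability} your sketch is on target but imprecisely worded: an arc $j\to k$ does not compare a $G$-value with an $H$-value; it yields the pair $g_j(\p)\ge g_k(\p)$ and $h_k(\p)\ge h_j(\p)$, at least one strict. The paper packages these as the single strict inequality $g_j(\p)-h_j(\p)>g_k(\p)-h_k(\p)$ on the differences and sums around the cycle; you should make explicit which quantities you are chaining.

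The genuine gap is \ref{axiom-ap:composition}, and you already flag it. The paper uses exactly your perturbation $\mathbf z=\theta_G+\varepsilon\,\theta_H$ and simply asserts that at every $\p\in\mathcal Z$ the $\argmax$ of $\mathbf z$ equals $N(\p;G)$; it does not carry out the extra step you worry about. Your concern is legitimate: with $d=1$, $N=3$, $\data=\{0,1\}$, $N(0;G)=N(1;G)=\{1,2\}$, $N(0;H)=\{1\}$, $N(1;H)=\{3\}$ (so $1\in\mathcal Z$), any realization $\theta_H$ with $h_1(1)\neq h_2(1)$ makes the $\argmax$ of $\mathbf z$ at $\p=1$ a singleton rather than $\{1,2\}$. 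Your proposed remedy via $\widetilde H$ would indeed close this once $\widetilde H\in\mathcal G$, but that is exactly what you leave unproved. The difficulty is real: intersecting $\acone[H]$ with the extra equalities gives the nonempty cone $\tilde C_\data(\widetilde H)$, but nothing you have written rules out that those equalities force further ties at other data points (in $\mathcal Z$ or not), so the activation pattern of that face could be a strict coarsening $\overline{\widetilde H}\supsetneq\widetilde H$. As it stands, your proof of \ref{axiom-ap:composition} is incomplete.
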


For a pair $G,H$ of activation patterns, we consider the \emph{comparability graph} $CG_{GH}^{\p}$ of a data point $\p \in \data$. This graph has nodes $[N]$ and contains directed and undirected edges. We draw an edge between $j$ and $k$ if $j \in N(\p;G)$ and $k \in N(\p;H)$. This edge is undirected if $j,k \in N(\p,G) \cap N(\p,H)$ and $j \to k$ otherwise.

\ref{axiom-ap:emptyset},\ref{axiom-ap:symmetry},\ref{axiom-ap:composition}, and \ref{axiom-ap:elimination} are analogues to axioms \ref{axiom:om:first}--\ref{axiom:om:last} of covectors of oriented matroids. On the other hand, \ref{axiom-ap:composition}, \ref{axiom-ap:elimination}, \ref{axiom-ap:boundary}, and \ref{axiom-ap:comparability} are analogues to axioms \ref{axiom:tom-boundary}--\ref{axiom:tom-surronding} of covectors of tropical oriented matroids, {which we will define formally in \Cref{sec:recover-oms}.
For tropical oriented matroids, the composition axiom is replaced by the surrounding axiom, which follows from stricter conditions on possible perturbations of tropical hyperplanes.
}

\begin{proof}
   \ref{axiom-ap:emptyset} is realized by $\theta = (0,\dots,0),$ or any point in the lineality space of $\activationfan$. \ref{axiom-ap:symmetry} holds since the construction of $\activationfan$ is symmetric in $i \in [N]$.
    \ref{axiom-ap:boundary} will be proven separately in \Cref{th:zonotope-face}. 

    For \ref{axiom-ap:composition}, let $\x \in \acone$ and $\mathbf{y} \in \acone[H]$. These are vectors of parameters of the form $\x = (a_1^x,\s_1^x,\dots,a_N^x,\s_N^x)$ and $\textbf{y} = (a_1^y,\s_1^y,\dots,a_N^y,\s_N^y)$. For $\varepsilon > 0$ small enough, consider $\mathbf{z} = \x + \varepsilon \mathbf{y}$. Then for any $\p \in \data$ such that $N(\p;G) \cap N(\p;H) = \emptyset$ holds
    \[
        \argmax_{i \in [N]} (a_i^z + \langle \s_i^z, \p \rangle) = 
        \argmax_{i \in [N]} (a_i^x + \langle \s_i^x, \p \rangle + \varepsilon(a_i^y + \langle \s_i^y, \p \rangle)) = 
        \argmax_{i \in [N]} (a_i^x + \langle \s_i^x, \p \rangle).
    \]
    If $N(\p;G) \cap N(\p;H) \neq \emptyset$, then $\max_{i \in [N]} (a_i^z + \langle \s_i^z, \p \rangle)=(a_{i^*}^x + \langle \s_{i^*}^x, \p \rangle + \varepsilon(a_{i^*}^y + \langle \s_{i^*}^y, \p \rangle)$ if and only if both $a_{i^*}^x + \langle \s_{i^*}^x, \p \rangle = \max_{i \in [N]} a_i^x + \langle \s_i^x, \p \rangle$ and $a_{i^*}^y + \langle \s_{i^*}^y, \p \rangle = \max_{i \in [N]} a_i^y + \langle \s_i^y, \p \rangle$. Thus, $\mathbf{z} \in \acone[G \circ H]$.

    For \ref{axiom-ap:elimination}, let again $\x \in \acone$ and $\mathbf{y} \in \acone[H]$. 
    There are fixed values $\mu_x(\p), \mu_y(\p) \in \R$ such that for every $i^* \in N(\p; G)$ and $j^* \in N(\p;G)$ holds
    \begin{align*}
        \max_{i \in [N]} (a_i^x + \langle \s_i^x, \p \rangle) = a_{i^*}^x + \langle \s_{i^*}^x, \p \rangle = \mu_x(\p), \quad
        \max_{i \in [N]} (a_i^y + \langle \s_i^y, \p \rangle) = a_{j^*}^y + \langle \s_{j^*}^y, \p \rangle = \mu_y(\p).
    \end{align*}
    Choose $\textbf{z} = (a_1^z+\mu^z(\p),\s_1^z,\dots,a_N^z+\mu^z(\p),\s_N^z)$ such that 
    $$a_i^z + \langle \s_i^z , \p \rangle + \mu^z(\p) = \max(a_i^x + \langle \s_i^x , \p \rangle + \mu^y(\p), a_i^y + \langle \s_i^y , \p \rangle + \mu^x(\p) ).$$ For $\p$ holds
    \begin{align*}
        \max_{i \in [N]} (a_i^z + \langle \s_i^z , \p \rangle + \mu^z(\p)) 
        &= \max_{i \in [N]} \left( \max(a_i^x + \langle \s_i^x, \p \rangle + \mu^y(\p),a_i^y + \langle \s_i^y, \p \rangle + \mu^x(\p) \right) \\
        &= \max \left(
            \max_{i \in [N]}(a_i^x + \langle \s_i^x, \p \rangle + \mu^y(\p)),
            \max_{i \in [N]}(a_i^y + \langle \s_i^y, \p \rangle + \mu^x(\p))
        \right) \\
        &=\max(\mu^x(\p) + \mu^y(\p) , \mu^y(\p) + \mu^x(\p) )
    \end{align*}
    and the maximizers are precisely the indices in $N(\p;G) \cup N(\p;H)$.

    For \ref{axiom-ap:comparability}, suppose that $i_0,i_1,\dots,i_n,i_{n+1} = i_0 \in [N]$ is a cycle in $CG_{G,H}^\p$. After contracting all undirected edges we can assume that all edges are directed. Let again $\x \in \acone,\mathbf{y} \in \acone[H]$. An edge from $i_k$ to $i_{k+1}$ indicates that we have $i_k \in N(\p;G)$ and $i_{k+1} \in N(\p;H)$. This gives 
    \begin{align*}
        a_{i_{k+1}}^x + \langle \s_{i_{k+1}}^x, \p \rangle \leq a_{i_k}^x + \langle \s_{i_k}^x, \p \rangle, \text{ and }
        a_{i_k}^y + \langle \s_{i_k}^y, \p \rangle \leq a_{i_{k+1}}^y + \langle \s_{i_{k+1}}^y, \p \rangle.
    \end{align*}
    and at least one of these two inequalities is strict, since the edge is directed. Together this implies 
    \[
        a_{i_{k+1}}^x - a_{i_{k+1}}^y + \langle \s_{i_{k+1}}^x - \s_{i_{k+1}}^y , \p \rangle
        <
        a_{i_k}^x - a_{i_k}^y + \langle \s_{i_k}^x - \s_{i_k}^y , \p \rangle
    \]
    Adding these conditions up for all $i_0,\dots,i_n$ yields $0<0$.
\end{proof}

\Cref{th:axiom-ap} gives necessary conditions on the set of activation patterns which appear as labels of the activation cones. The following statement shows that not all bipartite graphs will appear as activation patterns, unless the data points are affinely independent and are thus at most $d+1$ many. 

\begin{theorem}
\label{thm:nr-cones-fan}
    The activation fan $\activationfan$ has at most $N^M$ maximal cones and $(2^{N}-1)^M$ cones of arbitrary dimension, where $N$ is the number of monomials and $M=|\data|$ is the number of data points. This bound is attained if and only if the points are affinely independent.
\end{theorem}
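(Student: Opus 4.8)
The plan is to count activation patterns directly, using the bijection between cones of $\activationfan$ and the patterns that occur, and then to realize or obstruct patterns by hand.

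First I would set up the dictionary between cones and patterns. By \Cref{th:activation-poly} the activation fan is the normal fan of $\activationpoly$, and by \Cref{prop:activation-faces-labels} a cone $\acone$ and its dual face $F=\sum_{\p\in\data}F(\p)$ determine one another through $N(\p;G)=\{i\in[N]\mid \bv_i(\p)\text{ is a vertex of }F(\p)\}$; hence the cones of $\activationfan$ are in bijection with the activation patterns that actually occur. By \Cref{prop:graphs-max-cones} such a pattern is exactly a choice of a \emph{nonempty} set $N(\p;G)\subseteq[N]$ for every $\p\in\data$, and $\acone$ is maximal precisely when all of these sets are singletons. There are $(2^N-1)^M$ such choices and $N^M$ of them with all parts singletons, which immediately yields the two upper bounds.

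Second, for the equality when $\data=\{\p_1,\dots,\p_M\}$ is affinely independent, I would note that affine independence of the $\p_i$ is exactly linear independence of the vectors $(1,\p_i)\in\R^{d+1}$, so the linear map sending an affine function $\ell(\x)=a+\inner{\s,\x}$ to $(\ell(\p_i))_{i\in[M]}\in\R^M$ is surjective. Given arbitrary nonempty $T_1,\dots,T_M\subseteq[N]$, for each $k\in[N]$ choose an affine $\ell_k(\x)=a_k+\inner{\s_k,\x}$ with $\ell_k(\p_i)=1$ for $k\in T_i$ and $\ell_k(\p_i)=0$ for $k\notin T_i$. For $\parameter=(a_1,\s_1,\dots,a_N,\s_N)$ the tropical signomial $f_\parameter=\bigoplus_{k\in[N]}\ell_k$ then has $f_\parameter(\p_i)=\max_{k\in[N]}\ell_k(\p_i)=1$, attained exactly for $k\in T_i$, so the activation pattern of $(f_\parameter,\data)$ is the bipartite graph $G$ with $N(\p_i;G)=T_i$ for all $i$. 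Thus every one of the $(2^N-1)^M$ subset assignments occurs as an activation pattern, and in particular so does every all-singleton one; both bounds are attained.

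Third, for strictness when $\data$ is affinely dependent (here $N\ge2$): pick an affine dependence $\sum_i\lambda_i\p_i=\0$ with $\sum_i\lambda_i=0$ and not all $\lambda_i=0$. Then $I^+=\{i:\lambda_i>0\}$ and $I^-=\{i:\lambda_i<0\}$ are both nonempty, and with $\mu=\sum_{i\in I^+}\lambda_i>0$ the point $\mathbf{q}=\tfrac1\mu\sum_{i\in I^+}\lambda_i\p_i=-\tfrac1\mu\sum_{i\in I^-}\lambda_i\p_i$ is a convex combination of $\{\p_i\}_{i\in I^+}$ and also of $\{\p_i\}_{i\in I^-}$. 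I claim the singleton assignment $T_i=\{1\}$ for $i\in I^+$, $T_i=\{2\}$ for $i\in I^-$, and $T_i=\{1\}$ otherwise is not realizable: a realizing $\parameter$ would make the affine function $g=\ell_1-\ell_2$ (with $\ell_k(\x)=a_k+\inner{\s_k,\x}$) strictly positive on $\{\p_i\}_{i\in I^+}$ and strictly negative on $\{\p_i\}_{i\in I^-}$, and evaluating $g$ on the two convex representations of $\mathbf{q}$ would give $g(\mathbf{q})>0$ and $g(\mathbf{q})<0$, a contradiction (when $\mathbf{q}$ is itself a data point this is also immediate from \Cref{prop:geometry-of-data}\,\ref{item:convexity-arb}). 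Hence this assignment is not an activation pattern, so the number of maximal cones is strictly less than $N^M$ and the number of cones of arbitrary dimension is strictly less than $(2^N-1)^M$.

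The main obstacle I expect is the realizability step in the second part — verifying that the exact $0/1$ interpolation genuinely forces the activation pattern to be the prescribed bipartite graph, with no spurious extra incidences — together with packaging the affine dependence in the third part as the two-sided evaluation of an affine functional at a Radon point; the first part is bookkeeping given \Cref{prop:activation-faces-labels} and \Cref{prop:graphs-max-cones}.
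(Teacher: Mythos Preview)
Your proposal is correct and follows essentially the same three-step structure as the paper's proof: count bipartite graphs via \Cref{prop:graphs-max-cones} for the upper bounds, interpolate affine functionals to realize every pattern when the data are affinely independent, and use a Radon-type point to obstruct a pattern otherwise. The only cosmetic differences are that you interpolate with $0/1$ values where the paper uses $0$/negative values (and handles empty $N(i;G)$ separately), and that you derive the obstruction directly from the affine functional $\ell_1-\ell_2$ rather than invoking \Cref{prop:geometry-of-data}; both are the same argument in slightly different packaging.
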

\begin{proof}
    Recall from \Cref{prop:graphs-max-cones} that for any activation pattern $\deg(\p) \geq 1$ for all $\p \in \data$. The number of bipartite graphs on $M\sqcup N$ without isolated nodes is $M$ is $(2^N - 1)^M$,
    and is thus an upper bound for the number of cones of arbitrary dimension. 
    For cones of maximal dimension, \Cref{prop:graphs-max-cones} implies that
    the activation patterns satisfy $\deg(\p) = 1$ for all $\p \in \data$, and the number of such bipartite graphs is $N^M$. It thus remains to show that this bound is attained if and only if the points are affinely independent.
    We first assume that the points are affinely dependent 
    and show that there exists a graph which does not occur as an activation pattern.
    Let $\p_1,\dots,\p_M$ be affinely dependent, i.e., there are distinct data points $\p_1,\dots,\p_k$, $\p'_1,\dots,\p'_l$, and $\mu_i,\mu_j > 0, i\in[k],j\in[l]$ such that there is an affine dependency
    \[
        \sum_{i = 1}^k \mu_i \p_i = \sum_{j=1}^l \mu_j \p'_j, \qquad \sum_{i = 1}^k \mu_i = \sum_{j=1}^l \mu_j.
    \]
    Let $\lambda_i = \frac{\mu_i}{\sum_{i=1}^k \mu_i}, \lambda_j = \frac{\mu_j}{\sum_{j=1}^l \mu_j}. $ 
    Then this gives the point 
    \[
         \mathbf{q} = \sum_{i = 1}^k \lambda_i \p_i = \sum_{j=1}^l \lambda_j \p'_j \in \conv(\p_1,\dots,\p_k) \cap \conv(\p'_1,\dots,\p'_l),
    \]
    and $\mathbf{q}$ lies in the relative interiors of both convex hulls. 
    Therefore, \Cref{prop:geometry-of-data} \ref{item:convexity-arb} implies that such a bipartite graph $G$ which has $N(i;G) = \{\p_1,\dots,\p_k\}$ and $N(j;G) = \{\p'_1,\dots,\p'_l\}$ is not an activation pattern of $\activationfan$. 
    Conversely, let $\p_1,\dots,\p_M$ be affinely independent, and let $G$ be any bipartite graph such that $N(\p;G) \neq \emptyset$ for all $\p \in \data$. Note that since the points are affinely independent we have $M \leq d+1$. This implies that for any $i \in [N]$ there exists an affine linear functional $f_i(\x) = c_i + \inner{\mathbf{u}_i,\x}$ such that $f_i(\p) = 0$ for all $\p \in N(i;G)$ and $f_i(\p) < 0$ for all $\data \setminus N(i;G)$. Choose $a_i = c_i$ and $\s_i = \textbf{u}_i$ for all $i \in [N]$ such that $N(i;G) \neq \emptyset$, and otherwise choose $a_i,\s_i$ small enough. Then $\parameter = (a_1,\s_1,\dots,a_N,\s_N)$ is a vector of parameters whose activation pattern is $G$.
\end{proof}

\begin{remark}
    The number of maximal cones in the activation fan $\activationfan$ equals the number of vertices of the activation polytope $\activationpoly$. 
    The article \cite{doi:10.1137/21M1413699} gives a sharp upper bound for the number of vertices of generic Minkowski sums. 
    However, the Minkowski summands $\Delta(\p)$ are not entirely generic in our case. 
\end{remark}

\subsection{Linear and Tropical Classification within the Activation Fan}\label{sec:recover-oms}

In \Cref{sec:activation-structure} we have seen how activation patterns can be thought of as multivalued analogues of covectors of oriented matroids. Specifically, \Cref{rmk:special-case-om} describes how $N=2$ recovers the linear case. In this section we describe how also for $N>2$ every activation fan carries a family of hyperplane arrangements (and thus oriented matroids) with it, by considering intersections with affine spaces. We extend our findings to tropical oriented matroids, which are analogues of oriented matroids arising through arrangements of tropical hyperplanes.

Recall from the case of linear classification, that the dichotomies are given by a hyperplane arrangement $\mathcal H_\data$ in parameter space, the normal fan of the zonotope $P_\data$. 

\begin{theorem}\label{th:zonotope-face}
    For any $S \subseteq [N]$ the bipartite graph with edges $E(G) = \{\p i \mid \p \in \data, i \in S\}$ is an activation pattern of a cone in $\activationfan$. The cone $\acone$ is the normal cone of a face $F$ of the activation polytope $\activationpoly$, where $F$ is itself an activation polytope $\activationpoly[|S|]$.
    In particular, ranging over all sets with $|S| = 2$, one gets $\binom{N}{2}$ many faces of the activation polytope $\activationpoly$ which are equal to the zonotope $P_\data$.
\end{theorem}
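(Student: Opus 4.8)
The plan is to realize the graph in question as the activation pattern of an explicit parameter vector, and then to read off the corresponding face of $\activationpoly$ directly from \Cref{prop:activation-faces-labels}. Fix $S \subseteq [N]$ and consider the parameter $\parameter = (a_1,\s_1,\dots,a_N,\s_N) \in \pspace[d,N]$ with $\s_i = \0$ for all $i$, $a_i = 0$ for $i \in S$, and $a_i = -1$ for $i \notin S$. Then $f_\parameter(\p) = \max_{i \in [N]} a_i = 0$ for every $\p \in \data$, and this maximum is attained precisely at the indices $i \in S$; hence $\p$ activates term $i$ if and only if $i \in S$, so the activation pattern of $(f_\parameter,\data)$ is exactly the graph $G$ with $E(G) = \{\p i \mid \p \in \data,\ i \in S\}$. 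In particular $\parameter \in \acone \neq \emptyset$, so $G$ is an activation pattern of $\activationfan$; this is exactly the content of axiom~\ref{axiom-ap:boundary}.

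By \Cref{th:activation-poly}, $\activationfan$ is the normal fan of $\activationpoly$, so $\acone$ is the normal cone of its dual face $F \subseteq \activationpoly$. By \Cref{prop:activation-faces-labels}, $F = \sum_{\p \in \data} \conv(\bv_i(\p) \mid i \in N(\p;G))$, and since $N(\p;G) = S$ for every $\p$, this reads $F = \sum_{\p \in \data} \conv(\bv_i(\p) \mid i \in S)$.

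It then remains to recognize $F$ as an activation polytope for $|S|$ monomials. Write $S = \{i_1 < i_2 < \dots < i_{|S|}\}$ and let $\pi_S \colon \R^{N(d+1)} \to \R^{|S|(d+1)}$ be the coordinate projection onto the $(d+1)$-coordinate blocks indexed by $i_1,\dots,i_{|S|}$. Each vertex $\bv_i(\p)$ with $i \in S$ has all its nonzero coordinates inside block $i$, so $F$ is contained in the coordinate subspace spanned by the blocks indexed by $S$; on this subspace $\pi_S$ restricts to a linear isomorphism onto $\R^{|S|(d+1)} = \pspace[d,|S|]$ that sends $\bv_{i_t}(\p)$ to the $t$-th vertex of the simplex $\Delta(\p) \subset \pspace[d,|S|]$. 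Hence $\pi_S(F) = \sum_{\p \in \data} \Delta(\p) = \activationpoly[|S|]$, i.e.\ $F$ is linearly isomorphic to the activation polytope $\activationpoly[|S|]$. For the final sentence I would specialize to $|S| = 2$: there are $\binom{N}{2}$ such subsets, and the associated faces $F_S$ are pairwise distinct, because the "constant" vertex $\sum_{\p \in \data} \bv_i(\p)$ lies in $F_S$ if and only if $i \in S$. Each $F_S$ is isomorphic to $\activationpoly[2]$, which by \Cref{rmk:special-case-om} is, via the projection onto the first $d+1$ coordinates, the zonotope $P_\data$; composing the two projections gives the claimed identification.

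The only genuinely delicate points are bookkeeping ones: making precise in what sense "$F$ is itself an activation polytope $\activationpoly[|S|]$" — namely up to the linear isomorphism $\pi_S$ restricted to the span of $F$ — and verifying that the $\binom{N}{2}$ faces obtained for $|S|=2$ are actually different faces rather than merely combinatorially isomorphic ones. I do not expect a real obstacle here, since the structural work (the normal-fan description in \Cref{th:activation-poly} and the labelling of faces by neighborhoods in \Cref{prop:activation-faces-labels}) is already available.
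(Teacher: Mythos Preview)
Your proposal is correct and follows essentially the same route as the paper: exhibit an explicit parameter realizing $G$, then invoke \Cref{prop:activation-faces-labels} to read off $F = \sum_{\p \in \data}\conv(\bv_i(\p)\mid i\in S)$ and identify it with $\activationpoly[|S|]$ via the obvious coordinate embedding. Your witness parameter (all $\s_i=\0$, $a_i\in\{0,-1\}$) is in fact simpler than the paper's, which instead picks a linear functional $l$ with $\langle(1,\p),l\rangle>0$ on $\data$ and sets the $i$th column to $l$ or $\0$; both work equally well for $S\neq\emptyset$. Your explicit distinctness check for the $\binom{N}{2}$ faces is a nice addition that the paper leaves implicit.
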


\begin{proof}
    For notational convenience, we identify a $(d+1)N$-dimensional point $\theta \in \pspace$ with a matrix $M$ of size $(d+1)\times N$, where the first column is identified with the first $(d+1)$ entries of the vector, and so on.

    Fix $S \subseteq [N]$.
     Since $\data$ is a finite set of points, there exists a linear functional $l \in \R^{d+1}$ such that $\langle (1,\p), l \rangle > 0$ on all $\p \in \data$.
     We define the $i^\text{th}$ column of the matrix $M^S$ as 
     \[
     M^S_{i} = \begin{cases}
         l & \text{if } i \in S, \\
         \0 & \text{if } i \in [N]\setminus S.
     \end{cases}
     \]
     Since $M^S$ corresponds to a vector of parameters of a tropical signomial $f$, it has an associated activation pattern $G$, which we now describe. Recall that $N(i;G)=\{ \p \in \data \mid \p \text{ activate the } i^{\text{th}} \text{ term}\}$. For $\p \in \data$ we have
     $
        f(\p) = \langle (1\p),l \rangle > 0,
     $
     and precisely the terms $i \in S$ are active. Thus, $E(G) = \{\p i \mid \p \in \data, i \in S\}$, and this is the activation pattern of the cone $\acone$ which contains $M^S$ in its relative interior. 
     
    We now describe the face $F^S$ of the activation polytope whose normal cone is $\acone$. By \Cref{prop:activation-faces-labels}, we have that $F^S = \sum_{\p \in \data} F(\p)$, where the face $F(\p)$ of $\Delta(\p)$ is the $(|S-1|)$-dimensional simplex $\conv(M^i \mid i \in S)$.
    The polytope $F^S$ lies inside an affine 
     space of codimension $(|S|-1)(\dim(\aff(\data)+1)$. Inside this affine space, we have $F^S = \activationpoly[|S|]$, which is embedded into $\pspace[d,N]$ by inserting $\0$ for all columns indexed by $[N]\setminus S$, when we view the vertices of $\activationpoly[|S|]$ as matrices of size $(d+1)\times |S|$ and the vertices of $F^S$ as matrices of size $(d+1) \times N$.
\end{proof}

We dualize the above statement to find the hyperplane arrangement $\mathcal H_\data$ in the activation fan $\activationfan$. 
The $k$-dimensional face $F_{ij}$ identified in the former proof is dual to a $(N(d+1) - k)$-dimensional cone $C_{ij} \in \activationfan$. More specifically, $C_{ij} = \acone$ where $G$ is the activation pattern such that $N(\p,G) = \{i,j\}$ for all $\p \in \data$. The $k'$-dimensional faces of $F_{ij}$ are dual to $(N(d+1) - k')$-dimensional cones of $\activationfan$ which contain $C_{ij}$. The collection of these cones is called the \emph{star} of $C_{ij}$, and denoted by $\operatorname{star}(C_{ij})$. Let $\operatorname{lin}(C_{ij})$ denote the smallest linear space containing $C_{ij}$. Projecting the cones in the star of $C_{ij}$ onto the orthogonal space $\operatorname{lin}(C_{ij})^\perp$ yields the normal fan of the zonotope $F_{ij}$. We thus obtain the following dual statement:

\begin{theorem}\label{th:hyperplane-arr-substructure}
    For each $i,j \in [N]$, let $C_{ij} = \acone$ be the activation cone for the pattern $G$ with $N(\p;G) = \{i,j\}$ for all $\p \in \data$. 
    Then the projection of $\operatorname{star}(C_{ij})$ onto $\operatorname{lin}(C_{ij})^\perp$ is the polyhedral fan induced by the hyperplane arrangement $\mathcal H_\data$.
\end{theorem}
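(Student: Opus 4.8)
The plan is to dualize \Cref{th:zonotope-face} via the standard correspondence between the star of a cone in a normal fan and the normal fan of the dual face. Recall the general fact from polyhedral geometry: if $P$ is a polytope with a face $F$ whose outer normal cone is $C = N_F(P)$, then the cones of the normal fan of $P$ containing $C$ are exactly the normal cones $N_{F'}(P)$ of the faces $F' \subseteq F$, and these are precisely the cones of $\operatorname{star}(C)$. Moreover the linear span $\operatorname{lin}(C)$ equals $U^\perp$, where $U$ is the linear subspace parallel to $\aff(F)$: every $y \in C$ is constant on $\aff(F)$, hence $y \perp U$, and conversely $\operatorname{lin}(C)$ exhausts all of $U^\perp$. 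Consequently the orthogonal projection $\pi$ onto $\operatorname{lin}(C)^\perp = U$ is injective on each cone of $\operatorname{star}(C)$ and sends $N_{F'}(P)$ to the normal cone of $F'$ inside $F$, regarded as a polytope in its own affine hull; thus $\pi\big(\operatorname{star}(C)\big)$ is the normal fan of $F$.

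First I would record that $C_{ij}$ is the normal cone of $F_{ij}$. By \Cref{prop:activation-faces-labels}, the face of $\activationpoly$ dual to $\acone$ for the pattern $G$ with $N(\p;G) = \{i,j\}$ for all $\p \in \data$ is $\sum_{\p \in \data} \conv\big(\bv_i(\p),\bv_j(\p)\big)$, which is exactly the face $F_{ij} = F^{\{i,j\}}$ from the proof of \Cref{th:zonotope-face}. Applying the general fact above with $P = \activationpoly$, $F = F_{ij}$ and $C = C_{ij}$ shows that the projection of $\operatorname{star}(C_{ij})$ onto $\operatorname{lin}(C_{ij})^\perp$ is the normal fan of $F_{ij}$.

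Then I would invoke \Cref{th:zonotope-face} in the case $|S| = 2$ together with \Cref{rmk:special-case-om}: inside its affine hull, $F_{ij}$ is the activation polytope $\activationpoly[2]$, and under the coordinate projection identifying the $k$-th and $(k+d+1)$-st coordinates this polytope is the zonotope $P_\data = \sum_{\p \in \data} \conv\big(\0,(1,\p)\big)$. By the results recalled in \Cref{sec:linear-classification}, the normal fan of $P_\data$ is the fan $\Sigma_\data$ induced by the hyperplane arrangement $\mathcal H_\data$. Composing these linear isomorphisms of fans gives the claim.

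The one genuinely delicate point is the bookkeeping of linear spans: one must verify that $\operatorname{lin}(C_{ij})$ is exactly the orthogonal complement in $\pspace[d,N]$ of the linear space parallel to $\aff(F_{ij})$ — equivalently, that upon projecting no directions of the star collapse and none spuriously survive — so that $\pi$ transports the fan structure faithfully. This is where the specific Minkowski-sum geometry of $\activationpoly$ enters rather than the generic normal-fan formalism: by \Cref{th:activation-poly} and \Cref{lemma:activation-linspace} the lineality space of $\activationfan$ lies in $C_{ij}$ and is complementary, inside $\pspace[d,N]$, to the direction space of the zonotopal face $F_{ij}$. I would spell this out explicitly, since it is the step that actually uses the structure of the activation polytope rather than general facts about normal fans.
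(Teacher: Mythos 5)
Your argument mirrors the paper's: the paper does not give a formal proof of \Cref{th:hyperplane-arr-substructure} but derives it in the surrounding text as the dual of \Cref{th:zonotope-face} via the star/normal-cone correspondence and the identification $F_{ij}\cong P_\data$, which is exactly your route. So the proposal is correct in its overall structure. Two slips are worth flagging, though, both in the bookkeeping you rightly singled out as the delicate step.

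First, the claim that ``the orthogonal projection $\pi$ onto $\operatorname{lin}(C)^\perp = U$ is injective on each cone of $\operatorname{star}(C)$'' is false: every cone $\sigma\in\operatorname{star}(C_{ij})$ contains $C_{ij}\subseteq\operatorname{lin}(C_{ij})=U^\perp=\ker\pi$, so $\pi$ collapses at least $C_{ij}$ to the origin. What is true, and what the argument actually needs, is that $\pi$ restricted to the star respects the face lattice --- each $N_{F'}(\activationpoly)$ with $F'\subseteq F_{ij}$ maps onto the normal cone of $F'$ inside $F_{ij}$, with kernel exactly $\operatorname{lin}(C_{ij})$ --- so that the images assemble into the normal fan of $F_{ij}$ in $U$.

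Second, in the final paragraph you attribute the identity $\operatorname{lin}(C_{ij})=U^\perp$ to the Minkowski-sum geometry of $\activationpoly$ via \Cref{lemma:activation-linspace}, and write that the lineality space of $\activationfan$ is complementary to the direction space of $F_{ij}$. That last assertion is dimensionally off for $N>2$: the lineality space of $\activationfan$ has dimension $(d+1)+(N-1)(d-\dim\aff(\data))$, whereas $\operatorname{lin}(C_{ij})$ has dimension $N(d+1)-\dim F_{ij}$, and only the latter is complementary to $U$. The fact you actually want, that $N_F(P)$ spans exactly the orthogonal complement of $\aff(F)-\aff(F)$, is a general feature of normal fans (valid also for non-full-dimensional $P$) and does not require the Minkowski-sum structure; that structure is used only to identify $F_{ij}$ with $P_\data$ via \Cref{th:zonotope-face} and \Cref{rmk:special-case-om}, as in your second paragraph.
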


We now move towards tropical hyperplane arrangements and tropical oriented matroids. Tropical oriented matroids can be viewed as multivalued analogues of oriented matroids, which arise through tropical hyperplane arrangements. We will see that activation patterns can also be viewed as generalization of realizable tropical oriented matroids.

A \emph{tropical hyperplane} $H(-\mathbf{a}) $  
is the tropical hypersurface $\mathcal T(f)$ defined by the linear tropical polynomial $f_{\mathbf{a}}(\x) = \bigoplus_{i \in [d]} a_i \odot \x_i$. It is an affine translate of the normal fan of a standard simplex with \emph{apex} $- \mathbf{a} = -(a_1,\dots,a_d)\in\R^d$. Thus, any tropical hyperplane subdivides the ambient space into $d$ maximal regions, one for each term of $f_{\mathbf{a}}$. Given a finite number of tropical hyperplanes $H(\mathbf{a}^1),\dots,H(\mathbf{a}^M)$ we consider the common refinement of all these subdivisions, and label each region $\mathcal R$ by a \emph{tropical covector} $(A_1,\dots,A_M)$ where $A_i \subseteq [d]$ is the set of indices of the terms of $f_{\mathbf{a}^{{i}}}$ which are active
on $\mathcal R$. We obtain an activation pattern $G = ([d]\sqcup \data, E(G))$ from a tropical covector by setting $N(\p_i;G) = A_{i}$ for $\p_i \in \data = \{\p_1,\dots,\p_M\}$. Any set of tropical covectors which arise through such an arrangement of tropical hyperplanes is called a \emph{realizable} tropical oriented matroid.

Similarly to the linear case, tropical oriented matroids are defined through a set of axioms, and it was shown that sets satisfying these axioms are in bijection with subdivisions of products of simplices \cite{horn16_topologicalrepresentationtheorem}.
A set  $T \subseteq \{(A_1,\dots,A_M) \mid A_{i} \subseteq [N], i \in [M]\}$ is the set of tropical covectors of a \emph{tropical oriented matroid} if it satisfies the following axioms \cite[Def. 3.5]{ardila08_tropicalhyperplanearrangements}.

\begin{enumerate}[label={(T \Roman*)},leftmargin=1.7cm]
    \item Boundary: For each $j \in [N]$ holds $(\{j\},\dots,\{j\}) \in T.$ \label{axiom:tom-boundary}
    \item Elimination: If $A,B \in T$ and $j \in [M]$ then there exists a type $C \in T$ with $C_j = A_j \cup B_j$ and $C_k \in \{A_k, B_k, A_k \cup B_k \}$ for all $k \in [M]$.
    \item Comparability: The comparability graph $\operatorname{CG}_{A,B}$ of any two types $A$ and $B$ in $T$ is acyclic.
    \item Surrounding: If $A \in T$ the any refinement is also in $T$. \label{axiom:tom-surronding}
\end{enumerate}

Let $A,B$ be tropical covectors and $G,H$ be the corresponding activation patterns. The comparability graph of the tropical covectors is $CG_{A,B} = \bigcup_{\p \in \data} C_{G,H}^\p$, where the comparability graph $C_{G,H}^\p$ is as defined in \Cref{th:axiom-ap}.
The refinement of a type $A = (A_1,\dots,A_M)$ with respect to an ordered partition $(P_1,\dots,P_r)$ of $[M]$ is $A=(A_1 \cap P_{m(1)},\dots,A_M \cap P_{m(M)})$, where $m(i)$ is the largest index for which $A_i \cap P_{m(i)} \neq \emptyset$.
Note that \ref{axiom-ap:composition},\ref{axiom-ap:boundary},\ref{axiom-ap:elimination} and \ref{axiom-ap:comparability} of \Cref{th:axiom-ap} are generalizations of axioms of tropical oriented matroids.

Similarly to \Cref{th:hyperplane-arr-substructure} we can recover realizable tropical oriented matroids by intersection with an affine subspace.

\begin{theorem}\label{th:actifan-affine-slice}
    Let $N = d$ and 
    consider the $d$-dimensional affine space
    \begin{align*}
        \mathcal A = \{
            (a_1,\s_1,\dots,a_d,\s_d) \in \pspace[d,d] \mid 
            &\ \s_i = \e_i \text{ for } i\in [d]
        \}.
    \end{align*}
    Then $\activationfan[d] \cap \mathcal A$ is the collection of cells in the tropical hyperplane arrangement $\bigcup_{\p \in \data} H(-\p)$.
\end{theorem}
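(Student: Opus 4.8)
The plan is to unwind both sides of the claimed equality through their common description in terms of active terms of a tropical linear polynomial. Fix a parameter $\parameter \in \mathcal A$; by construction $\parameter = (a_1,\e_1,\dots,a_d,\e_d)$, so the associated tropical signomial is
\[
    f_\parameter(\x) = \max_{i \in [d]} (a_i + \inner{\e_i,\x}) = \max_{i \in [d]} (a_i + x_i).
\]
Its tropical hypersurface $\mathcal T(f_\parameter)$ is exactly the tropical hyperplane $H(-\mathbf a)$ with apex $-\mathbf a = -(a_1,\dots,a_d)$, as recalled just before the statement. Thus the subdivision of $\R^d$ that $f_\parameter$ induces is the subdivision into the $d$ maximal regions of a single tropical hyperplane, and a data point $\p$ activates the $i$th term of $f_\parameter$ precisely when $\p$ lies in the $i$th region of $H(-\mathbf a)$. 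So the activation pattern of $(f_\parameter,\data)$, restricted to the slice $\mathcal A$, records exactly the tropical covector of $\p$ with respect to $H(-\mathbf a)$.

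From here the two inclusions are both direct. For ``$\subseteq$'': given a cone $\acone \in \activationfan[d]$ with $\acone \cap \mathcal A \neq \emptyset$, pick $\parameter$ in the relative interior of $\acone \cap \mathcal A$; by the paragraph above, $G$ is the activation pattern realized by the single tropical hyperplane $H(-\mathbf a)$, so the cell $\acone \cap \mathcal A$ is (the parameter image of) a cell of the arrangement $\bigcup_{\p \in \data} H(-\p)$ — here one uses that the roles of ``apex'' and ``data point'' are interchangeable: $\p$ lies in region $i$ of $H(-\mathbf a)$ iff $a_i + p_i = \max_k (a_k + p_k)$ iff (viewing $-\mathbf a$ as the variable point and $\p$ as the fixed apex) $-\mathbf a$ lies in the corresponding region of $H(-\p)$. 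For ``$\supseteq$'': any cell of $\bigcup_{\p\in\data} H(-\p)$ is cut out by the inequalities $a_i + p_i \le a_{i^*}+p_{i^*}$ for the active $i^*$ of each $\p$; these are exactly the defining inequalities of some $\tilde C_\data(H)$ intersected with $\mathcal A$, hence of some activation cone intersected with $\mathcal A$. One should also note that $\mathcal A$ meets the lineality space of $\activationfan[d]$ transversally in the right dimension, so that the cell structures match up dimension-by-dimension and we really get a bijection of polyhedral complexes and not merely a set-theoretic equality of supports; this follows from \Cref{lemma:activation-linspace}, since the lineality directions $(\e_i,\dots,\e_i)$ and $(\bv,\0,\dots)$ (with the $\s$-coordinates free) are exactly the directions along which $\mathcal A$ is a transversal slice, and $N=d$ forces $\dim \aff(\data) \le d$ to behave as expected.

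The main obstacle is purely bookkeeping: making the ``apex $\leftrightarrow$ point'' duality precise at the level of cells and checking that the affine slice $\mathcal A$ is generic enough that every cell of the tropical hyperplane arrangement in $\R^d$ (coordinatized by $-\mathbf a$) lifts to a distinct cell of $\activationfan[d] \cap \mathcal A$ and conversely. Concretely one must verify that the linear map $\parameter = (a_1,\e_1,\dots,a_d,\e_d) \mapsto -(a_1,\dots,a_d)$ is an isomorphism from $\mathcal A$ onto $\R^d$ carrying $\activationfan[d]\cap\mathcal A$ onto the arrangement fan, which is immediate once the correspondence of active indices is set up. I expect no genuine difficulty here; the content is entirely in correctly identifying the single-tropical-hyperplane subdivision with the restriction of the activation fan to $\mathcal A$, which is forced by the fact that fixing $\s_i = \e_i$ turns $f_\parameter$ into the standard tropical linear form.
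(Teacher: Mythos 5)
Your approach is the same as the paper's: project $\mathcal A$ onto the $(a_1,\dots,a_d)$-coordinates and exploit the symmetry of the activation condition in $\mathbf a$ and $\p$ to interchange the roles of apex and data point. However, you introduce a stray minus sign that is incorrect and persists through the argument. The condition for $\p$ to activate term $i^*$ of $f_\parameter$ (with $\parameter\in\mathcal A$) is $a_{i^*}+p_{i^*}=\max_k(a_k+p_k)$. By the paper's convention, $H(-\p)=\mathcal T(f_\p)$ where $f_\p(\x)=\max_k(p_k+x_k)$, so region $i^*$ of $H(-\p)$ is $\{\x : p_{i^*}+x_{i^*}=\max_k(p_k+x_k)\}$. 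Plugging in $\x=\mathbf a$ recovers exactly the activation condition, with no sign change: it is $\mathbf a$, not $-\mathbf a$, that lies in region $i^*$ of $H(-\p)$. Your claim that ``$-\mathbf a$ lies in the corresponding region of $H(-\p)$'' instead amounts to $p_{i^*}-a_{i^*}=\max_k(p_k-a_k)$, which is a different (min-type) condition. Consequently your proposed identification $\parameter\mapsto-(a_1,\dots,a_d)$ would carry $\activationfan[d]\cap\mathcal A$ onto a reflected (min-plus) arrangement rather than onto $\bigcup_{\p\in\data}H(-\p)$; the correct map is the plain coordinate projection $\parameter\mapsto(a_1,\dots,a_d)$, which is what the paper uses.

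A smaller imprecision: you assert that $\mathcal A$ meets the lineality space of $\activationfan[d]$ ``transversally.'' In fact $\mathcal A$ contains the lineality direction $(\e_1,\dots,\e_1)$ (shifting all $a_i$ simultaneously), which is parallel to $\mathcal A$. This is not a problem for the theorem because the tropical hyperplane arrangement in $\R^d$ has the matching lineality direction $(1,\dots,1)$; the two lineality spaces correspond under $\pi$ and the cell structures still match. But ``transversal'' is the wrong word for what is happening, and the reference to \Cref{lemma:activation-linspace} does not by itself deliver what you want. With the sign corrected and the lineality remark rephrased, the argument goes through and is, modulo exposition, the paper's proof.
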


\begin{proof}
    Let $\acone \in \activationfan[d]$ such that $\acone \cap \mathcal A \neq \emptyset$, and consider the coordinate projection $\pi(\acone)$ onto coordinates $(a_1,\dots,a_d)$.
    Any parameter $\theta \in \mathcal A$ defines a function $f_\theta(\x) = \bigoplus_{i \in [d]} a_i \odot \x_i$. 
    By construction, $\parameter \in \acone \cap \mathcal A$ if and only if $\mathbf{a} = \pi(\parameter)$ satisfies  $\max_{i \in [d]} a_i + \p_i = a_{i^*} + \p_{i^*}$ for all edges $i^* \p \in E(G)$. 
    In other words, $\mathbf{a}$ is contained in the region $i^*$ of $H(-\p)$ for all such edges, and $\pi(\acone)$ is a region in the tropical hyperplane arrangement.
\end{proof}

We can extend the above statement to more parameters $N > d$ by requiring that the additional monomials $\sma a_i \\ \s_i \strix, i>d$ lie below the lifted polytope $\conv(\sma a_1,\\\e_1\strix,\dots,\sma a_d\\\e_d\strix)$. In this case they are not visible in any regular subdivision of the dual Newton polytope, and can thus be neglected in the above proof.

 The dual statement to \Cref{th:actifan-affine-slice} identifies a subcomplex of the boundary of the activation polytope $\activationpoly[d]$. This subcomplex corresponds to a regular subdivision of products of simplices, and coincides with the boundary complex of the unbounded polyhedron in \cite[Lemma 10]{develin04_tropicalconvexity}, whose bounded cells determine the tropical convex hull of the points in $\data$.

\section{
Classification Fan} 
\label{sec:rational-classification}

In \Cref{sec:decision-boundaries} we have seen how we can understand tropical rational functions $g \oslash h$ via tropical signomials $f$ by separating the $N$ terms of $f$ into $n$ positive and $m$ negative terms. This defines $g$ and $h$ respectively, such that $f = g \oplus h$ and $N = n+m$. 
This procedure subdivides the regions defined by $\mathcal T(f)$ into positive and negative regions. 
In terms of the activation patterns, this amounts to a coloring of the nodes $[N]$ as positive or negative nodes. 
In this section, we consider the classification fan, the fan which arises through the coloring of the nodes. In analogy to the linear case, we will consider the level sets and sublevel sets of the $0/1$-loss function.

\subsection{Dividing the Parameter Space}\label{sec:positive-negative-sectors}

Let $\data \subseteq \R^d$ be a finite set of points, $N \in \N$ and fix $n,m \in \N_{\geq 1}$ such that $n+m = N$. 
We split the parameter space into \emph{parameters of the numerator} $g$, which we denote $a_i, \s_i$ for $i \in [n]$ and \emph{parameters of the denominator} $h$, which are given by $b_j = a_{n+j}, \bt_j = \s_{n+j}$ for $j \in [m]$. Given a vector of parameters $\parameter = (a_1,\s_1,\dots,a_n,\s_n,b_1,\bt_1,\dots,b_m, \bt_m) \in \pspace$, this defines a numerator $g_\parameter(\x) = \max_{i \in [n]} (a_i + \inner{\s_i,\x})$ and denominator $h_\parameter(\x) = \max_{j \in [m]} (b_j + \inner{\bt_j,\x})$, such that $f_\parameter = g_\parameter \oplus h_\parameter$. 
Given an activation pattern $G$ with nodes $V(G) = \data \sqcup [N]$, this induces a coloring of the nodes $[N]$ into $[n]\sqcup[m]$.

\begin{definition}[Activation Pattern of Tropical Rational Function]
 The \emph{activation pattern} of $(g \oslash h, \data)$ is the bipartite graph $G = (V(G),E(G))$ on nodes 
 $V(G) = \data \sqcup ([n] \sqcup [m])$ with edges 
 $E(G) = \{\p i \mid \p \text{ activates the } i^{\text{th}} \text{ term of } g\} 
 \sqcup \{\p j \mid \p \text{ activates the } j^{\text{th}} \text{ term of } h\}$.
\end{definition}

\begin{example}[From Signomials to Rational Functions]\label{ex:points-in-signed-sectors}
    We continue with the activation pattern of the tropical signomials $f$ from \Cref{ex:bipartite-graphs}. Partitioning $N=4$ into $n=2,m=2$ yields 
    \begin{enumerate}[label={\textup(\roman*\textup)}]
        \item a partition of the monomials of $f$ into two signomials $g(\x) = a_1 \odot \x^{\odot \s_1} \oplus a_s \odot \x^{\odot \s_2}$ and $h(\x) = b_1 \odot \x^{\odot \bt_1} \oplus b_s \odot \x^{\odot \bt_2}$,
        \item a partition of the regions defined by $\mathcal T(f)$ into positive regions $i_1, i_2$ and negative regions $j_1, j_2$, 
        \item a partition of the vertices of the dual regular subdivision of $\newt(f)$ into positive and negative vertices.
    \end{enumerate}
    The positive and negative regions are separated by the decision boundary of $g \oslash h$. \Cref{fig:points-in-signed-sectors} shows all of these partitions for the configuration from \Cref{ex:bipartite-graphs}.

    \begin{figure}
        \centering
     	\begin{subfigure}[b]{0.25\textwidth}
     		\centering
     		 \begin{tikzpicture}
	[
	scale=0.8,
	p/.style={inner sep=1.2pt,circle,draw=black,fill=black,thick,label distance=-1.5mm},
	]
	\node[p,label=left:{$\p_1$}] (p1) at (0,2.5) {};
	\node[p,label=left:{$\p_2$}] (p2) at (0,0.5) {};
	\node[p,label=right:{$i_1$}, green!80!black] (i1) at (2,3) {};
	\node[p,label=right:{$i_2$}, green!80!black] (i2) at (2,2) {};
	\node[p,label=right:{$j_1$}, red!80!black] (i3) at (2,1) {};
	\node[p,label=right:{$j_2$}, red!80!black] (i4) at (2,0) {};
	\draw[thick] (p1) -- (i1);
	\draw[thick] (p1) -- (i2);
	\draw[thick] (p2) -- (i3);
	\node at (0,-2) {};
\end{tikzpicture}
     		 \caption{Activation pattern. }
     	\end{subfigure}
		\begin{subfigure}[b]{0.74\textwidth}
			\centering
        	\begin{tabular}{c@{\hskip 1em}@{\hskip 1em} c @{\hskip 1em}@{\hskip 1em}  c}
	\begin{tikzpicture}
		[
		scale=0.7,
		p/.style={inner sep=1.2pt,circle,draw=black,fill=black,thick,label distance=-1.5mm},
		]
        \draw[line width=4pt, color=MidnightBlue!40] (1.5,-.5) -- (1,1) -- (1.25,2.25) -- (.25,3.25);
		\node[p,label={$\p_1$}] (p1) at (0,0) {};
		\node[p,label={$\p_2$}] (p2) at (2,0) {};
		\draw[thick] (-.5,-.5) -- (1,1) -- (1.5,-.5);
		\draw[thick] (1,1) -- (1.25,2.25); 
		\draw[thick] (.25,3.25) -- (1.25,2.25) -- (2.25,2.75);
		\node at (0.8,-.2) {${i_2}$};
		\node at (2,1.5) {${j_1}$};
		\node at (0.3,1.5) {${i_1}$};
		\node at (1.25,3) {${j_2}$};
	\end{tikzpicture}
	&
	\begin{tikzpicture}
		[
		scale=0.9,
		p/.style={inner sep=1.2pt,circle,draw=black,fill=black,thick,label distance=-1.5mm},
		]
        \draw[line width=4pt, color=MidnightBlue!40] (-.5,-.5) -- (0.5,0.5) -- (.25-0.75,3.25-1.75);
        \draw[line width=4pt, color=MidnightBlue!40] (2.25-0.09,2.75-1.75) -- (1.16,.5) -- (1.5,-.5);
		\node[p,label={$\p_1$}] (p1) at (.75,0.5) {};
		\node[p,label=below:{$\p_2$}] (p2) at (2,0.5) {};
		\draw[thick] (-.5,-.5) -- (0.5,0.5)  -- (1.16,.5) -- (1.5,-.5);
		\draw[thick] (.25-0.75,3.25-1.75) -- (1.25-0.75,2.25-1.75);
		\draw[thick]  (1.25-0.09,2.25-1.75) -- (2.25-0.09,2.75-1.75);
		\node at (0.8,-.2) {${i_2}$};
		\node at (-0.3,.5) {${j_2}$};
		\node at (1,1.3) {${i_1}$};
		\node at (0,-1) {};
	\end{tikzpicture}
	&
	  \begin{tikzpicture}
		[
		scale=1.3,
		vertex/.style={inner sep=1.2pt,circle,draw=black,fill=black,thick},
		pseudo/.style={inner sep=1.2pt,circle,draw=black,fill=black,thick}
		]
        \draw[line width=4pt, color=MidnightBlue!40] (0,1) -- (0,0.5) -- (-.5,-.25) -- (0.5,-0.25) -- (1,-0.5);
		\draw[thick] (1,-0.5) -- (.5,-.25);
		\draw[thick] (-.5,-.25) -- (-1,-0.5);
		\draw[thick] (0,.5) -- (0,1);
		\draw[thick,color=black] (.5,-0.25) -- (-0.5,-0.25) -- (0,0.5) -- (.5,-0.25);
		\node[label = {above right:{${j_1}$}}] at (.1,.1) {};
		\node[label = {above left:{${i_2}$}}] at (-.1,.1) {};
		\node[label = {below:{${i_1}$}}] at (0,-0.2) {};
		\node[label = {below:{${j_2}$}}] at (0.05,.35) {};
		\node[vertex,label={$\p_1$}] (p1) at (1.5*-.5,1.5*-.25) {};
		\node[vertex,label=below:{$\p_2$}] (p2) at (1,1.5*-.25) {};
	\end{tikzpicture}
	\\  
	\begin{tikzpicture}
		[
		scale=0.7,
		p/.style={inner sep=1.2pt,circle,draw=black,fill=black,thick,label distance=-1.5mm},
		]
        \draw[line width=4pt, color=MidnightBlue!40,rounded corners] (0,2) -- (-1,1) -- (1.5,0.5) -- (0,0);
		\draw[thick] (-1,1) -- (0,0) -- (1.5,.5) -- (0,2) -- (-1,1);
		\draw[thick] (-1,1) -- (1.5,.5); 
		\node[p,label=left:{$i_1$}] at (-1,1) {};
		\node[p,label=below:{$i_2$}] at (0,0) {};
		\node[p,label=right:{$j_1$}] at (1.5,.5) {};
		\node[p,label={$j_2$}] at (0,2) {};
	\end{tikzpicture}
	&
	\begin{tikzpicture}
		[
		scale=0.7,
		p/.style={inner sep=1.2pt,circle,draw=black,fill=black,thick,label distance=-1.5mm},
		]
        \draw[line width=4pt, color=MidnightBlue!40, rounded corners] (-1,1) -- (0,0) -- (1.5,.5) -- (0,2) -- (-1,1);
		\draw[thick] (-1,1) -- (0,0) -- (1.5,.5) -- (0,2) -- (-1,1);
		\draw[thick] (0,0) -- (0,2); 
		\node[p,label=left:{$j_2$}] at (-1,1) {};
		\node[p,label=below:{$i_2$}] at (0,0) {};
		\node[p,label=right:{$j_1$}] at (1.5,.5) {};
		\node[p,label={$i_1$}] at (0,2) {};
	\end{tikzpicture}
	&
	\begin{tikzpicture}
		[
		scale=.8,
		point/.style={inner sep=1.2pt,circle,draw=black,fill=black,thick},
		label distance=-0.5mm
		]
        \draw[line width=4pt, color=MidnightBlue!40, rounded corners] (-1,1) -- (0,.5) -- (0,-1) -- (1,1) -- (-1,1);
		\draw[thick,color=black] (1,1) -- (0,-1) -- (-1,1) -- (1,1);
		\draw[thick] (1,1) -- (0,.5) -- (-1,1);
		\draw[thick] (0,.5) -- (0,-1);
		\node[point, label = {above right:{$j_1$}}] at (1,1) {};
		\node[point, label = {above left:{$i_2$}}] at (-1,1) {};
		\node[point, label = {below:{$i_1$}}] at (0,-1) {};
		\node[point, label = {[label distance=-2mm]below right:{$j_2$}}] at (0,0.5) {};
	\end{tikzpicture}
\end{tabular}
        	\caption{Decision boundary in input space (top) and dual sign-mixed Newton polytopes (bottom).}
        \end{subfigure}
        
        \caption{A coloring of the bipartite graph and the corresponding regions, as described in \Cref{ex:points-in-signed-sectors}. The positive regions are labeled by $i_1,i_2$ and colored in green, the negative regions are labeled by $j_1,j_2$ and colored in red, and the decision boundary is highlighted in blue.}
        \label{fig:points-in-signed-sectors}
    \end{figure}
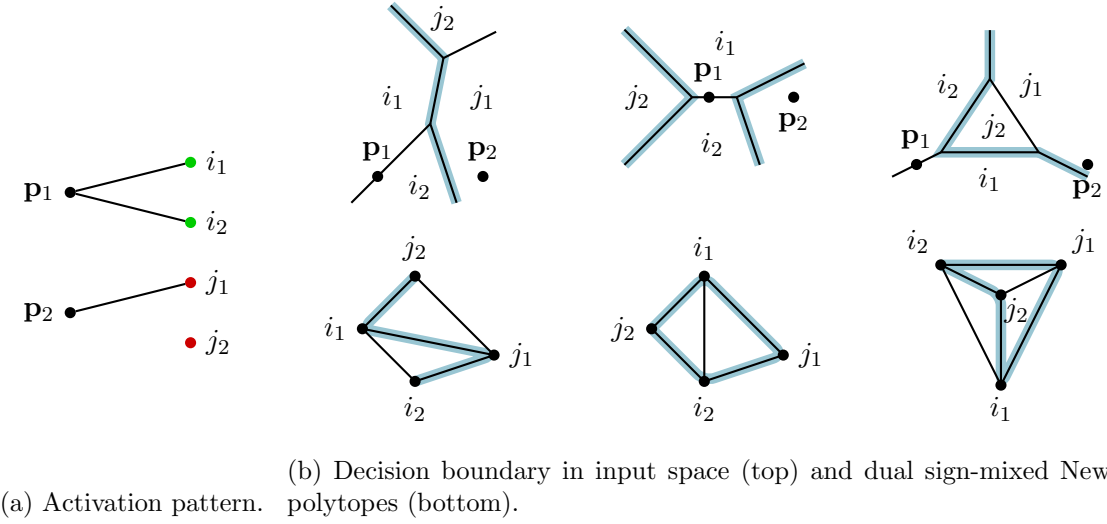
\end{example}

With this partition of parameters, the activation cone of $G$ is
\begin{align*}
	\acone =& \Big\{\parameter \mid 
	 \max_{i \in [n], j \in [m]} (a_i + \langle \s_i, \p \rangle, b_j + \langle \bt_j, \p \rangle) = a_{i^*} + \langle \s_{i^*}, \p \rangle
	\text{ for all } \p i^* \in E(G) \text{ and }\\
	&\phantom{\Big\{\parameter \mid} \max_{i \in [n], j \in [m]} (a_i + \langle \s_i, \p \rangle, b_j + \langle \bt_j, \p \rangle) = b_{j^*} + \langle \bt_{j^*}, \p \rangle
	\text{ for all } \p j^* \in E(G) 
	\Big\} \\
    \subseteq& \pspace. 
\end{align*}

\begin{lemma}
    Fix a partition $[N]=[n]\sqcup[m]$ and an activation pattern $G$ of tropical rational functions. Then for any $\parameter \in \acone$ it holds that 
	$(g_\parameter \oslash h_\parameter)(\p)\geq 0$ if and only if $\p i^* \in E(G)$ for some $i^* \in [n]$, and $(g_\parameter \oslash h_\parameter)(\p)\leq 0$ if and only if $\p j^* \in E(G)$ for some $j^* \in [n]$. 
    The inequality is strict if and only if the set of neighbors of $\p$ in $G$ is monochromatic, i.e.\ $N(\p;G) \subseteq [n]$ or $N(\p;G) \subseteq [m]$.
\end{lemma}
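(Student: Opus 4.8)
The plan is to reduce everything to the two convex functions $g_\theta$ and $h_\theta$ and the identity $f_\theta = g_\theta \oplus h_\theta = \max(g_\theta, h_\theta)$. Recall that the activation pattern $G$ records, for each $\p \in \data$, which of the $N = n+m$ terms of $f_\theta$ attain the value $f_\theta(\p)$, where the terms indexed by $[n]$ are the terms of $g_\theta$ and those indexed by $[m]$ are the terms of $h_\theta$. The elementary observation driving the proof is: a term $i^* \in [n]$ attains $f_\theta(\p)$ if and only if $a_{i^*} + \langle \s_{i^*}, \p\rangle = g_\theta(\p)$ \emph{and} $g_\theta(\p) \ge h_\theta(\p)$; symmetrically, a term $j^* \in [m]$ attains $f_\theta(\p)$ if and only if $b_{j^*} + \langle \bt_{j^*}, \p\rangle = h_\theta(\p)$ and $h_\theta(\p) \ge g_\theta(\p)$. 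Since the maximum defining $g_\theta(\p)$ is always attained by some index in $[n]$ (and that defining $h_\theta(\p)$ by some index in $[m]$), this yields at once: $\p$ has a neighbour in $[n]$ in $G$ iff $g_\theta(\p) \ge h_\theta(\p)$, i.e.\ iff $(g_\theta \oslash h_\theta)(\p) \ge 0$; and $\p$ has a neighbour in $[m]$ iff $h_\theta(\p) \ge g_\theta(\p)$, i.e.\ iff $(g_\theta \oslash h_\theta)(\p) \le 0$. This settles the two biconditionals. (I would also flag that the statement of the denominator case contains a typo: it should read $j^* \in [m]$, not $j^* \in [n]$.)

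For the strictness claim I would split into three cases using the same observation. First, $(g_\theta \oslash h_\theta)(\p) > 0$ iff $g_\theta(\p) > h_\theta(\p)$, which holds iff no term of $h_\theta$ attains $f_\theta(\p)$, i.e.\ iff $N(\p;G)\cap[m] = \emptyset$; by \Cref{prop:graphs-max-cones} we have $\deg(\p)\ge 1$, so this is equivalent to $\emptyset \neq N(\p;G) \subseteq [n]$. The mirror-image argument gives $(g_\theta \oslash h_\theta)(\p) < 0$ iff $N(\p;G) \subseteq [m]$. Finally, $(g_\theta \oslash h_\theta)(\p) = 0$ iff $g_\theta(\p) = h_\theta(\p)$ iff $f_\theta(\p)$ is attained both by a term of $g_\theta$ and by a term of $h_\theta$, i.e.\ iff $N(\p;G)$ meets both $[n]$ and $[m]$, which is precisely the case in which $N(\p;G)$ is \emph{not} monochromatic. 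Combining the three cases shows that the relevant inequality is strict exactly when $N(\p;G)$ is monochromatic.

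The one place that needs care — and the only real subtlety — is the passage from parameters that genuinely realize $G$ to all $\theta \in \acone$, since $\acone$ is a Euclidean closure: on its relative boundary the true activation pattern $G'$ of $(f_\theta,\data)$ properly contains $G$, because additional terms tie. For $\theta$ in the relative interior of $\acone$ the pattern equals $G$ and the argument above applies verbatim. On the boundary one still has $E(G) \subseteq E(G')$, so the implications ``$\p$ has a neighbour in $[n]$ in $G$ $\Rightarrow (g_\theta \oslash h_\theta)(\p) \ge 0$'' and ``$\p$ has a neighbour in $[m]$ in $G$ $\Rightarrow (g_\theta \oslash h_\theta)(\p) \le 0$'' survive, while the converse implications should be read with $G'$ in place of $G$; accordingly I would either state the lemma for $\theta$ in the relative interior of $\acone$ (the regime where $G$ is actually the activation pattern) or phrase it in terms of $G'$ on the boundary. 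Everything else is a direct unwinding of the definitions.
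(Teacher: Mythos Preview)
Your proof is correct and follows essentially the same approach as the paper's: both reduce to the identity $f_\theta=\max(g_\theta,h_\theta)$ and observe that a term indexed by $i^*\in[n]$ attains $f_\theta(\p)$ precisely when $g_\theta(\p)\ge h_\theta(\p)$ and that term realises $g_\theta(\p)$, and symmetrically for $[m]$. The paper's argument is much terser---it only writes out one implication and leaves the converse and the strictness case to the reader---whereas you spell out both directions, treat strictness by a clean trichotomy, and invoke \Cref{prop:graphs-max-cones} to guarantee $N(\p;G)\neq\emptyset$.

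Your closing paragraph on the boundary of $\acone$ is a genuine observation that the paper glosses over: as stated, the biconditionals can fail for $\theta$ on the relative boundary of $\acone$ (e.g.\ if $N(\p;G)\subseteq[m]$ but a tie with a numerator term occurs at $\theta$, one gets $(g_\theta\oslash h_\theta)(\p)=0$ yet no edge $\p i^*$ with $i^*\in[n]$ lies in $G$). The paper's proof implicitly works in the relative interior, and your proposed fix---either restrict to the relative interior or replace $G$ by the actual activation pattern $G'$ at $\theta$---is the right way to make the statement precise. You are also right about the typo: the second clause should read $j^*\in[m]$.
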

\begin{proof}
	If
	$	
        \max_{i \in [n], j \in [m]} (a_i + \langle \s_i, \p \rangle, b_j + \langle \bt_j, p \rangle) = a_{i^*} + \langle \s_{i^*}, \p \rangle
	$
    then 
	$g_\parameter(\p) \geq h_\parameter(\p)$ for all  $i^* \in [n]$ such that $\p i^* \in E(G)$. This inequality is strict if and only if the maximum is not attained in any term of $h_\parameter(\p)$.
	Similarly, $g_\parameter(\p) \leq h_\parameter(\p)$ for all  $j^* \in [m]$ such that $\p j^* \in E(G)$.
\end{proof}

\begin{definition}[Classification Fan]
    The \emph{classification fan} $\classificationfan$ of a finite data set $\data \subset \R^d$ and tropical rational functions with $n$ monomials in the numerator and $m$ monomials in the denominator is the set of all activation cones in parameter space $\pspace$ of tropical rational functions. 
\end{definition}

We emphasize that the fan structures of the activation fan $\activationfan[n+m]$ and classification fan $\classificationfan$ coincide. 
The only difference is a partition of the coordinates in the underlying spaces. However, this distinction is crucial as we now want to distinguish between monomials of the numerator and monomials of the denominator.

\subsection{The Space of Dichotomies}\label{sec:space-of-dichotomies}

In this section, we subdivide the parameter space $\pspace$ into sets, each corresponding to those parameters which classify the data according to a fixed target dichotomy. These (open) sets generalize the chambers of the hyperplane arrangement from \Cref{sec:linear-classification}. We will also describe indecision surfaces as analogues of the hyperplanes in the arrangement, which will turn out to be decision boundaries inside the parameter space.

\begin{definition}[Indecision Surface] The \emph{indecision surface} of a data point $\p \in \data$ is
\[
    \mathcal{S}(\p) = \{\parameter \in \pspace \mid (g_\parameter \oslash h_\parameter)(\p) = 0\} 
\]
in the parameter space of tropical rational functions.
\end{definition}

We will see that the indecision surface plays the role of the hyperplane $(1,\p)^\perp$ from the linear case (cf. \Cref{sec:linear-classification}) in this more general setup.

\begin{theorem}
    The indecision surface $\mathcal S(\p)$ is a sign-mixed subfan of the normal fan of $\Delta(\p)$ of dimension $(n+m)(d+1) - 1$. 
    It divides the parameter space $\pspace$ into two open connected components  $\mathcal{S}^+(\p) = \{\parameter \in \pspace \mid (g_\parameter \oslash h_\parameter)(\p) > 0\}$ and $\mathcal{S}^-(\p) = \{\parameter \in \pspace \mid (g_\parameter \oslash h_\parameter)(\p) < 0\}$. 
\end{theorem}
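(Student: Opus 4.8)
The plan is to identify the indecision surface $\mathcal S(\p)$ with a decision boundary living in the parameter space, read off its combinatorics from \Cref{th:decision-boundary}, and then prove that each of the two sides is connected by exhibiting it as a star-shaped set. Fix $\p\in\data$, write $D=(n+m)(d+1)=\dim\pspace$, and set $\phi(\parameter)=(g_\parameter\oslash h_\parameter)(\p)=g_\parameter(\p)-h_\parameter(\p)$. First I would observe that, in terms of the vertices $\bv_k(\p)$ of the simplex $\Delta(\p)$ from \Cref{def:activation-polytope}, one has $g_\parameter(\p)=\max_{i\in[n]}\inner{\parameter,\bv_i(\p)}$ and $h_\parameter(\p)=\max_{j\in[m]}\inner{\parameter,\bv_{n+j}(\p)}$, so that $\phi$ is itself a tropical rational function in the variable $\parameter$, whose numerator and denominator are tropical linear forms with exponent vectors the vertices of $\Delta(\p)$ and with all coefficients zero. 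The underlying tropical signomial $\parameter\mapsto\max_{k\in[N]}\inner{\parameter,\bv_k(\p)}$ therefore has Newton polytope $\conv(\bv_1(\p),\dots,\bv_N(\p))=\Delta(\p)$ carrying the trivial regular subdivision, and its tropical hypersurface is the codimension-$\ge 1$ part of the normal fan of $\Delta(\p)$.

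Applying \Cref{th:decision-boundary} to this tropical rational function (or checking directly: for $\parameter$ in the relative interior of the normal cone $N_\tau$ of a face $\tau\subseteq\Delta(\p)$ the terms active at $\p$ are exactly the vertices of $\tau$, and $g_\parameter(\p)=h_\parameter(\p)$ holds if and only if $\tau$ contains both a vertex $\bv_i(\p)$ with $i\le n$ and a vertex $\bv_{n+j}(\p)$, i.e.\ $\tau$ is sign-mixed) shows that $\mathcal S(\p)$ is the sign-mixed subfan of the normal fan of $\Delta(\p)$ — the union of the normal cones of the sign-mixed faces — whose maximal cones are the normal cones of the sign-mixed edges; each of these has dimension $D-1$, which gives $\dim\mathcal S(\p)=(n+m)(d+1)-1$ (here $n,m\ge 1$ guarantees that a sign-mixed edge exists). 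It is then immediate that $\phi$ is continuous, being a difference of two convex piecewise linear functions of $\parameter$, so $\mathcal S^+(\p)$ and $\mathcal S^-(\p)$ are open, disjoint, and partition $\pspace\setminus\mathcal S(\p)$; what remains is to show that each is nonempty and connected, after which they are automatically its two connected components.

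Connectedness is the crux, and the key is to choose the correct center of star-shapedness: the vertex $\parameter^*:=\bv_1(\p)$ for $\mathcal S^+(\p)$, and $\bv_{n+1}(\p)$ for $\mathcal S^-(\p)$ (which lie in the numerator and denominator coordinate blocks respectively, using $n,m\ge 1$). One has $\phi(\parameter^*)=\inner{\bv_1(\p),\bv_1(\p)}-0=1+\inner{\p,\p}>0$, since $\bv_1(\p)$ is orthogonal to $\bv_k(\p)$ for all $k\ne 1$; in particular $\parameter^*\in\mathcal S^+(\p)$, so the latter is nonempty. For an arbitrary $\parameter\in\mathcal S^+(\p)$ and $\parameter_t=(1-t)\parameter+t\parameter^*$, the same orthogonality yields $\inner{\parameter^*,\bv_{n+j}(\p)}=0$ for all $j$, hence $h_{\parameter_t}(\p)=(1-t)h_\parameter(\p)$ exactly, while the $i=1$ summand of the numerator only picks up the nonnegative contribution $t(1+\inner{\p,\p})$, so $g_{\parameter_t}(\p)\ge(1-t)g_\parameter(\p)$; combining these gives $\phi(\parameter_t)\ge(1-t)\phi(\parameter)$, which is $>0$ for $t\in[0,1)$ and equals $\phi(\parameter^*)>0$ at $t=1$. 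Thus $[\parameter,\parameter^*]\subseteq\mathcal S^+(\p)$, so $\mathcal S^+(\p)$ is star-shaped, hence path-connected; the analogous computation with numerator and denominator swapped handles $\mathcal S^-(\p)$. The main obstacle is really just recognizing that a \emph{vertex} of $\Delta(\p)$ is the right center — its orthogonality to the opposite-sign vertices makes the denominator scale exactly linearly along the segment while the numerator can only increase — and once that is in place the remaining estimates are routine.
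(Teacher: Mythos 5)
Your argument coincides with the paper's for the first half: you both observe that $\mathcal S(\p)$ is the decision boundary, in the variable $\parameter$, of a tropical rational function whose Newton polytope is the simplex $\Delta(\p)$, and you both invoke \Cref{th:decision-boundary} to read off that $\mathcal S(\p)$ is the sign-mixed subfan (whose maximal cones, normal to sign-mixed edges, have dimension $D-1$). The difference is in the connectedness step. The paper notes that $\operatorname{cl}(\mathcal S^+(\p))=\bigcup_{i\in[n]}N_{\bv_i(\p)}$ and argues this union is connected because $\Delta(\p)$ is a simplex (any two of its vertices span an edge, so the corresponding normal cones share a facet); strictly this gives connectedness of the closure and leaves a small step to the open region. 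You instead prove the open set $\mathcal S^+(\p)$ is star-shaped about $\parameter^*=\bv_1(\p)$, exploiting the fact that the blocks of $\parameter^*$ are orthogonal to the denominator vertices $\bv_{n+j}(\p)$ (so $h$ scales exactly linearly along the segment) while the numerator contribution from $i=1$ only increases, yielding $\phi(\parameter_t)\geq(1-t)\phi(\parameter)>0$. This is a clean, fully explicit argument that directly establishes path-connectedness of the open set and arguably tightens the exposition; the paper's route is shorter but more geometric and relies on the reader filling in the passage from connectedness of the closure to connectedness of the interior. Both are correct, and your computation of $\phi(\parameter^*)=1+\|\p\|^2>0$ and the linear-scaling estimates check out.
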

\begin{proof}
    First note that the indecision surface is itself the decision boundary of the tropical rational function
    $
        \bigoplus_{i \in [n]} a_i \odot \s_i^{\odot \p }\oslash \bigoplus_{j \in [m]} b_j \odot \bt_j^{\odot \p }
    $
    in indeterminates $a_i,\s_i,b_j,\bt_j$. The Newton polytope of this tropical rational function is the simplex $\Delta(\p)$. 
    We use the notation from the proof of \Cref{th:activation-poly}, and fix a partition $[N]=[n]\sqcup[m]$ as explained in \Cref{sec:positive-negative-sectors}. 
    \Cref{th:decision-boundary} implies that $\mathcal S(\p)$ is a polyhedral fan, whose maximal cones are dual to edges $\conv(\bv_i,\bv_j)$ where $i \in [n]$ is a monomial of $g$ and $j \in [m]$ is a monomial of $h$. The Euclidean closure of the region $\mathcal S^+(\p)$ is the union of all normal cones of vertices $\bv_i, i \in [n]$ and since $\Delta(\p)$ is a simplex, this union is connected. Similarly, the closure of $\mathcal S^-(\p)$ is the union of normal fans of $\bv_j, j \in [m]$.
\end{proof}

Fix a target dichotomy $\target \in \{+,-\}^M$. We obtain the \emph{space of solutions of a fixed dichotomy} as  $\tspace = \bigcap_{k=1}^M \mathcal S^{\target_k} (\p_k) = \{\parameter \in \pspace \mid \sgn\left(g_\parameter(\p_k) \oslash h_\parameter(\p_k) \right) = \target_k \text{ for all } k \in [M] \}$.

\begin{corollary}
    The arrangement of indecision surfaces $\mathcal{S}(\p)$ over all $\p \in \data$ divides the parameter space into open sets $\tspace[\target]$ over all dichotomies $\target \subseteq \{+,-\}^M$.
\end{corollary}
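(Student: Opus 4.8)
The plan is to prove two things: that the complement of the arrangement $\bigcup_{\p\in\data}\mathcal S(\p)$ is the disjoint union of the sets $\tspace[\target]$ ranging over all dichotomies $\target\in\{+,-\}^M$ (allowing some of these sets to be empty, since not every dichotomy is realizable with $n$ terms in the numerator and $m$ in the denominator), and that each $\tspace[\target]$ is open. Everything will follow from the preceding theorem together with the observation that a parameter lying off the arrangement assigns a well-defined sign to each data point.

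First I would note that, by the definition of the indecision surface, $\parameter\notin\mathcal S(\p_k)$ holds exactly when $(g_\parameter\oslash h_\parameter)(\p_k)\neq 0$. Hence $\parameter$ lies outside the whole arrangement if and only if $(g_\parameter\oslash h_\parameter)(\p_k)\neq 0$ for every $k\in[M]$; in that case the vector $\target$ with entries $\target_k=\sgn\big((g_\parameter\oslash h_\parameter)(\p_k)\big)$ is a bona fide dichotomy in $\{+,-\}^M$, and $\parameter\in\tspace[\target]$ by the definition of the space of solutions of a fixed dichotomy. Conversely, if $\parameter\in\tspace[\target]$ for some dichotomy $\target$, then $\sgn\big((g_\parameter\oslash h_\parameter)(\p_k)\big)=\target_k\in\{+,-\}$ for all $k$, so $(g_\parameter\oslash h_\parameter)(\p_k)\neq 0$ and $\parameter$ avoids every $\mathcal S(\p_k)$. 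This yields $\pspace\setminus\bigcup_{\p\in\data}\mathcal S(\p)=\bigcup_{\target\in\{+,-\}^M}\tspace[\target]$, and the union on the right is disjoint, since the sign of $(g_\parameter\oslash h_\parameter)(\p_k)$ is uniquely determined by $\parameter$, so no $\parameter$ can lie in $\tspace[\target]\cap\tspace[\target']$ for distinct $\target\neq\target'$.

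Next I would check openness. By the preceding theorem, for each $\p\in\data$ the indecision surface $\mathcal S(\p)$ separates $\pspace$ into the two \emph{open} sets $\mathcal S^+(\p)$ and $\mathcal S^-(\p)$. Since $\tspace[\target]=\bigcap_{k=1}^M\mathcal S^{\target_k}(\p_k)$ is a finite intersection of open sets, it is open; equivalently, $\parameter\mapsto(g_\parameter\oslash h_\parameter)(\p_k)$ is a continuous (indeed piecewise linear) function of $\parameter$, so $\mathcal S^{\pm}(\p_k)$ is the preimage of an open half-line.

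I do not expect a genuine obstacle here: the corollary is essentially a repackaging of the preceding theorem. The one point worth flagging in the write-up is that this is \emph{not} a chamber decomposition in the classical sense of \Cref{sec:linear-classification} --- each region $\tspace[\target]$ is open but need not be connected --- and I would phrase the statement and its proof so as not to suggest otherwise, since this disconnectedness is precisely what later makes the analysis of the $0/1$-loss delicate.
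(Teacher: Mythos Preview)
Your proof is correct and follows exactly the natural line of argument that the paper intends; in fact, the paper states this corollary without proof, treating it as an immediate consequence of the preceding theorem and the definition $\tspace[\target]=\bigcap_{k=1}^M\mathcal S^{\target_k}(\p_k)$. Your additional remark that $\tspace[\target]$ need not be connected is also in keeping with the paper's discussion immediately following the corollary.
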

This arrangement can be viewed as the true analogue of the hyperplane arrangement $\mathcal H_\data$ from the linear case, and $\tspace[\target]$ plays the role of an open chamber. 
In \Cref{sec:perfect-classification} we will fix a target dichotomy $\target$ and study (the closure of) the space $\tspace$. We will see that this space is disconnected (cf. \Cref{ex:classification-disconneted}), but it allows a natural fan structure which is inherited from the classification fan $\classificationfan$.

The subdivision into these different subfans relates 
to the growth function and Vapnik-Chervonenkis dimension \cite{doi:10.1137/1116025}. 
This is a fundamental concept in statistical learning theory which relates the complexity of a function class with the generalization ability of any learning algorithm based on that function class. 
Let $c(n,m,\data) = |\{\target \mid \tspace[\target] \neq \emptyset\}|$ denote the number of dichotomies which can be attained by tropical rational functions with $n+m$ terms on the data $\data$. The \emph{growth function} $\Pi_{n,m}\colon \N \to \N$ is given by $\Pi_{n,m}(M) = \max( c(\data,n,m) \mid \data \subset \R^d, |\data|=M)$. The \emph{VC-dimension} of tropical rational functions is $\max(M \mid \Pi_{n,m}(M) = 2^M)$. 
To compute the growth function, 
a common strategy is to count the number of non-empty regions $c(\data,n,m)$ for a given data set $\data$ and then maximize this number over the choice of the data set having a fixed cardinality $M$. 
We refer the reader to \cite{anthony_bartlett_1999} for an overview of these techniques, and \cite{bartlett2019nearly} for recent bounds on the VC dimension of neural networks with piecewise linear activation function. 

\subsection{Classifying Points Correctly}\label{sec:perfect-classification}

We now fix a target dichotomy $\target \in \{ +, - \}^M$ of the data. The dichotomy divides the data points into two sets $\dplus = \{ \p_i \in \data \mid \target_i = + \}$ and $\dminus = \data \setminus \dplus$. An activation pattern $G$ of $(g \oslash h, \data)$ is \emph{compatible} with $\target$ if $N(\p;G) \subseteq [n]$ for all $\p \in \dplus$ and $N(\p;G) \subseteq [m]$ for all $\p \in \dminus$.
An activation pattern is \emph{weakly compatible} with $\target$ if $N(\p;G) \cap [n] \neq \emptyset$ for all $\p \in \dplus$ and $N(\p;G) \cap [m] \neq \emptyset$ for all $\p \in \dminus$.

\begin{definition}[Perfect Classification Fan]
    The \emph{perfect classification fan} $\pclassificationfan$ is the subfan of the classification fan consisting of those closed cones whose activation patterns are weakly compatible with $\target$. In other words, $\pclassificationfan$ is the restriction of the classification fan onto the set
    $
        \overline{\tspace}
 	= \{
 	\parameter \in \pspace \mid
 	g_\parameter(\p) \geq h_\parameter(\p) \forall \p \in \dplus,  \  g_\parameter(\p) \leq h_\parameter(\p) \forall \p \in \dminus
 	\}.
  $
\end{definition}

The wisdom behind this notation will become clear in \Cref{sec:sublevel-sets}, where we consider level-sets of the $0/1$-loss. We have chosen to consider activation cones $\acone$ as closed cones. As a consequence, we obtain lower-dimensional cones on the boundary of $\overline{\tspace}$, whose activation patterns are only weakly compatible with $\target$. On the other hand, this certifies nice geometric properties of the perfect classification fan.

\begin{prop}
    The perfect classification fan $\pclassificationfan$ is pure, i.e.\ every inclusion-maximal cone is full-dimensional. 
\end{prop}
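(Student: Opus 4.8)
The plan is to prove purity by showing that every cone $\sigma$ of $\pclassificationfan$ is a face of a \emph{full-dimensional} cone that again lies in $\pclassificationfan$. By \Cref{th:activation-poly} the classification fan $\classificationfan=\activationfan[n+m]$ is the normal fan of $\activationpoly$, so each of its cones is automatically a face of some full-dimensional cone of $\classificationfan$ (the normal cones of the vertices of $\activationpoly$, which by \Cref{prop:graphs-max-cones} are exactly the $\acone[G'']$ with $\deg_{G''}(\p)=1$ for all $\p\in\data$); the whole content is therefore to produce such an enlargement that still classifies $\target$ weakly. I would argue by contradiction: let $\sigma=\acone[G]$ be an inclusion-maximal cone of $\pclassificationfan$ (so $G$ is weakly compatible with $\target$) and suppose $\sigma$ is not full-dimensional.

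By \Cref{prop:graphs-max-cones} there is a data point $\p$ with $|N(\p;G)|\geq 2$, and by weak compatibility I can pick $i^\circ\in N(\p;G)$ whose colour matches $\target$ at $\p$ (i.e.\ $i^\circ\in[n]$ if $\p\in\dplus$, $i^\circ\in[m]$ if $\p\in\dminus$). Let $H$ be the bipartite graph obtained from $G$ by deleting every edge $\p i$ with $i\in N(\p;G)\setminus\{i^\circ\}$. The defining inequalities of $\tilde C_\data(H)$ are a subset of those of $\tilde C_\data(G)=\sigma$, so $\sigma\subseteq\tilde C_\data(H)$; in particular $\tilde C_\data(H)\neq\emptyset$ and equals $\acone[\overline H]$ for the smallest activation pattern $\overline H\supseteq H$. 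Since $\acone[\overline H]\supseteq\acone[G]$ are cones of one fan, $\sigma$ is a face of $\acone[\overline H]$ and (by the description of faces in the proof of \Cref{lem:most-general-fan}) $E(\overline H)\subseteq E(G)$, hence $H\subseteq\overline H\subseteq G$. Reading off neighbourhoods, $N(\p';\overline H)=N(\p';G)$ for every $\p'\neq\p$ and $i^\circ\in N(\p;\overline H)\subseteq N(\p;G)$; so $\overline H$ is again weakly compatible with $\target$, i.e.\ $\acone[\overline H]\in\pclassificationfan$. Iterating the construction strictly reduces the total number of edges, so it terminates at an all-degree-one pattern, i.e.\ at a full-dimensional cone of $\pclassificationfan$ having $\sigma$ as a face, contradicting maximality.

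The delicate point — and the main obstacle — is showing the enlargement is \emph{proper}, namely $\overline H\neq G$, equivalently that at least one deleted tie $a_i+\langle\s_i,\p\rangle=a_{i^\circ}+\langle\s_{i^\circ},\p\rangle$ can actually be made strict inside $\tilde C_\data(H)$. This is where strict realizability of the target enters: if $\tspace\neq\emptyset$, take a strict classifier $\theta^\ast$ and perturb a relative-interior point $\theta_0$ of $\sigma$ along it — the strict inequalities $g_{\theta_0}(\p')>h_{\theta_0}(\p')$ (resp.\ $<$) at points with monochromatic $N(\p';G)$ survive, and at the remaining (bichromatic) points weak compatibility of $G$ lets one arrange that the term of $\theta^\ast$ achieving the strict inequality at $\p'$ lies in $N(\p';G)$, so that $\theta_0+\varepsilon\theta^\ast$ drops into a full-dimensional cone of $\pclassificationfan$ for small $\varepsilon>0$. (One should separately record the degenerate case: if the target cannot be classified strictly the construction can stall — already for $d=n=m=1$, data $\{0,1,2\}$ and target $(+,-,+)$ one checks $\overline{\tspace}$ equals the lineality space of $\classificationfan$ — so the statement is to be understood with $\tspace\neq\emptyset$, or "full-dimensional" relative to the fan $\pclassificationfan$ itself.)
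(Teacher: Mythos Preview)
Your approach differs from the paper's. The paper argues by contrapositive: given two full-dimensional activation cones $\acone,\acone[H]$ both incompatible with $\target$, it claims their intersection $\acone[\overline{G\cup H}]$ is not weakly compatible, hence any lower-dimensional cone of $\pclassificationfan$ must sit inside some compatible full-dimensional cone. You instead try to enlarge a given weakly compatible cone directly, by deleting all but one correctly-coloured edge at an over-determined vertex~$\p$.

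Your degenerate example is correct and decisive. For $d=n=m=1$, $\data=\{0,1,2\}$, $\target=(+,-,+)$, one checks $\overline{\tspace}=\{a_1=b_1,\ s_1=t_1\}$ equals the $2$-dimensional lineality space, so $\pclassificationfan$ consists of a single inclusion-maximal cone that is not full-dimensional; the proposition fails as stated. The same example breaks the paper's contrapositive step: take $G,H$ the maximal patterns with sign vectors $(+,+,+)$ and $(-,-,-)$; both are incompatible, yet $\overline{G\cup H}=K_{\data,2}$ \emph{is} weakly compatible, since weak compatibility only requires each point to have \emph{some} correctly-coloured neighbour, not that it have \emph{no} wrongly-coloured one. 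A hypothesis such as $\tspace\neq\emptyset$ is needed.

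Even under that hypothesis your perturbation is incomplete. At a bichromatic $\p\in\dplus$ with $g_{\theta_0}(\p)=h_{\theta_0}(\p)$, the first-order change of $g-h$ along $\theta^\ast$ is
$\max_{i\in N(\p;G)\cap[n]}(a_i^\ast+\langle\s_i^\ast,\p\rangle)-\max_{j\in N(\p;G)\cap[m]}(b_j^\ast+\langle\bt_j^\ast,\p\rangle)$,
whereas the strict inequality $g_{\theta^\ast}(\p)>h_{\theta^\ast}(\p)$ you invoke compares the maxima over \emph{all} of $[n]$ and $[m]$; restricting to $N(\p;G)$ can reverse the sign. Your ``one can arrange'' would require a strict classifier whose active term at every bichromatic $\p'$ already lies in $N(\p';G)$, and no argument is given that such a $\theta^\ast$ exists. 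The preceding edge-deletion iteration does not rescue this, since---as your own example shows---the closure $H\mapsto\overline H$ can restore every deleted edge, so the procedure need not make progress.
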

\begin{proof}
    Any lower-dimensional cone in $\pclassificationfan$ is the intersection of two full-dimensional cones from the activation fan.
    Let $\acone,\acone[H]$ be two full-dimensional cones of the activation fan. We show that if $\acone,\acone[H] \not\in \pclassificationfan$ then $\acone \cap \acone[H] \not\in \pclassificationfan$. 
    Since $\acone, \acone[H]$ are maximal, \Cref{prop:graphs-max-cones} implies that     
    $\acone,\acone[H] \in \pclassificationfan$ if and only if $G,H$ are compatible with the target dichotomy $\target$. 
    Therefore, if $\acone \not\in \pclassificationfan$ then there exists some 
    $\p^+_G \in \dplus$ such that $N(\p^+_G;G) \subseteq [m]$ or $\p^-_G \in \dminus$ such that $N(\p^-_G;G) \subseteq [n]$.
    Similarly, $\acone[H] \not\in \pclassificationfan$ implies that there exists some
    $\p^+_H \in \dplus$ such that $N(\p^+_H;H) \subseteq [m]$ or  
    $\p^-_H \in \dminus$ such that $N(\p^-_H;G) \subseteq [n]$. 
    Let $L = \overline{G \cup H}$. Then $\acone \cap \acone[H] = \acone[L]$,
    and since $E(L) \supseteq E(G) \cup E(H)$ we have that 
    $N(\p^+_G;L) \cap [m] \neq \emptyset$ or  $N(\p^+_H;L) \cap [m] \neq \emptyset$ or $N(\p^-_G;L) \cap [n] \neq \emptyset$ or  $N(\p^-_H;L) \cap [n] \neq \emptyset$, i.e. $L$ is not weakly compatible with $\target$.
\end{proof}

The perfect classification fan can be viewed as a subfan of the activation fan, and its full-dimensional cones are thus dual to vertices of the activation polytope. To describe these vertices, recall that $\activationpoly[n+m] = \sum_{k =1}^M \Delta(\p_k)$. Under the partition of the parameters, each simplex $\Delta(\p_i)$ has vertices $\bv^k_i, i \in [n]$ and $\bv^k_j, j \in [m]$, and, similarly to the construction in \Cref{sec:decision-boundaries}, we label each of these vertices with $\sgn(\bv^k_i) = +$ for $i \in [n]$ and $\sgn(\bv^k_j) = -$ for $j \in [m]$. Each vertex $\bv$ of $\activationpoly[n+m]$ can be written uniquely as $\bv = \sum_{k=1}^M \bv^k_{l(k)}$ for indices $l(k) \in [n]\sqcup[m]$ and we equip $\bv$ with the dichotomy $d(\bv) = (\sgn(\bv^1_{l(1)}),\dots,\sgn(\bv^M_{l(M)}))$.

\begin{theorem}\label{th:pclassification-polytope}
    A vertex $\bv$ of $\activationpoly[n+m]$ is dual to a full-dimensional cone of $\pclassificationfan$ if and only if $d(\bv) = \target$.
\end{theorem}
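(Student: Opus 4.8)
The plan is to run the statement through the polytope--fan duality of \Cref{th:activation-poly} and the vertex description of \Cref{prop:activation-faces-labels}, reducing everything to the singleton-neighborhood structure of activation patterns of maximal cones. First I would recall that the classification fan $\classificationfan$ has the same underlying fan structure as the activation fan $\activationfan[n+m]$, so by \Cref{th:activation-poly} its full-dimensional cones are exactly the normal cones of vertices of $\activationpoly[n+m]$. Fix a vertex $\bv$ of $\activationpoly[n+m]$, let $\acone$ be the dual full-dimensional cone with activation pattern $G$, and let $F = \{\bv\}$ be the dual face. Since $\activationpoly[n+m] = \sum_{k=1}^M \Delta(\p_k)$ is a Minkowski sum, $F$ decomposes uniquely as $F = \sum_{k=1}^M F(\p_k)$ with $F(\p_k)$ a face of $\Delta(\p_k)$; because $F$ is a vertex, each $F(\p_k)$ is a single vertex $\bv^k_{l(k)}$ of $\Delta(\p_k)$, and this is exactly the decomposition $\bv = \sum_{k=1}^M \bv^k_{l(k)}$ used to define $d(\bv)$. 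By \Cref{prop:activation-faces-labels}, $N(\p_k;G) = \{i \in [N] \mid \bv^k_i \text{ is a vertex of } F(\p_k)\} = \{l(k)\}$, so $G$ has singleton neighborhoods, consistent with \Cref{prop:graphs-max-cones}.

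Next I would unwind the sign conventions. By definition $\sgn(\bv^k_i) = +$ for $i \in [n]$ and $\sgn(\bv^k_i) = -$ for $i \in [m]$, hence $d(\bv)_k = +$ iff $l(k) \in [n]$ iff $N(\p_k;G) \subseteq [n]$, and $d(\bv)_k = -$ iff $l(k) \in [m]$ iff $N(\p_k;G) \subseteq [m]$, where the last equivalences use that $N(\p_k;G)$ is a singleton. On the other hand, $G$ is compatible with $\target$ precisely when $N(\p_k;G) \subseteq [n]$ for every $k$ with $\target_k = +$ and $N(\p_k;G) \subseteq [m]$ for every $k$ with $\target_k = -$. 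Comparing the two descriptions coordinatewise over $k \in [M]$ gives $d(\bv) = \target \iff G$ is compatible with $\target$.

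Finally I would invoke the observation from the preceding purity proposition that for a full-dimensional cone $\acone$, weak compatibility with $\target$ is equivalent to compatibility: since each $N(\p;G)$ is a singleton, $N(\p;G) \cap [n] \neq \emptyset$ forces $N(\p;G) \subseteq [n]$, and likewise for $[m]$. Combining, $\bv$ is dual to a full-dimensional cone of $\pclassificationfan$ $\iff$ $\acone \in \pclassificationfan$ $\iff$ $G$ is weakly compatible with $\target$ $\iff$ $G$ is compatible with $\target$ $\iff$ $d(\bv) = \target$, which is the assertion. I do not expect a genuine obstacle here: the content is the bookkeeping that matches the polytope-side label $d(\bv)$ to the fan-side compatibility condition, and the only point requiring care is the repeated use of the singleton-neighborhood property of maximal cones to collapse ``weakly compatible'' to ``compatible''.
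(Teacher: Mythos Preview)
Your proposal is correct and follows essentially the same route as the paper's proof: both identify the normal cone of $\bv$ as the intersection $\bigcap_k N_{\bv^k_{l(k)}}$, read off the singleton neighborhoods $N(\p_k;G)=\{l(k)\}$, and match the sign condition $d(\bv)=\target$ to compatibility of $G$ with $\target$. Your write-up is more explicit than the paper's about the weak-compatibility versus compatibility distinction for maximal cones, which the paper simply elides by writing ``compatible'' directly.
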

\begin{proof}
    A vertex $\bv$ has $d(\bv) = \target$ if and only if $\bv$ lies in the intersection of the normal cones of $\bv^k_{l(k)}$ for all $k \in [M]$ and $\sgn(\bv^k_{l(k)}) = \target_k$. By the proof of \Cref{th:activation-poly}, this is equivalent to $\acone$ being the normal cone of $\bv$, where $G$ is compatible with $\target$.
\end{proof}

In the case of linear classifiers, the set of perfect solutions is the single polyhedral cone in the hyperplane arrangement $\mathcal H_\data$ which corresponds to the dichotomy $\target$. In contrast, the perfect classification fan is an entire polyhedral fan. Clearly, as we consider closed polyhedral cones, they always intersect in the origin. 
We thus consider a stronger notion for polyhedral fans. 
A pure $d$-dimensional polyhedral fan $\Sigma$ is \emph{strongly connected} if for all maximal cones $C,D$ there exists a sequence of maximal cones $C = C_0,C_1,\dots,C_k,C_{k+1}=D$ such that $\dim(C_i \cap C_{i+1}) = d-1$. Any fan decomposes into \emph{strongly connected components}, i.e.\ inclusion-maximal subfans of $\Sigma$ which are strongly connected. The strongly connected components of the perfect classification fan are in bijection with the connected components of $\tspace$, which is the interior of the support of $\pclassificationfan$. 

\begin{theorem}\label{ex:classification-disconneted}
    The perfect classification fan is not always strongly connected.
\end{theorem}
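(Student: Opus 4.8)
The plan is to reduce the claim to a single explicit instance. As noted just above the statement, the strongly connected components of $\pclassificationfan$ are in bijection with the connected components of $\tspace$, so it suffices to exhibit one data set $\data$, one choice of $n,m$, and one target dichotomy $\target$ for which the open set $\tspace$ is \emph{disconnected}. I would take the one-dimensional data $\data=\{\p_1,\dots,\p_5\}\subset\R$ with $\p_i=i$, parameters $n=3$, $m=2$, and the alternating target $\target=(+,-,+,-,+)$ --- the configuration that is already not linearly separable in \Cref{sec:linear-classification}, but which becomes perfectly classifiable once $g$ may have three and $h$ two linear pieces. First I would check $\tspace\neq\emptyset$ by writing down one solution, for instance $h(x)=\max(x-3,\,3-x)$ and $g=\max\!\bigl(-2x+\tfrac92,\ \tfrac15x+\tfrac1{10},\ 2x-\tfrac{71}{10}\bigr)$, for which $g-h$ equals $\tfrac12,-\tfrac12,\tfrac7{10},-\tfrac1{10},\tfrac9{10}$ at $\p_1,\dots,\p_5$. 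Here $g-h$ has exactly $2+1=3$ breakpoints, hence exactly four linear pieces, so realizing four sign changes is only just possible; this tightness is what the argument exploits.

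Next I would define a combinatorial invariant of the maximal cones of $\pclassificationfan$. On such a cone every data point has degree one (\Cref{prop:graphs-max-cones}), so each positive point $\p_1,\p_3,\p_5$ activates a unique term of $g$; let $(t_1,t_3,t_5)\in[n]^3$ be the corresponding labels. The structural input is that, since $g-h$ must change sign four times on $[\p_1,\p_5]$ but has at most four linear pieces, $g$ is forced to spend all of its slope variation on the data: in \emph{every} feasible parameter, $\p_1$ lies on the piece of $g$ of smallest slope, $\p_3$ on the middle one, and $\p_5$ on the largest --- otherwise $g-h$, restricted to $[\p_1,\p_5]$ or to an initial or terminal subinterval, has too few breakpoints, and a concave (or merely three-piece) function cannot be $+,-,+$ at three ordered points. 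Hence $(t_1,t_3,t_5)$ records the three terms of $g$ sorted by slope, a well-defined function on the maximal cones of $\pclassificationfan$.

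Then I would show $(t_1,t_3,t_5)$ is constant on each strongly connected component. A codimension-$1$ wall of $\pclassificationfan$ toggles the activation of one data point; for a positive point this toggle stays among terms of $g$, since a cone in which a positive point activates only terms of $h$ is not weakly compatible with $\target$, hence not in $\pclassificationfan$. Moreover, along the genuine short path through the wall the three terms of $g$ keep pairwise distinct slopes (two equal slopes would leave $g$ with at most two effective pieces and $g-h$ with at most three, too few for four sign changes), so their slope order, and thus $(t_1,t_3,t_5)$, is unchanged. Finally, the symmetry axiom \ref{axiom-ap:symmetry} lets me relabel the three terms of $g$ in the explicit solution by the transposition of its two smallest-slope terms, producing a second maximal cone of $\pclassificationfan$ with invariant $(2,1,3)$ instead of $(1,2,3)$. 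The two cones lie in different strongly connected components, so $\pclassificationfan$ is not strongly connected.

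The step I expect to be the main obstacle is the invariance under wall-crossings, i.e.\ ruling out that one can ``sneak around'' inside $\pclassificationfan$ and change $(t_1,t_3,t_5)$. This needs (i) the piece-counting argument pinning the positive points to the correctly slope-ordered pieces of $g$ for all feasible parameters, (ii) the fact that feasibility forbids two terms of $g$ from sharing a slope, and (iii) care with the closed-versus-open distinction and with lower-dimensional cones so that the path-through-a-wall reasoning is valid. The reduction to $\tspace$, nonemptiness, and the relabelling trick are routine.
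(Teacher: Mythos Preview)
Your approach is correct and the mechanism you identify---disconnection forced by the permutation symmetry of the numerator labels---is exactly the one the paper uses. But the two proofs package it differently.

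The paper takes a smaller example: four collinear points in $\R^2$, $n=m=2$, target $(+,-,-,+)$. It then simply enumerates the eight maximal cones of $\pclassificationfan$, displays their activation patterns, and observes that they fall into two groups related by swapping $i_1 \leftrightarrow i_2$, with only lineality-space intersection between the groups. There is no invariant and no piece-counting; it is a direct verification.

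Your route is more conceptual. You introduce the slope-order permutation of the terms of $g$ as a locally constant function on $\tspace$ (because all three slopes must stay distinct---otherwise $g\oslash h$ has too few pieces to realize four sign changes), and then read off $(t_1,t_3,t_5)$ from it. This is a genuine argument rather than an enumeration, and it would in fact apply verbatim to the paper's example: there one only needs the two slopes of $g$ distinct, which follows since an affine $g$ makes $g-h$ concave and a concave function positive at $\p_1,\p_4$ cannot be negative at $\p_2$. So your method handles the paper's example with less work; conversely, your five-point, $n=3$, $m=2$ instance is more elaborate than necessary for the theorem.

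A few small things to tighten when you write it out. Your explicit solution is degenerate (at $x=2$ and $x=4$ two terms of $g$ tie), so perturb it to land in the interior of a maximal cone. The cleanest way to phrase the invariance is the one you gesture at: show that on all of $\tspace$ the three slopes of $g$ are pairwise distinct, so the map $\theta \mapsto \text{(permutation sorting }s_1,s_2,s_3)$ is continuous into a discrete set, hence constant on connected components of $\tspace$; then invoke the stated bijection between connected components of $\tspace$ and strongly connected components of $\pclassificationfan$. This sidesteps the wall-by-wall case analysis. The piece-counting step pinning $\p_1,\p_3,\p_5$ to the slope-ordered pieces (your item (i)) does go through as you say: if $\p_3$ were not on the middle piece, then on $[1,3]$ or $[3,5]$ the function $g-h$ would be affine minus a two-piece max, hence concave, contradicting the $+,-,+$ pattern there.
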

\begin{proof}
    Consider the data set $\data = \{\p_1,\p_2,\p_3,\p_4\}$ with $\p_1 = (0,0), \p_2 = (1,1), \p_3 = (2,2), \p_4 = (3,3)$, and the target dichotomy is $\target = (+,-,-,+)$. The perfect classification fan $\Sigma_\data^0(2,2) \subset \pspace[2,2,2]$ consists of $8$ cones of dimension $12$ (the maximal dimension), which are divided into two strongly connected components:
    The intersection of any cone from the first connected component and any cone from the second component is solely contained in the lineality space of the fan.
    The activation patterns of the $8$ cones are shown in \Cref{fig:classification-disconneted}. 
    {In this example we can identify two fundamentally different types of cones, assigning $\p_2,\p_3$ either to the same negative index or to two separate negative indices. 
    Then we have discrete symmetries of the parametrization via permutation of the positive indices $i_1,i_2$ and permutation of the negative indices $j_1,j_2$. 
    The permutation of $j_1,j_2$ does not strongly disconnect the set, but the permutation of $i_1,i_2$ does.}
    \begin{figure}
        \centering
        \begin{subfigure}[b]{0.34\textwidth}
            \centering
            \begin{tikzpicture}
    [
    scale=1,
    v/.style={inner sep=1.2pt,circle,draw=black,fill=black,thick},
    ]
    \node[v,label=right:{$\p_1$}] at (0,0) {};
    \node[v,label=above:{$\p_2$}] at (1,1) {};
    \node[v,label=right:{$\p_3$}] at (2,2) {};
    \node[v,label=right:{$\p_4$}] at (3,3) {};
    \draw[thick] (-0.5,1.2) -- (1.3,0.7) -- (1.7,-.5);
    \draw[thick] (1.3,0.7) -- (1.8,2.4);
    \draw[thick] (1,3) -- (1.8,2.4) -- (3.5,2.7);
    \node at (-0.1,0.6) {$i_1$};
    \node at (0.8,2.2) {$j_1$};
    \node at (2.2,0.7) {$j_2$};
    \node at (1.8,2.9) {$i_2$};
    \node at (0,-1) {};
\end{tikzpicture}
        
            \caption{The point configuration.}
        \end{subfigure}
        \begin{subfigure}[b]{0.65\textwidth}
            \centering
            \begin{tikzpicture}
    [
    scale=0.5,
    p/.style={inner sep=1.2pt,circle,draw=black,fill=black,thick,label distance=-1.5mm},
    ]
    \node[p,label=left:{$\p_1$}] (p1) at (0,3) {};
    \node[p,label=left:{$\p_2$}] (p2) at (0,2) {};
    \node[p,label=left:{$\p_3$}] (p3) at (0,1) {};
    \node[p,label=left:{$\p_4$}] (p4) at (0,0) {};
    \node[p,label=right:{$i_1$}] (i1) at (2,3) {};
    \node[p,label=right:{$i_2$}] (i2) at (2,2) {};
    \node[p,label=right:{$j_1$}] (j1) at (2,1) {};
    \node[p,label=right:{$j_2$}] (j2) at (2,0) {};
    \draw[thick] (p1) -- (i1);
    \draw[thick] (p2) -- (j1);
    \draw[thick] (p3) -- (j2);
    \draw[thick] (p4) -- (i2);
\end{tikzpicture}
\begin{tikzpicture}
    [
    scale=0.5,
    p/.style={inner sep=1.2pt,circle,draw=black,fill=black,thick,label distance=-1.5mm},
    ]
    \node[p,label=left:{$\p_1$}] (p1) at (0,3) {};
    \node[p,label=left:{$\p_2$}] (p2) at (0,2) {};
    \node[p,label=left:{$\p_3$}] (p3) at (0,1) {};
    \node[p,label=left:{$\p_4$}] (p4) at (0,0) {};
    \node[p,label=right:{$i_1$}] (i1) at (2,3) {};
    \node[p,label=right:{$i_2$}] (i2) at (2,2) {};
    \node[p,label=right:{$j_1$}] (j1) at (2,1) {};
    \node[p,label=right:{$j_2$}] (j2) at (2,0) {};
    \draw[thick] (p1) -- (i1);
    \draw[thick] (p2) -- (j1);
    \draw[thick] (p3) -- (j1);
    \draw[thick] (p4) -- (i2);
\end{tikzpicture}
\begin{tikzpicture}
    [
    scale=0.5,
    p/.style={inner sep=1.2pt,circle,draw=black,fill=black,thick,label distance=-1.5mm},
    ]
    \node[p,label=left:{$\p_1$}] (p1) at (0,3) {};
    \node[p,label=left:{$\p_2$}] (p2) at (0,2) {};
    \node[p,label=left:{$\p_3$}] (p3) at (0,1) {};
    \node[p,label=left:{$\p_4$}] (p4) at (0,0) {};
    \node[p,label=right:{$i_1$}] (i1) at (2,3) {};
    \node[p,label=right:{$i_2$}] (i2) at (2,2) {};
    \node[p,label=right:{$j_1$}] (j1) at (2,1) {};
    \node[p,label=right:{$j_2$}] (j2) at (2,0) {};
    \draw[thick] (p1) -- (i1);
    \draw[thick] (p2) -- (j2);
    \draw[thick] (p3) -- (j1);
    \draw[thick] (p4) -- (i2);
\end{tikzpicture}
\begin{tikzpicture}
    [
    scale=0.5,
    p/.style={inner sep=1.2pt,circle,draw=black,fill=black,thick,label distance=-1.5mm},
    ]
    \node[p,label=left:{$\p_1$}] (p1) at (0,3) {};
    \node[p,label=left:{$\p_2$}] (p2) at (0,2) {};
    \node[p,label=left:{$\p_3$}] (p3) at (0,1) {};
    \node[p,label=left:{$\p_4$}] (p4) at (0,0) {};
    \node[p,label=right:{$i_1$}] (i1) at (2,3) {};
    \node[p,label=right:{$i_2$}] (i2) at (2,2) {};
    \node[p,label=right:{$j_1$}] (j1) at (2,1) {};
    \node[p,label=right:{$j_2$}] (j2) at (2,0) {};
    \draw[thick] (p1) -- (i1);
    \draw[thick] (p2) -- (j2);
    \draw[thick] (p3) -- (j2);
    \draw[thick] (p4) -- (i2);
\end{tikzpicture}

\vspace{0.2em}
\hrule
\vspace{0.2em}

\begin{tikzpicture}
    [
    scale=0.5,
    p/.style={inner sep=1.2pt,circle,draw=black,fill=black,thick,label distance=-1.5mm},
    ]
    \node[p,label=left:{$\p_1$}] (p1) at (0,3) {};
    \node[p,label=left:{$\p_2$}] (p2) at (0,2) {};
    \node[p,label=left:{$\p_3$}] (p3) at (0,1) {};
    \node[p,label=left:{$\p_4$}] (p4) at (0,0) {};
    \node[p,label=right:{$i_1$}] (i1) at (2,3) {};
    \node[p,label=right:{$i_2$}] (i2) at (2,2) {};
    \node[p,label=right:{$j_1$}] (j1) at (2,1) {};
    \node[p,label=right:{$j_2$}] (j2) at (2,0) {};
    \draw[thick] (p1) -- (i2);
    \draw[thick] (p2) -- (j1);
    \draw[thick] (p3) -- (j2);
    \draw[thick] (p4) -- (i1);
\end{tikzpicture}
\begin{tikzpicture}
    [
    scale=0.5,
    p/.style={inner sep=1.2pt,circle,draw=black,fill=black,thick,label distance=-1.5mm},
    ]
    \node[p,label=left:{$\p_1$}] (p1) at (0,3) {};
    \node[p,label=left:{$\p_2$}] (p2) at (0,2) {};
    \node[p,label=left:{$\p_3$}] (p3) at (0,1) {};
    \node[p,label=left:{$\p_4$}] (p4) at (0,0) {};
    \node[p,label=right:{$i_1$}] (i1) at (2,3) {};
    \node[p,label=right:{$i_2$}] (i2) at (2,2) {};
    \node[p,label=right:{$j_1$}] (j1) at (2,1) {};
    \node[p,label=right:{$j_2$}] (j2) at (2,0) {};
    \draw[thick] (p1) -- (i2);
    \draw[thick] (p2) -- (j1);
    \draw[thick] (p3) -- (j1);
    \draw[thick] (p4) -- (i1);
\end{tikzpicture}
\begin{tikzpicture}
    [
    scale=0.5,
    p/.style={inner sep=1.2pt,circle,draw=black,fill=black,thick,label distance=-1.5mm},
    ]
    \node[p,label=left:{$\p_1$}] (p1) at (0,3) {};
    \node[p,label=left:{$\p_2$}] (p2) at (0,2) {};
    \node[p,label=left:{$\p_3$}] (p3) at (0,1) {};
    \node[p,label=left:{$\p_4$}] (p4) at (0,0) {};
    \node[p,label=right:{$i_1$}] (i1) at (2,3) {};
    \node[p,label=right:{$i_2$}] (i2) at (2,2) {};
    \node[p,label=right:{$j_1$}] (j1) at (2,1) {};
    \node[p,label=right:{$j_2$}] (j2) at (2,0) {};
    \draw[thick] (p1) -- (i2);
    \draw[thick] (p2) -- (j2);
    \draw[thick] (p3) -- (j1);
    \draw[thick] (p4) -- (i1);
\end{tikzpicture}
\begin{tikzpicture}
    [
    scale=0.5,
    p/.style={inner sep=1.2pt,circle,draw=black,fill=black,thick,label distance=-1.5mm},
    ]
    \node[p,label=left:{$\p_1$}] (p1) at (0,3) {};
    \node[p,label=left:{$\p_2$}] (p2) at (0,2) {};
    \node[p,label=left:{$\p_3$}] (p3) at (0,1) {};
    \node[p,label=left:{$\p_4$}] (p4) at (0,0) {};
    \node[p,label=right:{$i_1$}] (i1) at (2,3) {};
    \node[p,label=right:{$i_2$}] (i2) at (2,2) {};
    \node[p,label=right:{$j_1$}] (j1) at (2,1) {};
    \node[p,label=right:{$j_2$}] (j2) at (2,0) {};
    \draw[thick] (p1) -- (i2);
    \draw[thick] (p2) -- (j2);
    \draw[thick] (p3) -- (j2);
    \draw[thick] (p4) -- (i1);
\end{tikzpicture}
            \caption{Activation patterns of cones in the two connected components.}
        \end{subfigure}
    \caption{The $12$ cones of correct classification from the 
    example in the proof of  \Cref{ex:classification-disconneted}, grouped in strongly connected components.}
    \label{fig:classification-disconneted}
    \end{figure}
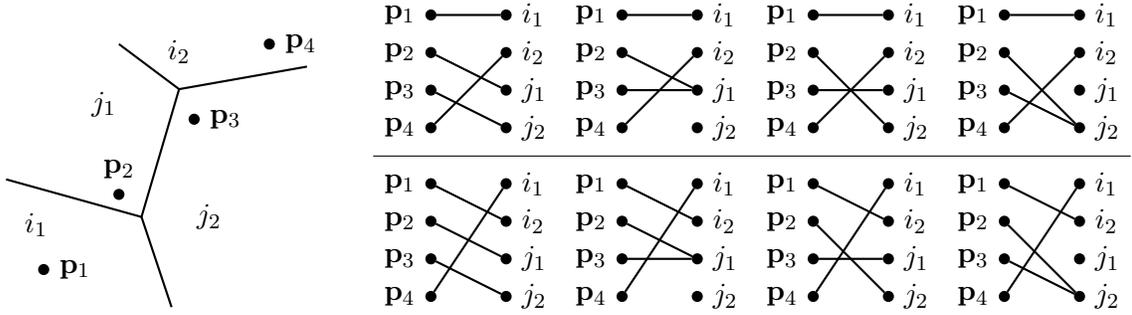
\end{proof}

In \Cref{thm:nr-cones-fan} we have seen that the trivial upper bound for the number of maximal cones in the activation fan is attained if the data points are affinely independent. We now obtain an analogous result for the classification fan, however, the following statement does not require the entire data set to be affinely independent.

\begin{theorem}
    Let $n,m \geq 2$ and fix a target dichotomy $\target$. Then the perfect classification fan $\pclassificationfan$ has at most $n^{|\dplus|} m^{|\dminus|}$ cones of maximal dimension. This bound is attained if and only if $\dplus,\dminus$ are linearly separable and both $\dplus$ and $\dminus$ are affinely independent sets.
\end{theorem}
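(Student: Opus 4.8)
The plan is to identify the full-dimensional cones of $\pclassificationfan$ with a purely combinatorial set of ``assignments'', prove the upper bound by counting these, and then characterize exactly when \emph{every} assignment is realized. By \Cref{prop:graphs-max-cones} a full-dimensional activation cone $\acone$ has $\deg(\p)=1$ for every $\p\in\data$, so its activation pattern is the graph of a function $\ell\colon\data\to[n]\sqcup[m]$ with $N(\p;G)=\{\ell(\p)\}$; the cone lies in $\pclassificationfan$ precisely when $\ell(\p)\in[n]$ for $\p\in\dplus$ and $\ell(\p)\in[m]$ for $\p\in\dminus$ (weak compatibility coincides with compatibility in the degree-$1$ case). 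Since $\acone$ is determined by $G$ and $G$ by $\ell$, distinct maximal cones yield distinct such $\ell$, of which there are exactly $n^{|\dplus|}m^{|\dminus|}$. Thus the bound holds, and it is attained if and only if every admissible $\ell$ arises as the activation pattern of some cone.

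For the \emph{sufficiency} direction I would argue by explicit construction, much as in the proof of \Cref{thm:nr-cones-fan}. Assume $\dplus$ and $\dminus$ are each affinely independent and pick, by linear separability, an affine $\phi$ with $\phi\ge 1$ on $\dplus$ and $\phi\le -1$ on $\dminus$. Given an admissible $\ell$, affine independence of $\dplus$ lets me choose for each $i\in[n]$ an affine $\gamma_i$ vanishing on $\ell^{-1}(i)\cap\dplus$ and equal to $-1$ on $\dplus\setminus\ell^{-1}(i)$, and symmetrically affine functions $\delta_j$, $j\in[m]$, on $\dminus$. Setting the $i$-th numerator term to $\gamma_i+C\phi$ and the $j$-th denominator term to $\delta_j-C\phi$ for a sufficiently large constant $C$, one checks that on $\p\in\dplus$ the value $g_\parameter(\p)=C\phi(\p)$ is attained uniquely at the numerator term $\ell(\p)$ and strictly exceeds every denominator term (because the $\delta_j$ are bounded on the finite set $\data$), and symmetrically on $\dminus$; hence the activation pattern of this tropical rational function is exactly $\ell$, and the corresponding cone is full-dimensional.

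For \emph{necessity}, suppose the bound is attained, so every admissible $\ell$ is realized by a parameter in the interior of a maximal cone, where $\deg\equiv 1$, i.e.\ the maximum defining $f_\parameter(\p)$ is attained at the single index $\ell(\p)$. Applying this to the admissible $\ell$ sending all of $\dplus$ to one index of $[n]$ and all of $\dminus$ to one index of $[m]$ yields affine functions $A$ (the active numerator term) and $B$ (the active denominator term) with $A>B$ on $\dplus$ and $A<B$ on $\dminus$, so $A-B$ is an affine function separating $\dplus$ from $\dminus$. If instead $\dplus$ were affinely dependent, fix a vanishing affine dependence $\sum_q\mu_q q=0$, $\sum_q\mu_q=0$ with supports $C^+=\{q:\mu_q>0\}$ and $C^-=\{q:\mu_q<0\}$ (both nonempty, and now using $n\ge 2$), and take the admissible $\ell$ with $\ell\equiv 1$ on $C^+$ and $\ell\equiv 2$ on $C^-$; realizability gives $a_1+\langle\s_1,q\rangle>a_2+\langle\s_2,q\rangle$ for $q\in C^+$ and the reverse strict inequality for $q\in C^-$, and weighting these by $\mu_q$ and summing yields $0>0$. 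The same argument applied to $\dminus$ (using $m\ge 2$) shows $\dminus$ is affinely independent.

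\textbf{Main obstacle.} I expect the load-bearing step to be the sufficiency construction: one must verify that the large summand $\pm C\phi$ simultaneously pushes $g_\parameter$ above $h_\parameter$ on $\dplus$ and below it on $\dminus$ while leaving the orderings \emph{within} the numerator (and within the denominator) dictated by the $\gamma_i$ (respectively $\delta_j$), so that the realized activation pattern is precisely the prescribed $\ell$ and not merely weakly compatible with $\target$; this is also the only point where affine independence of $\dplus$ and $\dminus$ enters constructively rather than as an obstruction.
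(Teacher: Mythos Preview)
Your proposal is correct and follows essentially the same route as the paper: the upper bound via \Cref{prop:graphs-max-cones}, sufficiency by an explicit construction combining a separating affine functional with affine interpolants on $\dplus$ and $\dminus$, and necessity by showing that certain admissible assignments are unrealizable. Your sufficiency construction (numerator terms $\gamma_i+C\phi$, denominator terms $\delta_j-C\phi$) is exactly the paper's construction with the scaling absorbed into a single constant; your ``main obstacle'' is resolved just as in the paper, since the $\pm C\phi$ summand is common to all numerator (resp.\ denominator) terms and hence does not affect the internal argmax.

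The one genuine stylistic difference is in the necessity direction. The paper derives the obstructions geometrically by invoking \Cref{prop:geometry-of-data}\,\ref{item:convexity-max}: if $\conv(\dplus)\cap\conv(\dminus)\neq\emptyset$, or if an affine dependence inside $\dplus$ (resp.\ $\dminus$) produces two subsets with intersecting convex hulls, then the corresponding degree-one activation pattern cannot occur. You instead argue algebraically and self-containedly: realizability of the constant assignment yields a separating affine function directly, and realizability of a two-valued assignment on the supports $C^+,C^-$ of an affine dependence gives strict inequalities that sum to $0>0$. Both arguments are equivalent (your weighted-sum contradiction is precisely the Farkas/convexity obstruction unpacked), but yours has the small advantage of not needing to cite the convexity lemma, while the paper's version makes the geometric picture more visible. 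One cosmetic point: you should say explicitly that $\ell$ is extended admissibly to the rest of $\data$ in the affine-dependence step, though this is clearly implicit.
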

\begin{proof}
    Since $\pclassificationfan$ is a subfan of the activation fan $\activationfan[n+m]$, \Cref{prop:graphs-max-cones} implies that the cones of maximal dimension are labeled by activation patterns $G$ such that $N(G;\p) = \{i(\p)\} \subset [n]$ for $\p \in \dplus$ and $N(G,\p) = \{j(\p)\} \subset [m]$ for $\p \in \dminus$. Thus, the number of maximal cones is at most $n^{|\dplus|} m^{|\dminus|}$. Any affine dependency within $\dplus$ or $\dminus$ forbids at least one such graph. This can be proven by following the argument in the proof of \Cref{thm:nr-cones-fan}, namely that if $\dplus,\dminus$ are not linearly separable, then 
    their convex hulls intersect, and \Cref{prop:geometry-of-data} \ref{item:convexity-max} implies the existence of a forbidden graph. 
    Thus, in the bound is not attained.
    We now show that the bound can be attained for affinely independent sets $\dplus$ and $\dminus$, which are linearly separable. Let $G$ be a bipartite graph in which every point in $\data$ is incident to exactly one edge, connecting every point in $\dplus$ with a node in $[n]$ and every point in $\dminus$ with a node in $[m]$. Let $l(\x) = c + \inner{\mathbf{u},\x}$ be the equation defining a separating hyperplane, i.e. $l(\p^+) > 0$ and $l(\p^-)<0$ for all $\p^+ \in \dplus, \p^- \in \dminus$. Since the points in $\dplus$ are affinely independent, there exists an affine function $f_i(\x) = d_i + \inner{\bv_i,\x}$ for every $i \in [n]$, such that $f_i(\p^+) = 0$ for all $\p^+ \in N(i;G)$ and $f_i(\p^+) < 0$ for all $\p^+ \in \dplus \setminus N(i;G)$. 
    Similarly, there exists an affine function $f_j(\x) = e_j + \inner{\w_j,\x}$ for every $j \in [m]$ such that $f_j(\p^-) = 0$ for all $\p^- \in N(j;G)$ and $f_j(\p^-) < 0$ for all $\p^- \in \dminus \setminus N(j;G)$. Since $l(\p^-) < 0$ for all $\p^- \in \dminus$ we can choose a scalar $\lambda >0$ such that for all $\p^- \in \dminus$ holds $-\lambda l(\p^-) >  \max_{i \in [n]} f_i(\p^-)$. Similarly, since $l(\p^+) > 0$ for all $\p^+ \in \dplus$ we can choose $\mu >0$ such that for all $\p^+ \in \dplus$ holds $\mu l(\p^+) > \max_{j \in [m]} f_j(\p^+)$.
    For $i^* \in [n], \p^+ \in N(i^*; G)$ and $j^* \in [m], \p^- \in N(j^*; G)$ we obtain
    \begin{align*}
        \max_{i \in [n]} (\lambda l(\p^+) + f_i(\p^+)) &= \phantom{+}\lambda l(\p^+) + f_{i^*}(\p^+) = \lambda l(\p^+) > 0 \\
        \max_{j \in [m]} (- \mu l(\p^+) + f_j(\p^+)) &= -\mu l(\p^+) + \max_{j \in [m]} (f_{j}(\p^+)) < 0 \\
        \max_{i \in [n]} (\lambda l(\p^-) + f_i(\p^-)) &= \phantom{+}\lambda l(\p^-) + \max_{i \in [n]} (f_{i}(\p^+)) < 0 \\
        \max_{j \in [m]} (- \mu l(\p^-) + f_j(\p^-)) &= -\mu l(\p^-) + f_{j^*}(\p^+)  = - \mu l(\p^-) > 0  
    \end{align*}
    We can thus choose
    $
        a_i = \lambda c + d_i,  \
        \s_i = \lambda u + \bv_i, \
        b_j = -\mu c + e_j, \
        \bt_j = -\mu u + \w_j.
    $
    This defines a vector of parameters with activation pattern $G$.
\end{proof}

We close this section on perfect classification with an observation concerning tropical semialgebraic sets. 

\begin{remark}[$\pclassificationfan$ is a tropical semialgebraic set]
    The set $\pclassificationfan$ is the set of solutions to a set of tropical polynomial inequalities. 
    Concretely, it is given by the tropical polynomial inequalities
    $\bigoplus_{i \in [n]}  a_i \odot \s_i^{\odot \p} \geq \bigoplus_{j \in [m]}  b_j \odot \bt_j^{\odot \p}$ for all $\p \in \dplus$ and $\bigoplus_{i \in [n]}  a_i \odot \s_i^{\odot \p} \leq  \bigoplus_{j \in [m]}  b_j \odot \bt_j^{\odot \p}$ for all $\p \in \dminus$. 
    Tropicalizations of semialgebraic sets have close relations to positive tropicalizations (cf. \Cref{rmk:positive-tropicalization}) and their systematic study has been established in \cite{jell20_realtropicalizationanalytification}.
\end{remark}

\subsection{Sublevel Sets of the 0/1-Loss Function}\label{sec:sublevel-sets}

Recall that the $0/1$-loss function counts the number of mistakes of any function $f$ with respect to the target dichotomy $\target$, i.e.\ $\err_{\target}(\theta) = |\{i \in [M] \mid \sgn(f_\theta(\p_i)) = -\target_i\}|$. If $f_\theta = g_\theta \oslash h_\theta$ is a tropical rational function contained in a cone $\acone$, then $\err_{\target}$ is constant along $\acone$. We thus extend the function $\err_{\target}$ to cones and activation patterns. If $\acone$ is maximal, then $\err_{\target}(G)$ counts the number of edges in $G$ between $\dplus$ and $[m]$, and between $\dminus$ and $[n]$. 

\begin{definition}
    The \emph{$k^{th}$ level set} is the polyhedral fan $\classificationfan[k] = \bigcup \acone$, where the union runs over all activation patterns $G$ such that $\err_{\target}(G) = k$. We define the \emph{$k^{th}$ sublevel set}  as $\classificationfan[\leq k] = \bigcup_{k'=0}^k \classificationfan[k']$. 
\end{definition}

\begin{prop}\label{prop:symmetry}
    If $n=m$, then for any level set it holds that $\Sigma_\data^{k}(n,n) \cong  \Sigma_\data^{|D|-k}(n,n)$. 
\end{prop}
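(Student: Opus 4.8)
The plan is to realize the claimed isomorphism by an explicit linear involution of the parameter space that interchanges the numerator and the denominator. Since $n=m$, define $\iota\colon\pspace[d,n,n]\to\pspace[d,n,n]$ by swapping the two coordinate blocks,
\[
    \iota(a_1,\s_1,\dots,a_n,\s_n,b_1,\bt_1,\dots,b_n,\bt_n)=(b_1,\bt_1,\dots,b_n,\bt_n,a_1,\s_1,\dots,a_n,\s_n).
\]
This is a linear automorphism, in fact an involution, of $\pspace[d,n,n]$, and by construction $g_{\iota(\parameter)}=h_\parameter$ and $h_{\iota(\parameter)}=g_\parameter$, so $f_{\iota(\parameter)}=g_{\iota(\parameter)}\oslash h_{\iota(\parameter)}=h_\parameter\oslash g_\parameter=-\,(g_\parameter\oslash h_\parameter)=-f_\parameter$. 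First I would deduce from this that $\iota$ is an automorphism of the classification fan: if $G$ is the activation pattern of $(g_\parameter\oslash h_\parameter,\data)$ and $\sigma(G)$ is the bipartite graph obtained from $G$ by interchanging the labels of the positive nodes $[n]$ and the negative nodes $[m]$ (recall $n=m$), then $\sigma(G)$ is exactly the activation pattern of $(g_{\iota(\parameter)}\oslash h_{\iota(\parameter)},\data)$, so $\iota(\acone)=\acone[\sigma(G)]$. Hence $\iota$ permutes the cones of $\Sigma_\data(n,n)$, and being a linear bijection it preserves the face relation, so it is an automorphism of this polyhedral fan.

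Next I would track the effect of $\iota$ on the $0/1$-loss. Let $\acone$ be a full-dimensional cone; by \Cref{prop:graphs-max-cones} every $\p\in\data$ has $\deg(\p)=1$, so for $\parameter$ in the relative interior of $\acone$ one has $f_\parameter(\p)\neq 0$ for all $\p\in\data$. Since $f_{\iota(\parameter)}=-f_\parameter$, a point $\p$ is misclassified by $\iota(\parameter)$ precisely when it is correctly classified by $\parameter$, and therefore $\err_\target(\iota(\parameter))=|\data|-\err_\target(\parameter)$; equivalently $\err_\target(\sigma(G))=|\data|-\err_\target(G)$ for every activation pattern $G$ of a full-dimensional cone. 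Thus $\iota$ maps the full-dimensional cones of $\Sigma_\data^{k}(n,n)$ bijectively onto those of $\Sigma_\data^{|\data|-k}(n,n)$, and since $\iota$ is an automorphism of $\Sigma_\data(n,n)$ it carries the faces of the former subfan onto the faces of the latter, yielding the isomorphism $\Sigma_\data^{k}(n,n)\cong\Sigma_\data^{|\data|-k}(n,n)$.

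The only point requiring care is the bookkeeping for lower-dimensional cones: the identity $\err_\target(G)+\err_\target(\sigma(G))=|\data|$ may fail for a non-maximal activation pattern $G$, since a data point whose neighborhood in $G$ is not monochromatic is counted as correctly classified both by $G$ and by $\sigma(G)$, so one cannot argue cone by cone in every dimension. This is resolved by observing that the level set $\Sigma_\data^{k}(n,n)$ is the subfan generated by its full-dimensional cones: any activation pattern $G$ with $\err_\target(G)=k$ admits a full-dimensional coarsening with the same error, obtained by replacing, for each data point $\p$ whose neighborhood is non-monochromatic, this neighborhood by a single neighbor of the color prescribed by $\target_\p$ (such a neighbor exists because a non-monochromatic neighborhood contains both colors), and by shrinking monochromatic neighborhoods to a single node. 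With this reduction the argument of the previous paragraph applies, and I expect this combinatorial bookkeeping — rather than any geometric difficulty — to be the only genuinely delicate part of the proof.
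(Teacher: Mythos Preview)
Your approach is essentially the same as the paper's: define the involution $\iota$ swapping numerator and denominator parameters (the paper writes this as $\theta\mapsto\theta'$) and observe that $f_{\iota(\theta)}=-f_\theta$, so mistakes and correct classifications are interchanged. The paper's proof stops at that single observation.

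You go further and correctly flag a subtlety the paper does not address: on lower-dimensional cones a data point with non-monochromatic neighborhood satisfies $f_\theta(\p)=0$, so it is counted as correctly classified by both $\theta$ and $\iota(\theta)$, and the identity $\err_\target(G)+\err_\target(\sigma(G))=|\data|$ can fail. Your proposed fix (show the level set is generated by its full-dimensional cones via an explicit coarsening) is natural, but note a gap in the argument as written: the subgraph $H$ you construct by choosing one edge per data point need not itself be an activation pattern. Not every selection of one vertex per summand in $F=\sum_\p\conv(\bv_i(\p)\mid i\in N(\p;G))$ yields a vertex of the Minkowski sum, so the corresponding $\acone[H]$ may fail to be full-dimensional. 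You would need an additional argument---e.g.\ a perturbation of $\theta$ that breaks each tie in the direction prescribed by $\target$ without disturbing the monochromatic points---to certify that a suitable $H$ exists. That said, this is already more care than the paper's own proof takes, and the statement (and especially its corollary on $l_k=l_{|\data|-k}$) is really about maximal cones, where your argument is complete.
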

\begin{proof}
    Let $\theta \in \Sigma_\data^{k}(n,n)$ and define $\theta'$ through parameters
    \[
        a_i^{\theta'} = b_i^{\theta}, \ \s_i^{\theta'} = \bt_i^{\theta}, \ b_i^{\theta'} = a_i^{\theta}, \ \bt_i^{\theta'} = \s_i^{\theta} \text{ for } i \in [n].
    \]
    Note that the corresponding  tropical rational functions satisfy $f_{\theta}(\p) = -f_{\theta'}(\p)$. Thus, $f_{\theta}$ makes a mistake at $\p$ if and only if $f_{\theta'}$ classifies $\p$ correctly, and vice versa.
\end{proof}

\Cref{th:pclassification-polytope} implies that the level sets $\classificationfan[0],\classificationfan[1],\dots,\classificationfan[n+m]$ induce a partition of the vertices of the activation polytope $\activationpoly[n+m]$, and counting the number of vertices in each part we obtain a sequence of natural numbers. In Algebraic Combinatorics one likes to describe sequences which are symmetric, unimodal or log-concave. \Cref{prop:symmetry} allows us to obtain a statement in this spirit:

\begin{corollary}
        Let $l_k$ denote the number of maximal cones in the $k^{\text{th}}$ level set $\Sigma_\data^{k}(n,n)$. Then $l_k = l_{|D|-k}$, i.e.\ the sizes of level sets are symmetric. 
\end{corollary}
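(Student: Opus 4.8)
The plan is to obtain this statement as an immediate consequence of \Cref{prop:symmetry}, once one observes that the fan isomorphism exhibited there is realized by a linear map of the ambient parameter space. Concretely, the assignment $\theta \mapsto \theta'$ defined in the proof of \Cref{prop:symmetry} (swap $a_i \leftrightarrow b_i$ and $\s_i \leftrightarrow \bt_i$ for all $i \in [n]$) is, when $n=m$, precisely the linear involution $\iota$ of $\pspace[d,n,n]$ that exchanges the block of numerator coordinates with the block of denominator coordinates. Being a linear automorphism of the ambient vector space, $\iota$ sends polyhedral cones to polyhedral cones of the same dimension and fixes the origin, so in particular it preserves the notion of ``maximal cone''.

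Next I would verify that $\iota$ maps the classification fan to itself and keeps track of the error. Under $\iota$ an activation cone $\acone$ is carried to $\acone[G']$, where $G'$ is obtained from $G$ by swapping the labels of the positive nodes $[n]$ with those of the negative nodes $[m]=[n]$; this is well-defined exactly because $n=m$. Thus $\iota$ permutes the cones of $\Sigma_\data(n,n)$, and in particular permutes its maximal cones. Moreover, as in the proof of \Cref{prop:symmetry}, $f_{\theta'}(\p) = -f_\theta(\p)$, so $f_{\theta'}$ errs at $\p$ precisely when $f_\theta$ classifies $\p$ correctly; equivalently $\err_{\target}(G') = |\data| - \err_{\target}(G)$. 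Therefore $\iota$ restricts to a bijection between the maximal cones of $\Sigma_\data^{k}(n,n)$ and the maximal cones of $\Sigma_\data^{|\data|-k}(n,n)$, which yields $l_k = l_{|\data|-k}$.

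I do not expect any real obstacle here; the only subtlety is to make explicit that the isomorphism of \Cref{prop:symmetry} comes from a linear (indeed coordinate-permuting) map, so that the counts of maximal cones genuinely match up. Alternatively, one can argue on the activation polytope: by \Cref{th:pclassification-polytope} together with the discussion preceding this corollary, the maximal cones of $\Sigma_\data^{k}(n,n)$ correspond to those vertices of $\activationpoly[2n]$ exactly $k$ of whose Minkowski summands are negatively signed, and the global sign flip of all summands---again induced by $\iota$---is a symmetry of $\activationpoly[2n]$ sending such a vertex to one with exactly $|\data|-k$ negatively signed summands. Either route is a couple of lines; the second makes the symmetry visibly geometric.
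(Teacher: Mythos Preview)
Your proposal is correct and follows exactly the route the paper intends: the corollary is stated without proof, as an immediate consequence of \Cref{prop:symmetry}, and you have simply made explicit that the map $\theta\mapsto\theta'$ there is the coordinate-swapping linear involution of $\pspace[d,n,n]$, hence carries maximal cones bijectively to maximal cones while exchanging $\err_{\target}=k$ with $\err_{\target}=|\data|-k$. The only minor remark is that the identity $\err_{\target}(\theta')=|\data|-\err_{\target}(\theta)$ uses that on the interior of a maximal cone no data point lies on the decision boundary; this is automatic from \Cref{prop:graphs-max-cones}, so there is no gap.
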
 

Recall that \Cref{prop:chamber-walking} shows that in the case of linear classifiers, a cone of $k$ mistakes is always (strongly) connected to a cone of fewer mistakes. 
We now observe that the analogue does not necessarily hold in the more general setup of tropical rational functions. 

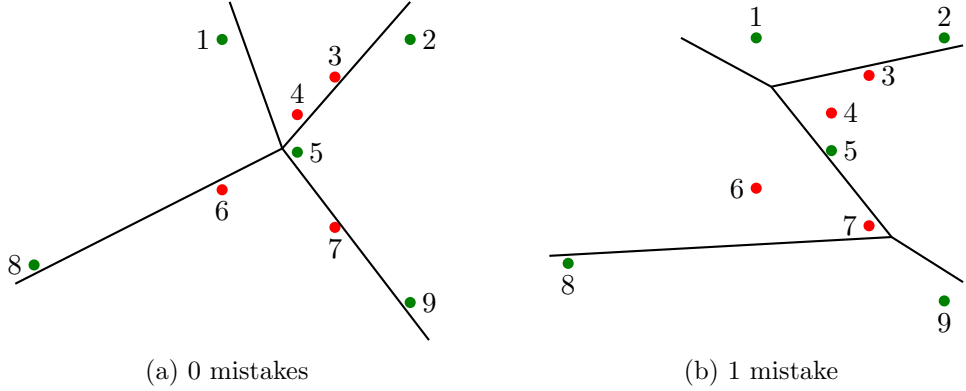
\begin{figure}
    \centering  
    \begin{subfigure}[b]{0.45\textwidth}
        \centering
        \begin{tikzpicture}
    [
	scale=.5,
	plus/.style={inner sep=1.2pt,circle,draw=DarkGreen,fill=DarkGreen,thick},
    minus/.style={inner sep=1.2pt,circle,draw=red,fill=red,thick},
    label distance=-0.5mm
	]
    \node[plus, label=left:{$1$}] at (-2,3) {};
    \node[plus, label=right:{$2$}] at (3,3) {};
    \node[plus, label=right:{$5$}] at (0,0) {};
    \node[plus, label=left:{$8$}] at (-7,-3) {};
    \node[plus, label=right:{$9$}] at (3,-4) {};
    \node[minus, label={$3$}] at (1,2) {};
    \node[minus, label={$4$}] at (0,1) {};
    \node[minus, label=below:{$6$}] at (-2,-1) {};
    \node[minus, label=below:{$7$}] at (1,-2) {};
    \draw[thick,black] (-1.8,4) -- (-.4,0.1) -- (3.5,-5);
    \draw[thick,black] (-7.5,-3.5) -- (-.4,0.1) -- (3,4);
\end{tikzpicture}
        \caption{$0$ mistakes}
    \end{subfigure}
        \begin{subfigure}[b]{0.45\textwidth}
        \centering
        \begin{tikzpicture}
    [
	scale=.5,
	plus/.style={inner sep=1.2pt,circle,draw=DarkGreen,fill=DarkGreen,thick},
    minus/.style={inner sep=1.2pt,circle,draw=red,fill=red,thick},
    label distance=-0.5mm
	]
    \node[plus, label=above:{$1$}] at (-2,3) {};
    \node[plus, label=above:{$2$}] at (3,3) {};
    \node[plus, label=right:{$5$}] at (0,0) {};
    \node[plus, label=below:{$8$}] at (-7,-3) {};
    \node[plus, label=below:{$9$}] at (3,-4) {};
    \node[minus, label=right:{$3$}] at (1,2) {};
    \node[minus, label=right:{$4$}] at (0,1) {};
    \node[minus, label=left:{$6$}] at (-2,-1) {};
    \node[minus, label=left:{$7$}] at (1,-2) {};
    \draw[thick,black] (-4,3) -- (-1.6,1.7) -- (3.5,2.8);
    \draw[thick,black] (-1.6,1.7) -- (1.6,-2.3);
    \draw[thick,black] (-7.5,-2.8) -- (1.6,-2.3) -- (3.5,-3.5);
\end{tikzpicture}
        \caption{$1$ mistake}
    \end{subfigure}
    \caption{The point configuration from \Cref{ex:level-dissconnected} and two tropical hypersurfaces classifying the points perfectly (left) and according to $G$ (right).}
    \label{fig:level-dissconnected}
\end{figure}
\begin{theorem}\label{ex:level-dissconnected}
    The sublevel sets $\classificationfan[\leq k]$ are not always strongly connected, even if the data points are in general position. More specifically, let $C \subset \classificationfan[k]$ be a strongly connected component. It may happen that all neighbors $D \in \classificationfan[k]$ of $C$ (which are adjacent through codimension $1$) satisfy $\err_{\target}(D) > k$.
\end{theorem}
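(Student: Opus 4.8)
The idea is to establish the theorem by an explicit example, namely the configuration of nine points $\data=\{\p_1,\dots,\p_9\}\subset\R^2$ drawn in \Cref{fig:level-dissconnected}, with the five positive and four negative points indicated there (and hence a target dichotomy $\target$), and with $n=m=2$. Since these points are in general position, this proves the stronger assertion. The strategy is to exhibit a maximal cone $\acone[G]$ of the level set $\classificationfan[1]$ whose strongly connected component $C$ in $\classificationfan[1]$ has the property that every maximal cone adjacent to $C$ through a codimension-$1$ wall, but not contained in $C$, has $\err_{\target}\geq 2$; since $\classificationfan[0]\neq\emptyset$ as well, no path of maximal cones through codimension-$1$ walls can leave $C$ while staying inside $\classificationfan[\leq 1]$, so the perfect cones and $C$ lie in different connected components of $\classificationfan[\leq 1]$, and $\classificationfan[\leq 1]$ is not strongly connected.

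Two ingredients can be checked directly. First, $\classificationfan[0]\neq\emptyset$: one writes down explicit parameters $\parameter^0\in\pspace$ realizing the perfect tropical hypersurface on the left of \Cref{fig:level-dissconnected}, or equivalently one verifies via \Cref{prop:geometry-of-data} that $\dplus$ can be partitioned into two convex ``positive'' regions and $\dminus$ into two convex ``negative'' regions that jointly arise from a regular subdivision, which is read off from the figure. Second, the bipartite graph $G$ on the right of \Cref{fig:level-dissconnected} is an activation pattern: give explicit parameters, observe $\deg(\p)=1$ for every $\p\in\data$, and apply \Cref{prop:graphs-max-cones} to conclude that $\acone[G]$ is a maximal cone; counting the edges of $G$ between $\dplus$ and the negative index set (and between $\dminus$ and the positive index set) gives $\err_{\target}(G)=1$, so $\acone[G]\in\classificationfan[1]$.

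The core of the argument — and the step I expect to be the main obstacle — is the combinatorial analysis of $C$. A codimension-$1$ wall between two maximal cones is an \emph{elementary flip}: on the wall exactly one data point $\p$ has degree $2$, lying on the common boundary of its two incident regions $i,i'$, and crossing the wall moves $\p$ from $i$ to $i'$; this follows from \Cref{prop:graphs-max-cones} and the normal-fan description in \Cref{th:activation-poly}, and the admissible targets $i'$ are exactly the regions adjacent to $\p$'s current one in the regular subdivision, subject to realizability constraints on the remaining eight points coming from the convexity and regularity conditions of \Cref{prop:geometry-of-data}. I would then (a) enumerate the elementary flips out of $\acone[G]$ that preserve the error value $1$ — these either move a correctly classified point to a neighbouring region of the same sign or shuttle the unique misclassified point between the two wrong-sign regions — and show, using those constraints together with the specific positions of the nine points, that the family of error-$1$ cones reachable this way is a small, explicitly enumerable fan $C$ (analogous to the finite list of activation patterns in \Cref{fig:classification-disconneted}); and (b) check for every cone of $C$ that each remaining elementary flip is either not realizable as a facet or lands in a cone with $\err_{\target}\geq 2$. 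The nine points are placed precisely so that, for every parameter in $C$, moving the misclassified point into a correctly-signed region cannot be effected by a single codimension-$1$ flip: geometrically it would force a second data point to change regions at the same time, so the combinatorial type of a perfect classifier is not adjacent to any cone of $C$. I expect this case analysis to be finite but delicate, best carried out by listing the relevant activation patterns and verifying adjacency and $\err_{\target}$ for each.

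Finally one assembles the pieces. Suppose $\acone[G]=\tau_0,\tau_1,\dots,\tau_r=\sigma_0$ were a path of maximal cones through codimension-$1$ walls inside $\classificationfan[\leq 1]$ ending at some perfect cone $\sigma_0\in\classificationfan[0]$. Let $t$ be the largest index with $\tau_t\in C$; since $\sigma_0\notin C$ (its error is $0$, while $C\subseteq\classificationfan[1]$) we have $t<r$, and $\tau_{t+1}\notin C$ is adjacent to $\tau_t\in C$ through a codimension-$1$ wall with $\err_{\target}(\tau_{t+1})\leq 1$. If $\err_{\target}(\tau_{t+1})=1$ then $\{\tau_t,\tau_{t+1}\}$ is strongly connected, contradicting maximality of the component $C$; if $\err_{\target}(\tau_{t+1})=0$ it contradicts step (b). Hence no such path exists, so the connected component of $\acone[G]$ in $\classificationfan[\leq 1]$ equals $C$ and is disjoint from $\classificationfan[0]\neq\emptyset$; therefore $\classificationfan[\leq 1]$ is not strongly connected. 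Since $\data$ is in general position and $C$ is precisely a strongly connected component of $\classificationfan[1]$ all of whose outside codimension-$1$ neighbours have error $>1$, this is exactly the ``more specifically'' statement with $k=1$, and it shows that, in contrast with the linear case of \Cref{prop:chamber-walking}, sublevel sets of the $0/1$-loss need not be strongly connected.
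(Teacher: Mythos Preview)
Your proposal is correct and follows essentially the same approach as the paper: the same nine-point configuration in $\R^2$ with $n=m=2$, the same target dichotomy, the same cone $\acone[G]$ from \Cref{fig:level-dissconnected}, and the same logical structure of showing that its strongly connected component in $\classificationfan[1]$ has no codimension-$1$ neighbour in $\classificationfan[0]$.

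The only notable difference is in the verification of the core step. The paper does not carry out the adjacency analysis by hand; it computes the fan $\Sigma_\data^{\leq 2}(2,2)$ with \texttt{SageMath} and reports the numbers: $\Sigma_\data^{0}(2,2)$ has $16$ maximal cones in $8$ strongly connected components, $\Sigma_\data^{1}(2,2)$ has $304$ maximal cones in $28$ components, and the component $C$ containing $\acone[G]$ consists of $20$ maximal cones, none of which is adjacent through codimension $1$ to any perfect cone (indeed $C$ meets $\Sigma_\data^0(2,2)$ only in the $3$-dimensional lineality space). Your hand enumeration would be chasing these same $20$ cones and their facet neighbours; this is larger than the analogous list in \Cref{fig:classification-disconneted} and, while finite and in principle doable exactly as you describe via elementary flips and the convexity/regularity obstructions of \Cref{prop:geometry-of-data}, it is tedious enough that the paper opts for machine verification rather than a written case analysis.
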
 
\begin{proof}
    We  give an example of a configuration of points in general position which can be classified perfectly, for which however there exists a cone $\acone$ with $\err_{\target}(\acone)=1$ from which there exist no (weakly) decreasing path to any cone with $0$ mistakes. 
    Let $\dplus = \{ 
        \p_1 = (-2,3), \p_2 = (3,3), \p_5 = (0,0), \p_8 = (-7,-3), \p_9 = (3,-4)
    \}$
    and $\dminus = \{
        \p_3 = (1,2), \p_4 = (0,1), \p_6 = (-2,-1), \p_7 = (1,-2)
    \}
    $, as shown in \Cref{fig:level-dissconnected}. We computed the $12$-dimensional fan $\Sigma^{\leq 2}_\data(2,2)$ using the software \texttt{SageMath} \cite{sagemath}. 
    By definition, we have $\Sigma_\data^{\leq 2}(2,2) = \Sigma_\data^{0}(2,2) \cup \Sigma_\data^{1}(2,2) \cup \Sigma_\data^{2}(2,2)$. The perfect classification fan $\Sigma_\data^{0}(2,2)$ consists of $16$ maximal cones, which divide into $8$ strongly connected components, and the first level set consists of $304$ cones, which divide into $28$ components. The connected component containing $\acone$ consists of $20$ maximal cones, but none of them is adjacent to any of the $16$ cones of $\Sigma_\data^{0}(2,2)$ through codimension $1$. More precisely, the connected component which contains $\acone$ intersects $\Sigma_\data^{1}(2,2)$ in dimension $3$, which is the dimension of the lineality space of $\Sigma_\data(2,2)$. In this sense, the connected component containing $\acone$ intersects the perfect classification fan $\Sigma^{0}(2,2)$ trivially.
\end{proof}

\begin{corollary}
    The $0/1$-loss function has local non-global minima.
\end{corollary}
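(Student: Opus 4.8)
The statement is an immediate corollary of \Cref{ex:level-dissconnected}, so the plan is simply to unwind the relevant definitions. First I would fix the combinatorial notion of local minimum that is in force here, matching the usage in \Cref{sec:linear-classification}: a maximal cone $\sigma$ of the classification fan $\classificationfan$ is a \emph{local minimum} of the $0/1$-loss function $\err_{\target}$ if every maximal cone adjacent to $\sigma$ through a codimension-$1$ wall satisfies $\err_{\target}(\cdot) \geq \err_{\target}(\sigma)$. This is precisely the notion under which \Cref{prop:linear-mistakes-connected} shows that, in the linear case, every local minimum is global; the corollary records that this fails for tropical rational classifiers.

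Next I would invoke the data set $\data \subset \R^2$ and target dichotomy $\target$ (with $n = m = 2$) produced in the proof of \Cref{ex:level-dissconnected}. Since the perfect classification fan $\pclassificationfan = \classificationfan[0]$ is nonempty there (it has $16$ maximal cones), the global minimum of $\err_{\target}$ over $\pspace[2,2,2]$ equals $0$. The same proof exhibits a maximal cone $\acone$ with $\err_{\target}(\acone) = 1$ whose strongly connected component inside $\classificationfan[1]$ meets $\classificationfan[0]$ only in the $3$-dimensional lineality space of $\classificationfan$, hence shares no codimension-$1$ wall with any cone of $\classificationfan[0]$. Because $\classificationfan[0]$ is, by definition, the union of \emph{all} cones of loss $0$, it follows that no maximal cone adjacent to $\acone$ through a codimension-$1$ wall has loss $0$; every such neighbor has loss $\geq 1 = \err_{\target}(\acone)$. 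Thus $\acone$ is a local minimum of $\err_{\target}$ that is not a global minimum, which is the assertion.

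I do not expect a genuine obstacle: the entire content lies in \Cref{ex:level-dissconnected}, and the corollary is a matter of reading off that result. The only point requiring minor care is that "adjacent through codimension $1$" must be understood modulo the lineality space of $\classificationfan$, so that cones intersecting only in the lineality space are not counted as neighbors — which is exactly the distinction already drawn in the proof of \Cref{ex:level-dissconnected}. If a more economical witness were desired one could rerun the same \texttt{SageMath} computation on a smaller configuration, but this is unnecessary for the statement as phrased.
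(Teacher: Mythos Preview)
Your proposal is correct and matches the paper's intent: the paper offers no separate proof for this corollary, treating it as immediate from the example constructed in the proof of \Cref{ex:level-dissconnected}. Your unpacking of the definitions (local minimum via codimension-$1$ adjacency, global minimum $0$ since $\pclassificationfan\neq\emptyset$, and the error-$1$ cone $\acone$ having no codimension-$1$ neighbor in $\classificationfan[0]$) is exactly the intended reading.
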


\printbibliography

\vspace*{\fill}

\subsection*{Affiliations.} 
$ $

\noindent \textsc{Marie-Charlotte Brandenburg} \\
\textsc{KTH Royal Institute of Technology } \\
\url{mcbra@kth.se} \\

\noindent \textsc{Georg Loho} \\
\textsc{University of Twente \& Freie Universit\"at Berlin} \\
\url{georg.loho@math.fu-berlin.de} \\

\noindent \textsc{Guido Montúfar} \\
\textsc{University of California, Los Angeles \& Max Planck Institute MiS} \\
\url{montufar@math.ucla.edu}

\end{document}